\newcommand{\new}{\newcommand*}\new{\rnew}{\renewcommand*}
\new{\newe}{\newenvironment*}\new{\stl}{\setlength}
\stl{\textwidth}{155mm}\stl{\textheight}{22cm}\stl{\headheight}{0cm}
\stl{\topmargin}{0cm}\stl{\oddsidemargin}{0.5cm}\stl{\evensidemargin}{0cm}
\rnew{\arraystretch}{1.2}\rnew{\baselinestretch}{1.2}
\renewcommand{\thefootnote}{\ding{73}}
\newtheorem{thm}{Theorem}[section]
\newtheorem{lem}{Lemma}[section]
\newtheorem{defn}{Definition}[section]
\newtheorem{prob}{Problem}[section]
\newcommand{\eps}{\varepsilon}
\newcommand{\fr}{\frac}
\newcommand{\pa}{\partial}
\newcommand{\arccot}{\mathrm{arccot}}
\numberwithin{equation}{section}
\new{\sect}[1]{\section{#1}\setcounter{equation}{0}
 \setcounter{thm}{0}\setcounter{lmm}{0}\setcounter{rmk}{0} }
\begin{document}

\title{ On a global supersonic-sonic patch characterized by  2-D steady full Euler equations  }

\author{
Yanbo Hu$^{a}$ and Jiequan Li$^{b,c,}$\footnote{Jiequan LI is supported by NSFC (nos. 11771054, 91852207) and Foundation of LCP.}
\\{\small \it $^a$Department of Mathematics, Hangzhou Normal University,
Hangzhou, 311121, PR China}
\\
{\small \it $^b$Institute of Applied Physics and Computational Mathematics, Beijing, 100088, PR China}\\
{\small\it $^c$Center for Applied Physics and Technology, Peking University, 100084, PR China}
}

\rnew{\thefootnote}{\fnsymbol{footnote}}

\footnotetext{ Email address: yanbo.hu@hotmail.com (Y. Hu), li\_jiequan@iapcm.ac.cn (J. Li). }

\date{}

\maketitle
\begin{abstract}

Supersonic-sonic patches are ubiquitous in regions of transonic flows and they boil down to a family of degenerate hyperbolic problems in regions  surrounded by a streamline, a characteristic curve and a possible sonic curve. This paper  establishes  the global existence of solutions in a whole supersonic-sonic patch characterized by the two-dimensional full system of steady Euler equations and studies solution behaviors near sonic curves, depending on the proper choice of boundary data extracted from the airfoil problem and related contexts. New characteristic decompositions  are developed  for the full system and a delicate local partial hodograph transformation is introduced for the solution estimates.  It is shown that the solution is uniformly $C^{1,\frac{1}{6}}$ continuous up to the sonic curve and the sonic curve is also $C^{1,\frac{1}{6}}$ continuous.
\end{abstract}

\begin{keywords}
Full Euler equations, transonic flow, sonic curve, supersonic-sonic patches, characteristic decomposition, partial hodograph transformation.
\end{keywords}

\begin{AMS}
35L65, 35L80, 76H05.
\end{AMS}

\section{Introduction}\label{S1}

Supersonic-sonic patches are ubiquitous in regions of transonic flows, just as described in the famous book  (Supersonic Flow and Shock Waves, 1948, Page 370, \cite{Courant}):  {\em Suppose the duct walls are plane except for a small inward bulge at some section. If the entrance Mach number is not much below the value one, the flow becomes supersonic in a finite region adjacent to the bulge and is again purely subsonic throughout the exit section.}  See Fig. \ref{fig1} for the illustration of a flow over an airfoil in gas dynamics.
\vspace{0.2cm}

\begin{figure}[htbp]
\begin{center}
\includegraphics[scale=0.5]{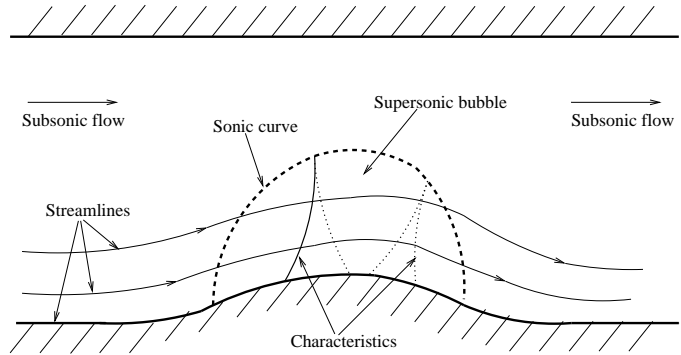}
\caption{\footnotesize Transonic phenomena in a duct.}\label{fig1}
\end{center}
\end{figure}

The existence of  solutions for such a  transonic flow problem  has been extensively studied but still remains open  in a ``global" transonic sense mathematically. See the review paper \cite{Chen-S}.   For mathematical simplicity and manipulation, many works were contributed to understand the transonic structures characterized by  the two-dimensional steady isentropic irrotational compressible Euler equations. See, e.g., \cite{Chen1, Ferrari, Guderley, Morawetz1, Morawetz2, Shiffman}.   As early as the 1950s, Morawetz \cite{Morawetz1} showed the nonexistence of smooth solutions for the transonic problem in general, but  the existence and stability of multidimensional transonic potential flows through an infinite nozzle could be established in \cite{Chen2}. From the subsonic side, Xie and Xin studied the existence of global solutions in a subsonic-sonic part of the nozzle in \cite{Xie-Xin1}, and further verified the well-posedness for the subsonic and subsonic-sonic flows with critical mass flux in \cite{Xie-Xin2}. Chen, Huang and Wang \cite{Chen3} investigated the global existence of subsonic-sonic flows for the multidimensional full Euler equations in the compensated-compactness framework.
The study of transonic shocks arising in supersonic flow past a blunt body or a bounded nozzle was presented among others in \cite{ChenS, Elling-Liu, Xin-Yin1, Xin-Yin2}.

As for supersonic-sonic parts,  there were also many contributions, e.g. in \cite{CZ, LYZ, Li-Zheng2, Sheng-You} and references therein,  adopting the characteristic decomposition method in \cite{LZZ} for the irrotational system of isentropic Euler equations.  The supersonic-sonic  patches, also named as  semi-hyperbolic patches, often appear in various contexts of the two-dimensional Riemann problem \cite{Glimm, L-Z-Y, Zheng01},  Guderley shock reflection \cite{Tesdall} and  the transonic flow described \cite{Cole-Cook, Courant}. In order to describe this type of patches,   Song and Zheng \cite{Song-Zheng}  first used  the pressure-gradient system. Then  their result was extended using  the isentropic/isothermal Euler equations \cite{Lim, Hu-Li-Sheng} and the related system \cite{Hu-Wang}. The regularity of semi-hyperbolic patch problems was discussed in \cite{Wang-Zheng} for the pressure-gradient system and in \cite{SWZ, Hu-LiT} for the isentropic Euler equations. It is worthwhile to mention  the work of Lai and Sheng \cite{Lai-Sheng} about a  centered wave bubble with sonic boundary.

\begin{figure}[htbp]
\begin{center}
\includegraphics[scale=0.6]{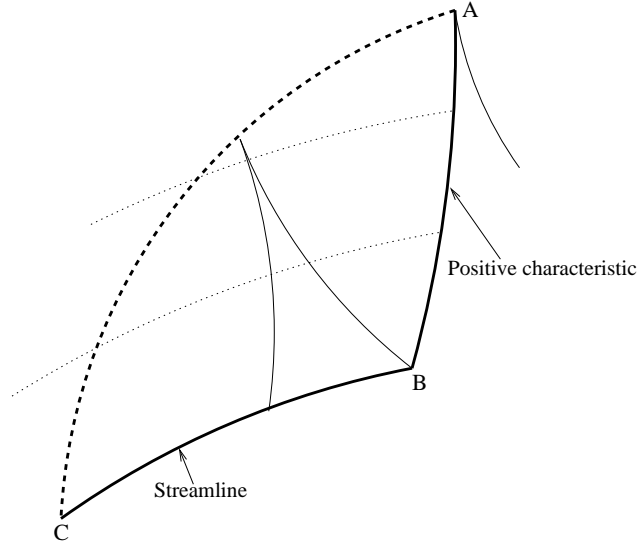}
\caption{\footnotesize The region $ABC$.}\label{fig2}
\end{center}
\end{figure}

Due to the presence of  transonic shocks possibly in the transonic flow,   the entropy in the flow is not uniform and the flow behind the shock is not irrotational \cite{Majda}. Hence  it is more suitable to adopt the full system of  Euler equations to characterize the  corresponding flows,
\begin{align}\label{1.1}
\left\{
\begin{array}{l}
  (\rho u)_x+(\rho v)_y=0, \\
  (\rho u^2+p)_x+(\rho uv)_y=0, \\
  (\rho uv)_x+(\rho v^2+p)_y=0, \\
  (\rho Eu+pu)_x+(\rho Ev+pv)_y=0,
\end{array}
\right.
\end{align}
where $\rho$, $(u, v)$, $p$ and $E$ are, respectively, the density, the velocity, the pressure and the specific total energy.  For  polytropic gases,  $E=\fr{u^2+v^2}{2}+\frac{1}{\gamma-1}\frac{p}{\rho}$,  where $\gamma>1$ is the adiabatic gas constant. The supersonic-sonic patch, as illustrated in Fig. \ref{fig2},  is extracted from Fig. \ref{fig1} near the sonic curve.  We denote the dashed arc  $\widehat{AC} $ as a sonic curve on which the sound speed $c$ is identical with the flow velocity $q=\sqrt{u^2+v^2}$, the solid segment $\widehat{BC}$ as a streamline, and the solid segment $\widehat{AB}$ as a characteristic curve.  Our supersonic-sonic patch problem is formulated as follows.

\begin{prob}\label{p1}
Given two smooth curves $\widehat{CB}$
and $\widehat{BA}$, we prescribe  the supersonic  boundary data on $\widehat{CB}$ and $\widehat{BA}$ such that $q>c$ on $\{\widehat{CB}\cup\widehat{BA}\}\setminus\{C, A\}$ and $q=c$ at the points $C$ and $A$. Moreover, $\widehat{CB}$ is a streamline and $\widehat{BA}$ is a characteristic curve. We seek a smooth solution for the system \eqref{1.1} in the sector region $ABC$ up to the sonic boundary $\widehat{AC}$.  See Fig. \ref{fig2}.
\end{prob}

As pointed out before, this  patch  can be regarded as a supersonic bubble in the transonic flow problem in Fig. \ref{fig1}. For the full Euler system \eqref{1.1}, the streamlines  may be tangent to level sets of Mach number in the region $ABC$ so as to make the problem quite complicated. In the present paper, we consider a special case of \eqref{1.1},  which still reflects the special role of entropy and vorticity, and establish the  existence of smooth solutions in the whole sector region $ABC$. Moreover, we verify that the solution is uniformly $C^{1,\fr{1}{6}}$ continuous up to the sonic curve $\widehat{CA}$ and the curve $\widehat{CA}$ is $C^{1,\fr{1}{6}}$ continuous too.  We just remind that this ``global" solution is built on  a local classical sonic-supersonic solution to system \eqref{1.1}, that was constructed in a recent work \cite{Hu-Li}.  This patch, as a ladder step we believe, is a useful part in the future construction of  complete transonic flow structures.

There are at least three novelties in this paper. First, it is a first attempt to use the full system of  Euler equations to characterize such a supersonic-sonic patch, in which the  entropy and vorticity play a special role. Second,  we need to develop new mathematical techniques to
deal with such a problem, such as new characteristic decompositions and locally partial hodograph transformation using the Mach angle and the inclination angle of streamline, which are substantially different from those in literature. For example,
 in \cite{Kuzmin} Kuz'min established
an existence theorem of a transonic perturbation problem by employing the stream-potential  coordinates. Song, Wang and Zheng \cite{SWZ} used the function $\sqrt{q^2-c^2}$ and the pseudo-potential function as the coordinate system to discuss the regularity of semi-hyperbolic patch problems characterized by the isentropic Euler equations. See \cite{ZhangT1, ZhangT2} for other applications of coordinate transformations near sonic curves. We note that the above auxiliary  coordinate systems can not be applied for the full Euler system \eqref{1.1} due to  the non-existence of potential function. Third,  as far as the theory of partial differential equations is concerned,   hyperbolic problems with some degeneracy  are highly interesting and no general theory is available.  The present paper can be regarded as  the meaningful trial beyond two-equation systems.

The rest of the paper is organized as follows. In Section \ref{S2}, we introduce a set of new dependent variables to reset  the problem in terms of  new coordinates and state the main result.  In Section \ref{S3},  we establish the global existence of smooth solutions to this supersonic-sonic patch up to the sonic curve, which is not known a priori.  In Section \ref{S4}, we provide the uniform regularity of solutions up to the sonic boundary and the regularity of sonic curve.

\section{Formulation of supersonic-sonic patch problem and main result}\label{S2}

This section serves to formulate the problem about  the supersonic-sonic patch characterized by the full system of Euler equations \eqref{1.1}. The well-posedness of the problem depends on the proper choice of boundary data.  In order to state the main result clearly, we follow \cite{Li-Zheng1} to introduce the Mach angles, the flow angles, the entropy and the Bernoulli quantity as
dependent variables to rewrite the governing equations.  Then we provide  new characteristic decompositions for later a priori estimates of the solutions inside the patch.

\subsection{Preliminary characteristic decompositions for full Euler equations}

We assume that the flow is  smooth. Then  the full Euler system \eqref{1.1} is written as
\begin{align}\label{2.1}
\textbf{A}\textbf{W}_x+\textbf{B}\textbf{W}_y=0, \ \ \ \ \ \ \ \ \ \ \ \ \ \ \ \ \ \ \ \ \ \ \ \ \ \\[3mm]
\textbf{A}=\left(
 \begin{array}{cccc}
    u & \rho & 0 & 0 \\
    0 & u & 0 & \frac{1}{\rho} \\
    0 & 0 & u & 0 \\
    0 & \gamma p & 0 & u
 \end{array}
\right),
\quad \textbf{B}=
\left(
 \begin{array}{cccc}
    v & 0 & \rho & 0 \\
    0 & v & 0 & 0 \\
    0 & 0 & v & \frac{1}{\rho} \\
    0 & 0 & \gamma p & v
 \end{array}
\right), \quad
\textbf{W}=\left(
 \begin{array}{c}
    \rho \\
    u \\
    v \\
    p
 \end{array}
\right).
\end{align}
Since we intend to investigate supersonic-sonic patches, characteristics  are very  important.  Therefore we need to define eigenvalues and the associated eigenvectors. The eigenvalues of \eqref{2.1} are
\begin{align}\label{1.2}
\Lambda_0=\Lambda_1=\frac{v}{u},\ \ \Lambda_{\pm}=\frac{uv\pm c\sqrt{q^2-c^2}}{u^2-c^2},
\end{align}
where $c=\sqrt{\gamma p/\rho}$ is the speed of sound and $q=\sqrt{u^2+v^2}$ denotes the flow speed. From the expressions of $\Lambda_\pm$, it is evident that the flow may be transonic (of mixed-type): supersonic (hyperbolic) for $q>c$, subsonic (elliptic) for $q<c$ and sonic (parabolically  degenerate) for $q=c$.
The set of points on which $c=q$ is called the {\em sonic curve.}
The four associated  (left) eigenvectors are
$$
\ell_{0}=(0,u,v,0),\quad \ell_{1}=(c^2,0,0,-1),\quad \ell_\pm=(0,-\Lambda_\pm\gamma p,\gamma p,\Lambda_\pm u-v).
$$
 Standard calculation provides  the characteristic form of \eqref{2.1},
\begin{align}\label{2.2}
\left\{
\begin{array}{l}
  uS_x+vS_y=0, \\
  uB_x+vB_y=0, \\
  -c\rho vu_x+c\rho uv_x\pm\sqrt{u^2+v^2-c^2}p_x \\
  \ \ \ \ +\Lambda_\pm(-c\rho vu_y+c\rho uv_y\pm\sqrt{u^2+v^2-c^2}p_y)=0,
\end{array}
\right.
\end{align}
where $S=p\rho^{-\gamma}$ is the entropy function and $B=\fr{u^2+v^2}{2}+\fr{c^2}{\gamma-1}$ is the Bernoulli function.

We introduce the flow angle function and the Mach angle function
\begin{align}\label{2.3}
\tan\theta=\fr{v}{u},\quad \sin\omega=\fr{c}{q}.
\end{align}
Denote
\begin{align}\label{2.4}
\alpha:=\theta+\omega,\quad \beta=\theta-\omega.
\end{align}
Obviously $\alpha$ and $\beta$ are the inclination angles of characteristics,
\begin{align}\label{2.5}
\tan\alpha=\Lambda_+,\quad \tan\beta=\Lambda_-,\quad \tan\theta=\Lambda_0=\Lambda_1.
\end{align}
Furthermore, we adopt the following normalized directional derivatives along the characteristics, as in \cite{LZZ},
\begin{align}\label{2.6}
\begin{array}{l}
\bar{\pa}^+=\cos\alpha\pa_x+\sin\alpha\pa_y,\quad
\bar{\pa}^-=\cos\beta\pa_x+\sin\beta\pa_y, \quad
\bar{\pa}^0=\cos\theta\pa_x+\sin\theta\pa_y,
\end{array}
\end{align}
or conversely,  one has
\begin{align}\label{2.7}
\pa_x=-\fr{\sin\beta\bar{\pa}^+-\sin\alpha\bar{\pa}^-}{\sin(2\omega)},\quad \pa_y=\fr{\cos\beta\bar{\pa}^+-\cos\alpha\bar{\pa}^-}{\sin(2\omega)},\quad
\bar{\pa}^0=\fr{\bar{\pa}^++\bar{\pa}^-}{2\cos\omega}.
\end{align}
Then we can write \eqref{2.2} into a new system in terms of the variables $(S, B, \theta, \omega)$
\begin{align}\label{2.8}
\left\{
\begin{array}{l}
   \bar{\pa}^0S=0, \\
   \bar{\pa}^0B=0, \\
   \bar{\pa}^+\theta+\fr{\cos^2\omega}{\sin^2\omega+\kappa}\bar{\pa}^+\omega=
\fr{\sin(2\omega)}{4\kappa}\bigg(\fr{1}{\gamma}\bar{\pa}^+\ln S-\bar{\pa}^+\ln B\bigg), \\
   \bar{\pa}^-\theta-\fr{\cos^2\omega}{\sin^2\omega+\kappa}\bar{\pa}^-\omega=-
\fr{\sin(2\omega)}{4\kappa}\bigg(\fr{1}{\gamma}\bar{\pa}^-\ln S-\bar{\pa}^-\ln B\bigg).
  \end{array}
\right.
\end{align}
Here $\kappa=(\gamma-1)/2$. We denote
$$
\Omega=\fr{1}{4\kappa}\bigg(\fr{1}{\gamma}\ln S-\ln B\bigg),
$$
and then obtain a subsystem of \eqref{2.8}
\begin{align}\label{2.9}
\left\{
\begin{array}{l}
   \bar{\pa}^0\Omega=0, \\
   \bar{\pa}^+\theta+\fr{\cos^2\omega}{\sin^2\omega+\kappa}\bar{\pa}^+\omega=\sin(2\omega)\bar{\pa}^+\Omega, \\
   \bar{\pa}^-\theta-\fr{\cos^2\omega}{\sin^2\omega+\kappa}\bar{\pa}^-\omega=-\sin(2\omega)\bar{\pa}^-\Omega.
  \end{array}
\right.
\end{align}

Now we need more interpretation for the quantity $\Omega$.
According to the commutator relation between $\bar{\pa}^0$ and $\bar{\pa}^+$ in the previous paper \cite{Hu-Li},
\begin{align}\label{2.10}
\bar{\partial}^0\bar{\partial}^+-\bar{\partial}^+\bar{\partial}^0=
\frac{\cos\omega\bar{\partial}^+\theta-
\bar{\partial}^0\alpha}{\sin\omega}\bar{\partial}^0-
\fr{\bar{\partial}^+\theta-\cos\omega\bar{\partial}^0\alpha}{\sin\omega}\bar{\partial}^+,
\end{align}
we find by \eqref{2.7} and \eqref{2.8} that, for any smooth function $I$ satisfying $\bar{\pa}^0I\equiv0$,
\begin{align}\label{2.11}
\bar{\partial}^0\bar{\partial}^+I=-\frac{\bar{\partial}^+\theta-
\cos\omega\bar{\partial}^0\alpha}{\sin\omega}\bar{\partial}^+I =\frac{(\kappa+1)\cot\omega}{\kappa+\sin^2\omega}\bar{\partial}^0\omega\bar{\partial}^+I,
\end{align}
which imply
\begin{align*}
\bar{\partial}^0\bigg(\fr{\bar{\partial}^+I}{G(\omega)}\bigg)=0,\qquad G(\omega)=\bigg(\fr{\sin^2\omega}{\kappa+\sin^2\omega}\bigg)^{\fr{\kappa+1}{2\kappa}}.
\end{align*}
Then we have
\begin{align}\label{2.12}
\bar{\partial}^0H=0,\quad H=\fr{\bar{\partial}^+\Omega}{G(\omega)}.
\end{align}
Thus we obtain a new system in terms of the variables $(H, \theta, \omega)$
\begin{align}\label{2.9a}
\left\{
\begin{array}{l}
   \bar{\pa}^0H=0, \\
   \bar{\pa}^+\theta+\fr{\cos^2\omega}{\sin^2\omega+\kappa}\bar{\pa}^+\omega=\sin(2\omega)G(\omega)H, \\
   \bar{\pa}^-\theta-\fr{\cos^2\omega}{\sin^2\omega+\kappa}\bar{\pa}^-\omega=\sin(2\omega)G(\omega)H.
  \end{array}
\right.
\end{align}
Here we have used the fact $\bar{\pa}^-\Omega=-\bar{\pa}^+\Omega$, which follows from the equation $\bar{\pa}^0\Omega=0$. In this paper, we consider the case $H\equiv H_0$ in the region $ABC$, which just needs that $H\equiv H_0$ holds on the boundary $\widehat{BA}$ by the first equation of \eqref{2.9a}, where $H_0$ is a nonnegative constant. In particular, the case $H_0=0$ corresponds to the isentropic irrotational flows.
For the above specified boundary data, we only need to consider the following system
\begin{align}\label{2.9b}
\left\{
\begin{array}{l}
   \bar{\pa}^+\theta+\fr{\cos^2\omega}{\sin^2\omega+\kappa}\bar{\pa}^+\omega=H_0\sin(2\omega)G(\omega), \\
   \bar{\pa}^-\theta-\fr{\cos^2\omega}{\sin^2\omega+\kappa}\bar{\pa}^-\omega=H_0\sin(2\omega)G(\omega).
  \end{array}
\right.
\end{align}

In order to establish a priori estimates of solutions, we derive the characteristic decompositions for the angle variables. Introduce a new variable
\begin{align}\label{2.13}
\Xi=\fr{1}{4\kappa}\ln\bigg(\fr{\sin^2\omega}{\kappa+\sin^2\omega}\bigg)-\Omega.
\end{align}
It is easy to check from the last two equations of \eqref{2.9} that
\begin{align}\label{2.14}
\left\{
    \begin{array}{l}
       \bar{\pa}^+\theta+\sin(2\omega)\bar{\pa}^+\Xi=0, \\
       \bar{\pa}^-\theta-\sin(2\omega)\bar{\pa}^-\Xi=0.
    \end{array}
\right.
\end{align}
In addition, one obtains the relations between $\bar\pa^\pm\Xi$ and $\bar\pa^\pm\omega$
\begin{align}\label{2.14a}
\bar\pa^\pm\omega=\fr{2\sin\omega(\kappa+\sin^2\omega)}{\cos\omega}[\bar\pa^\pm\Xi\pm HG(\omega)].
\end{align}
By performing a direct calculation, we acquire the characteristic decomposition for $\Xi$
\begin{align}\label{2.15}
\left\{
\begin{array}{l}
\bar{\pa}^-\bar{\pa}^+\Xi =\fr{\kappa\bar{\pa}^+\Xi+(\kappa+\sin^2\omega)GH}{\cos^2\omega}[\bar{\pa}^+\Xi -\cos(2\omega)\bar{\pa}^-\Xi]+\fr{\bar{\pa}^+\Xi}{\cos^2\omega}[\bar{\pa}^+\Xi+\cos^2(2\omega)\bar{\pa}^-\Xi], \\[6pt]
\bar{\pa}^+\bar{\pa}^-\Xi =\fr{\kappa\bar{\pa}^-\Xi-(\kappa+\sin^2\omega)GH}{\cos^2\omega}[\bar{\pa}^-\Xi -\cos(2\omega)\bar{\pa}^+\Xi]+\fr{\bar{\pa}^-\Xi}{\cos^2\omega}[\bar{\pa}^-\Xi+\cos^2(2\omega)\bar{\pa}^+\Xi].
\end{array}
\right.
\end{align}
For $\theta$, we   have
\begin{align}\label{2.16}
\left\{
\begin{array}{l}
   \bar{\partial}^-\bar{\partial}^+\theta+\bigg\{\frac{\bar{\partial}^+\beta-
\cos(2\omega)\bar{\partial}^-\alpha}{\sin(2\omega)}+
\frac{\cos(2\omega)(\kappa+\sin^2\omega)GH}{\cos^2\omega}\bigg\}\bar{\partial}^+\theta
=\frac{\cos(2\omega)(\kappa+\sin^2\omega)GH}{\cos^2\omega}\bar{\partial}^-\theta, \\[10pt]
   \bar{\partial}^+\bar{\partial}^-\theta+\bigg\{\frac{\cos(2\omega)\bar{\partial}^+\beta-
\bar{\partial}^-\alpha}{\sin(2\omega)}-
\frac{\cos(2\omega)(\kappa+\sin^2\omega)GH}{\cos^2\omega}\bigg\}\bar{\partial}^-\theta
=-\frac{\cos(2\omega)(\kappa+\sin^2\omega)GH}{\cos^2\omega}\bar{\partial}^+\theta.
  \end{array}
\right.
\end{align}
All these characteristic decompositions are necessary in the understanding of solutions in the supersonic-sonic patch, to be constructed in the following sections.

\subsection{Supersonic-sonic boundary data and statement of  main results}

We  prescribe supersonic-sonic boundary conditions for Problem \ref{p1}, mimicking the real setting of airfoil problem.
 Let $\widehat{CB}: y=\varphi(x), x\in[x_1,x_2],$ be a smooth curve. We assume that the curve $\widehat{CB}$ and the boundary values
$(H, \theta, \omega )|_{\widehat{CB}}=(\hat{H}, \hat{\theta}, \hat{\omega})(x)$ satisfy
\begin{align}\label{2.17}
\begin{array}{l}
\varphi(x)\in C^2([x_1,x_2])\cap C^3((x_1,x_2]),\  (\hat{H}, \hat{\theta}, \sin\hat{\omega})(x)\in C^1([x_1,x_2])\cap C^2((x_1,x_2]), \\
\varphi'(x)>0,\quad  \varphi''(x)<0,\quad  0<\varphi_0\leq\varphi'(x)\leq\varphi_1,\\
  \hat{H}(x)\equiv H_0\geq0,\quad \hat{\omega}(x_1)=\fr{\pi}{2}, \quad \hat{\omega}(x_2)\geq\fr{\pi}{4}, \qquad   \forall\ x\in[x_1,x_2],\\
(\sin\hat{\omega}(x))'<0, \quad \hat{\theta}(x)=\arctan\varphi'(x),
\end{array}
\end{align}
where $\varphi_0, \varphi_1$ and $H_0$ are constants. From \eqref{2.17}, we see that the curve $\widehat{CB}$ is an increasing and concave streamline, along which the flow angle $\theta$ and Mach angle $\omega$ are decreasing functions.
The assumption $\hat{\omega}(x_1)=\pi/2$ means that $C$ is a sonic point. Moreover,
we require the following inequality to be satisfied on $\widehat{CB}$
\begin{align}\label{2.19}
\bigg(\fr{\varphi''}{1+(\varphi')^2} -\fr{\cos\hat{\omega}(\sin\hat{\omega})'}{\kappa+\sin^2\hat{\omega}}\bigg)(x)<0,\quad \forall\ x\in[x_1,x_2],
\end{align}
which obviously holds when $\hat{\omega}$ is close to $\pi/2$.

Let $\widehat{BA}: x=\psi(y)\ (y\in[y_2,y_3])$ be a smooth curve satisfying $x_2=\psi(y_2)$.  We assume that the curve $\widehat{BA}$ and the boundary values
$(H, \theta, \omega )|_{\widehat{BA}}=(\tilde{H}, \tilde{\theta}, \tilde{\omega})(y)$ satisfy
\begin{align}\label{2.20}
\begin{array}{l}
\psi(y)\in C^2([y_2,y_3])\cap C^3([y_2,y_3)),\  (\tilde{H}, \tilde{\theta}, \sin\tilde{\omega})(y)\in C^1([y_2,y_3])\cap C^2([y_2,y_3)), \\
|\psi'(y)|\leq \psi_1,\  \psi''(y)<0, \ \tilde{H}(y)=H_0,\  (\tilde{\theta}, \tilde{\omega})(y_2)=(\hat\theta, \hat\omega)(x_2),\\
  \tilde{\theta}(y)+\tilde{\omega}(y)=\arccot\psi'(y), \quad  \tilde{\omega}(y_3)=\fr{\pi}{2},\quad \qquad    \forall\ y\in[y_2,y_3], \\
\tilde{\theta}'(y)+\fr{\cos\tilde\omega(\sin\tilde\omega)'}{\kappa+\sin^2\tilde\omega}(y) =\sin(2\tilde\omega)G(\tilde\omega)H_0,
\end{array}
\end{align}
where $\psi_1$ is a positive constant.
The conditions in the third of \eqref{2.20} mean that the curve $\widehat{BA}$ is a $\Lambda_+$-characteristic and the point $A$ is sonic. The last condition in \eqref{2.20} is the compatibility condition with system \eqref{2.9}.
Furthermore, we require that the functions $\psi(y)$, $\varphi(x)$, $(\hat{\theta},\hat{\omega})(x)$ and $(\tilde\theta,\tilde{\omega})(y)$ satisfy
\begin{align}\label{2.21}
H_0\sqrt{1+(\psi')^2}G(\tilde{\omega})(y)<\fr{(\sin\tilde\omega)'} {2\sin\tilde\omega(\kappa+\sin^2\tilde\omega)}(y), \quad \forall\ y\in[y_2,y_3),
\end{align}
and
\begin{align}\label{2.22}
\fr{\tilde\theta'}{\sqrt{1+(\psi')^2}}(y_2)=\cos\hat\theta\cos\hat\omega \bigg(\fr{\varphi''}{1+(\varphi')^2}-\fr{\cos\hat\omega(\sin\hat\omega)'}{\kappa+\sin^2\hat\omega}\bigg)(x_2).
\end{align}
The condition in \eqref{2.21} is to ensure that the flow angle $\theta$ is decreasing along $\widehat{BA}$. The condition \eqref{2.22} is the compatibility condition at the point $B$.

For the conditions \eqref{2.17}-\eqref{2.22}, it  looks that there are   many assumptions on the boundaries at  the first  glimpse. However, they are all reasonable if one carefully inspects the airfoil problem. In addition, in order to ensure $\tilde{H}(y)=H_0$, it is only necessary to specify the boundary data of $\tilde\Omega$ or $(\tilde{S}, \tilde{B})$ on $\widehat{BA}$ to satisfy
$$
\tilde\Omega'(y)=\fr{1}{4\kappa\gamma}\bigg(\ln\fr{\tilde{S}}{\tilde{B}^\gamma}\bigg)'(y) =H_0\sqrt{1+(\psi'(y))^2}G(\tilde{\omega}(y)), \quad \forall\ y\in[y_2,y_3].
$$

The main result of this paper can be stated as follows.
\begin{thm}\label{thm1}
Assume that the boundary conditions \eqref{2.17}-\eqref{2.22} hold. Then system \eqref{2.9a} with the boundary data $(H, \theta, \omega )|_{\widehat{CB}}=(H_0, \hat{\theta}, \hat{\omega})(x)$ and $(H, \theta, \omega )|_{\widehat{BA}}=(H_0, \tilde{\theta}, \tilde{\omega})(y)$
admits a global smooth solution $(H_0, \theta, \omega)\in C^2$ in the region $ABC$ with the sonic boundary $\widehat{AC}$. Moreover, $(\theta, \sin\omega)(x,y)$ is uniformly $C^{1,\fr{1}{6}}$ up to the sonic boundary $\widehat{AC}$ and the sonic curve $\widehat{AC}$ is $C^{1,\fr{1}{6}}$-continuous.
\end{thm}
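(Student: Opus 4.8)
The plan is to construct the solution by the method of characteristics with careful a priori bounds. I would first set up the iteration domain: since the sonic boundary $\widehat{AC}$ is not known a priori, I would work in the characteristic triangle $ABC$ coordinatized by the two characteristic parameters, using the $\Lambda_+$-characteristics emanating from $\widehat{CB}$ and the $\Lambda_-$-characteristics emanating from $\widehat{BA}$. The key unknowns are $(\theta,\omega)$ (with $H\equiv H_0$), and the governing relations are the first-order system \eqref{2.9b} together with the characteristic decompositions \eqref{2.15} for $\Xi$ and \eqref{2.16} for $\theta$. The first main step is to establish, via a bootstrap/continuity argument, uniform bounds on $\omega$ (staying in $(0,\pi/2]$, so $\cos^2\omega$ does not vanish prematurely) and on the derivatives $\bar\pa^\pm\Xi$, $\bar\pa^\pm\theta$. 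Here the sign conditions \eqref{2.17}, \eqref{2.19}, \eqref{2.20}, \eqref{2.21}, \eqref{2.22} are exactly what is needed: they force $\bar\pa^-\theta$, $\bar\pa^+\theta$ and the corresponding $\Xi$-derivatives to keep favorable signs along the boundaries, and the structure of the right-hand sides of \eqref{2.15}–\eqref{2.16} (all the dangerous terms carry a factor $\cos^{-2}\omega$ times quantities that the monotonicity makes small or sign-definite) should let the invariant-region argument propagate these signs into the interior. This yields $C^1$ bounds on $(\theta,\sin\omega)$ and shows the characteristics do not focus before reaching the sonic line, so the solution extends up to $\widehat{AC}$.

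The second step is the local partial hodograph transformation near the sonic curve, announced in the introduction, using the Mach angle $\omega$ and the streamline inclination $\theta$ as new independent variables in a neighborhood of $\widehat{AC}$. Because $\omega\to\pi/2$ degenerates the $(x,y)\leftrightarrow(\theta,\omega)$ map, I would instead use a variable like $\cos\omega$ (equivalently $\sqrt{q^2-c^2}$, which plays the role of the "parabolic" coordinate) and $\theta$, and rewrite the system for the inverse map $(x,y)=(x,y)(\theta,\omega)$. The nondegeneracy of this transformation away from the sonic line, and the precise rate at which the Jacobian vanishes as $\omega\to\pi/2$, must be read off from the first-order relations \eqref{2.14a} and the decompositions \eqref{2.15}; this is where the exponent $\tfrac{1}{6}$ enters, coming from the power $G(\omega)=(\sin^2\omega/(\kappa+\sin^2\omega))^{(\kappa+1)/(2\kappa)}$ and the quadratic vanishing of $\cos^2\omega$ near $\pi/2$, whose combination gives a Hölder exponent $1/6$ for the resulting elliptic-type estimates on the transformed equations. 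After obtaining uniform $C^{1,1/6}$ bounds for the transformed quantities, I transform back: the sonic curve $\widehat{AC}$ is the image of $\{\omega=\pi/2\}$, so its regularity and the uniform $C^{1,1/6}$ regularity of $(\theta,\sin\omega)(x,y)$ up to $\widehat{AC}$ follow from the regularity of the (now-controlled) hodograph map.

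The third step is to reconstruct $(\rho,u,v,p)$, hence verify that we have a genuine smooth solution of \eqref{1.1} in the interior: from $(\theta,\omega)$ one recovers $q$ via $\sin\omega=c/q$ together with the Bernoulli relation $B=\tfrac{q^2}{2}+\tfrac{c^2}{\gamma-1}$ (with $B$ determined on $\widehat{CB}$ along streamlines by $\bar\pa^0 B=0$, $\bar\pa^0 S=0$, using that $H\equiv H_0$ is consistent, as noted after \eqref{2.9a}), then $(u,v)=q(\cos\theta,\sin\theta)$, and $\rho,p$ from $c^2=\gamma p/\rho$ and $S=p\rho^{-\gamma}$; the characteristic form \eqref{2.2} is equivalent to \eqref{1.1} for smooth flows, so a $C^2$ solution of \eqref{2.9a} with these auxiliary quantities gives a $C^1$ (indeed $C^2$ away from the sonic line) solution of the full Euler system.

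\textbf{Main obstacle.} I expect the hardest part to be the a priori $C^1$ estimates that simultaneously (i) keep $\omega$ strictly below $\pi/2$ in the open region while controlling its approach to $\pi/2$ at $\widehat{AC}$, and (ii) prevent the $\Lambda_\pm$-characteristics from crossing before they reach the sonic line — i.e. showing the iteration domain really is the full triangle $ABC$ with a $C^{1,1/6}$ free boundary. The coupling in \eqref{2.15}–\eqref{2.16} is genuinely more involved than the two-equation isentropic irrotational case in \cite{LZZ, SWZ}, since the inhomogeneous terms $\propto H_0 G(\omega)$ must be absorbed; the conditions \eqref{2.21}–\eqref{2.22} are precisely the structural hypotheses that make this absorption possible, and carrying the bookkeeping of signs through the hodograph change of variables near $\omega=\pi/2$ — where several coefficients blow up like $\cos^{-2}\omega$ — is the delicate technical core.
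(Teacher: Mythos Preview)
Your overall architecture---sign/invariant-region estimates for $\bar\pa^\pm\Xi$ and $\bar\pa^\pm\theta$ via the decompositions \eqref{2.15}--\eqref{2.16}, extension to the full patch, then a partial hodograph transformation with coordinates $(t,z)=(\cos\omega,\theta)$ to get regularity up to $\widehat{AC}$---matches the paper's approach. Two points deserve correction, one minor and one substantive.

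\textbf{Minor.} For the global extension you propose to coordinatize by characteristic parameters. The paper instead extends across level curves $\{\omega=\text{const}\}$, after proving these are non-characteristic (this follows from $\pm\bar\pa^\pm\omega>0$). Since the $\Lambda_\pm$-characteristics become tangent to $\widehat{AC}$, the level-curve scheme is cleaner; your variant would need an extra argument that characteristic parameters remain a good chart as $\omega\to\pi/2$.

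\textbf{Substantive gap.} Your explanation of the exponent $\tfrac16$ is wrong and, if followed, would not close. The function $G(\omega)$ is bounded and positive at $\omega=\pi/2$, so it contributes nothing to the H\"older index; and the transformed system is not elliptic---it is a degenerate hyperbolic system whose eigenvalues are $\lambda_\pm=O(t^2)$ as $t\to0$. The actual mechanism in the paper is a two-step loss:
\begin{itemize}
\item[(i)] In the $(z,t)$-plane the characteristic slopes are $\sim t^2$, so two characteristics from nearby sonic points separate like $|z_1-z_2|\sim t^3$; together with uniform bounds on $\pa_-U$, $\pa_+V$ (obtained from a bootstrap on rescaled derivatives $\widetilde{R}=\overline{R}/t^2$, $\widetilde{S}=\overline{S}/t^2$ satisfying an ODE-along-characteristics system with carefully tracked $t^{-3/2}$ kernels), this yields $U,V,W\in C^{1/3}$ in $(z,t)$.
\item[(ii)] Back in $(x,y)$, only $\cos\omega\,\nabla\omega$ is bounded, not $\nabla\omega$; hence $t=\cos\omega$ is only $C^{1/2}$ in $(x,y)$. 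Composing $C^{1/3}$ with a $C^{1/2}$ change of variables gives $C^{1/6}$.
\end{itemize}
So $\tfrac16=\tfrac13\cdot\tfrac12$, and both halves require specific work you have not identified: the key technical device in (i) is to pass to the \emph{reciprocals} $\widetilde U=1/(U+H_0G)$, $\widetilde V=1/(H_0G-V)$, which are bounded by the lower bound on $\pm\bar\pa^\pm\Xi$ (itself a separate lemma), and then run a bootstrap showing $|t^\nu\widetilde R|,|t^\nu\widetilde S|\le\widetilde K$ for every $\nu\in(0,1]$. Without this, the boundedness of $W=(\bar\pa^+\Xi+\bar\pa^-\Xi)/\cos\omega$---which is exactly what makes $\nabla(\sin\omega)$ bounded and the sonic curve $C^1$---remains unproved.
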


\section{Global existence of solutions in the whole patch}\label{S3}

In this section, we use characteristic decompositions as main ingredients to show the existence result in Theorem \ref{thm1}. In particular, the a priori estimates of solutions are established thanks to the non-homogeneous characteristic decompositions \eqref{2.15} and \eqref{2.16}.

\subsection{Boundary data estimates and the local existence}

Before constructing the global solution, we need to inspect the  boundary values on $\widehat{BC}$ and $\widehat{BA}$.  Of course, the global solution is extended from the local existence result at the point $B$, which is also stated at first.
\begin{lem}\label{lem1}
With the assumptions in Theorem \ref{thm1},  we have
\begin{align}\label{3.1}
\begin{array}{ll}
 \bar\pa^0\theta<0,\ \bar\pa^0\sin\omega<0,\ \bar\pa^\pm\theta<0, \ \ \ & \mbox{on }\ \widehat{BC}; \\\
 \bar\pa^+\theta<0,\ \bar\pa^+\sin\omega>0, &\mbox{on } \ \widehat{BA}.
\end{array}
\end{align}
Hereafter the symbols of curves $\widehat{BC}$ and $\widehat{BA}$ do not contain the points $C$ and $A$, respectively.
\end{lem}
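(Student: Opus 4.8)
The plan is to translate the boundary hypotheses \eqref{2.17}--\eqref{2.22} into sign information for the normalized directional derivatives $\bar\pa^\pm$ and $\bar\pa^0$ applied to $\theta$ and $\sin\omega$, using the relations \eqref{2.6}--\eqref{2.7} that express $\bar\pa^0$ as a combination of $\bar\pa^\pm$ and, dually, express $\pa_x,\pa_y$ in terms of $\bar\pa^\pm$. First I would work on the streamline $\widehat{CB}:y=\varphi(x)$. Along it the tangential derivative is proportional to $\bar\pa^0$ because $\widehat{CB}$ is a streamline (its slope is $\tan\theta$), so $\bar\pa^0\theta$ and $\bar\pa^0\sin\omega$ are, up to the positive factor $\cos\hat\theta=(1+(\varphi')^2)^{-1/2}$, just $\tfrac{d}{dx}\hat\theta$ and $\tfrac{d}{dx}\sin\hat\omega$. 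From $\hat\theta=\arctan\varphi'$ and $\varphi''<0$ we get $\bar\pa^0\theta<0$; from $(\sin\hat\omega)'<0$ we get $\bar\pa^0\sin\omega<0$. That disposes of the first two inequalities on $\widehat{BC}$.

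Next, still on $\widehat{CB}$, I need $\bar\pa^\pm\theta<0$. Here I would use the two relations $\bar\pa^0=\tfrac{\bar\pa^++\bar\pa^-}{2\cos\omega}$ from \eqref{2.7} together with the governing equations \eqref{2.9b} (equivalently the $\theta$-parts of \eqref{2.9a}) to solve the linear system: adding the two equations of \eqref{2.9b} and using $\bar\pa^0\theta=\tfrac{1}{2\cos\omega}(\bar\pa^+\theta+\bar\pa^-\theta)$ gives a formula for $\bar\pa^+\omega-\bar\pa^-\omega$ in terms of $\bar\pa^0\theta$ and $H_0$; subtracting gives $\bar\pa^+\theta-\bar\pa^-\theta$ in terms of $\bar\pa^+\omega+\bar\pa^-\omega$, i.e.\ in terms of $\bar\pa^0\omega$. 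Combining, one expresses each of $\bar\pa^\pm\theta$ as a linear combination of $\bar\pa^0\theta$ and $\bar\pa^0\omega$ (equivalently $\bar\pa^0\sin\omega$), with coefficients that are explicit functions of $\omega$ and $H_0$. The hypothesis \eqref{2.19}, which says precisely $\tfrac{\varphi''}{1+(\varphi')^2}-\tfrac{\cos\hat\omega(\sin\hat\omega)'}{\kappa+\sin^2\hat\omega}<0$, together with $(\sin\hat\omega)'<0$ and $H_0\ge0$, is exactly the combination of sign conditions needed to make both resulting expressions negative; I would verify this by writing $\bar\pa^0\theta=\cos\hat\theta\,\hat\theta'$ and $\bar\pa^0\omega=\cos\hat\theta\,\hat\omega'$ and checking the coefficient signs. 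One technical point: \eqref{2.19} is the condition in terms of $\varphi''/(1+(\varphi')^2)$ rather than $\hat\theta'$, but since $\hat\theta=\arctan\varphi'$ one has $\hat\theta'=\varphi''/(1+(\varphi')^2)$, so the two are identical.

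For the characteristic boundary $\widehat{BA}:x=\psi(y)$, the tangential derivative is proportional to $\bar\pa^+$ because $\widehat{BA}$ is a $\Lambda_+$-characteristic (slope $\tan\alpha=\tan(\tilde\theta+\tilde\omega)=\cot\psi'$ matching \eqref{2.20}). Hence $\bar\pa^+\theta$ equals a positive multiple of $\tilde\theta'(y)$, and $\bar\pa^+\sin\omega$ equals a positive multiple of $(\sin\tilde\omega)'(y)$; the last hypothesis $\tilde\omega(y_3)=\pi/2$ with $\sin\tilde\omega$ increasing toward the sonic point is consistent with $(\sin\tilde\omega)'>0$, giving $\bar\pa^+\sin\omega>0$. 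For $\bar\pa^+\theta<0$ I would combine the compatibility identity (the last line of \eqref{2.20}), which gives $\tilde\theta'=\sin(2\tilde\omega)G(\tilde\omega)H_0-\tfrac{\cos\tilde\omega(\sin\tilde\omega)'}{\kappa+\sin^2\tilde\omega}$, with the structural inequality \eqref{2.21}; substituting the latter into the former shows $\tilde\theta'<0$, hence $\bar\pa^+\theta<0$. The sign of $(\sin\tilde\omega)'$ itself should be extracted from \eqref{2.21} (its right side is positive only if $(\sin\tilde\omega)'>0$, since the left side is $\ge0$), so I would note that first.

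The main obstacle I anticipate is the bookkeeping in step two: correctly inverting the $2\times2$ linear relation between $(\bar\pa^+\theta,\bar\pa^+\omega,\bar\pa^-\theta,\bar\pa^-\omega)$ and $(\bar\pa^0\theta,\bar\pa^0\omega)$ via \eqref{2.7} and \eqref{2.9b}, and then pinning down that the coefficient of $\bar\pa^0\omega$ (resp.\ $\bar\pa^0\theta$) in the expression for $\bar\pa^\pm\theta$ has the right sign throughout $\omega\in[\pi/4,\pi/2]$ — in particular checking that $\tfrac{\cos^2\omega}{\sin^2\omega+\kappa}>0$ stays bounded and that the $H_0$-term, which carries a definite sign, does not spoil the inequality. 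Once the algebraic identity $\bar\pa^\pm\theta = (\text{positive})\cdot\cos\hat\theta\cos\hat\omega\big(\tfrac{\varphi''}{1+(\varphi')^2}-\tfrac{\cos\hat\omega(\sin\hat\omega)'}{\kappa+\sin^2\hat\omega}\big) + (\text{nonpositive } H_0\text{-term})$ is in hand, hypothesis \eqref{2.19} closes the argument immediately, and \eqref{2.22} is just the matching of these two one-sided computations at $B$, ensuring no contradiction there.
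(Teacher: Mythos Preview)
Your approach is correct and essentially identical to the paper's: it parametrizes $\bar\pa^0$ by $\cos\hat\theta\,\tfrac{d}{dx}$ on $\widehat{BC}$ and $\bar\pa^+$ by $\sin(\tilde\theta+\tilde\omega)\,\tfrac{d}{dy}$ on $\widehat{BA}$, then combines $\bar\pa^+\theta+\bar\pa^-\theta=2\cos\omega\,\bar\pa^0\theta$ with the difference of the two equations in \eqref{2.9b} to obtain $\bar\pa^\pm\theta=\cos\omega\,\bar\pa^0\theta\mp\tfrac{\cos^2\omega}{\kappa+\sin^2\omega}\bar\pa^0\sin\omega$. One small correction to your anticipated bookkeeping: the $H_0$ right-hand sides in \eqref{2.9b} are equal, so they cancel upon subtraction and no $H_0$-term appears in the formula for $\bar\pa^\pm\theta$; consequently \eqref{2.19} is needed only for $\bar\pa^+\theta$, while $\bar\pa^-\theta<0$ follows directly because both $\bar\pa^0\theta$ and $\bar\pa^0\sin\omega$ are negative.
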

\begin{proof}
Recalling the definition of $\bar\pa^0$ in \eqref{2.6}, we obtain $\bar\pa^0=\cos\hat\theta\fr{\rm d}{{\rm d}x}$ on $\widehat{BC}$. Thus one has
\begin{align*}
\bar\pa^0\theta|_{\widehat{BC}}=\cos\hat\theta\cdot \hat\theta'(x)=\fr{\cos\hat\theta\varphi''}{1+(\varphi')^2}(x)<0,\ \
\bar\pa^0\sin\omega|_{\widehat{BC}}=\cos\hat\theta(\sin\hat\omega(x))'<0.
\end{align*}

We next analyze the values $\bar\pa^\pm\theta$ on $\widehat{BC}$. Subtracting the last two equations of \eqref{2.9b} from each other and using \eqref{2.7} yield
\begin{align}\label{3.2}
\bar\pa^+\theta-\bar\pa^-\theta+\fr{2\cos^2\omega}{\kappa+\sin^2\omega}\bar\pa^0\sin\omega=0.
\end{align}
Recalling  $\bar\pa^+\theta+\bar\pa^-\theta=2\cos\omega\bar\pa^0\theta$, we have
\begin{align}\label{3.3}
\bar\pa^\pm\theta=\cos\omega\bar\pa^0\theta\mp\fr{\cos^2\omega}{\kappa+\sin^2\omega}\bar\pa^0\sin\omega.
\end{align}
From \eqref{3.3} and \eqref{2.19}, we observe
\begin{align*}
\bar\pa^+\theta|_{\widehat{BC}}=\cos\hat\omega\cos\hat\theta\bigg(\fr{\varphi''}{1+(\varphi')^2} -\fr{\cos\hat\omega(\sin\hat\omega)'}{\kappa+\sin^2\hat\omega}\bigg)(x)<0.
\end{align*}
The fact $\bar\pa^-\theta|_{\widehat{BC}}<0$ follows  from $\bar\pa^0\theta<0$ and $\bar\pa^0\sin\omega<0$ on $\widehat{BC}$, analogously.
\vspace{0.2cm}

For the boundary values on $\widehat{BA}$, we recall the definition of $\bar\pa^+$ in \eqref{2.6} and the relation between $\alpha$ and $(\theta, \omega)$ to acquire $\bar\pa^+=\sin(\tilde\theta+\tilde\omega)\fr{\rm d}{{\rm d}y}$ on $\widehat{BA}$. Hence we find by \eqref{2.9}, \eqref{2.20} and \eqref{2.21} that
\begin{align*}
\bar\pa^+\theta|_{\widehat{BA}}&=\sin(\tilde\theta+\tilde\omega)\tilde\theta'(y) =\fr{\sin(2\tilde\omega)}{\sqrt{1+(\psi')^2}}\bigg(\tilde\Omega'-\fr{(\sin\tilde\omega)'} {2\sin\tilde\omega(\kappa+\sin^2\tilde\omega)}\bigg)(y)<0.
\end{align*}
Moreover, applying \eqref{2.21} again leads to
\begin{align*}
\bar\pa^+\sin\omega|_{\widehat{BA}}=\sin(\tilde\theta+\tilde\omega)(\sin\tilde\omega)'(y) >2H_0\sin\tilde\omega(\kappa+\sin^2\tilde\omega)G(\tilde\omega)(y)\geq0.
\end{align*}
\end{proof}

Using Lemma \ref{lem1}, we have the  the following  estimates on boundary values.
\begin{lem}\label{lem2} With the boundary values in Theorem \ref{thm1}, one has
\begin{align}\label{3.4}
\begin{array}{ll}
 \theta(B)\leq\theta\leq\theta(C),\ \omega(B)\leq\omega\leq\omega(C)=\fr{\pi}{2}, &\mbox{on }\ \widehat{BC}; \ \\
  \theta(A)\leq\theta\leq\theta(B),\ \omega(B)\leq\omega\leq\omega(A)=\fr{\pi}{2}, &\mbox{on }\ \widehat{BA},
\end{array}
\end{align}
where $\theta(C)=\arctan\varphi'(x_1), \theta(B)=\arctan\varphi'(x_2), \theta(A)=\arccot\psi'(y_3)-\fr{\pi}{2}$ and $\omega(B)=[\arccot\psi'(y_2)-\arctan\varphi'(x_2)]\geq\fr{\pi}{4}$.
\end{lem}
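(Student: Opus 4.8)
The plan is to integrate the one-sided derivative inequalities of Lemma \ref{lem1} along the two boundary arcs and then read off the endpoint values from \eqref{2.17} and \eqref{2.20}.

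First, on $\widehat{BC}:y=\varphi(x)$, $x\in[x_1,x_2]$, I would use the relation $\bar\pa^0=\cos\hat\theta\,\fr{\rm d}{{\rm d}x}$ already recorded in the proof of Lemma \ref{lem1}. Since $\varphi'>0$ gives $\hat\theta=\arctan\varphi'\in(0,\pi/2)$ and hence $\cos\hat\theta>0$, the inequalities $\bar\pa^0\theta<0$ and $\bar\pa^0\sin\omega<0$ from Lemma \ref{lem1} say precisely that $\hat\theta(x)$ and $\sin\hat\omega(x)$ are strictly decreasing on $[x_1,x_2]$. Because the Mach angle $\omega\in(0,\pi/2]$ lies in a range where $\sin$ is one-to-one and increasing, the monotonicity of $\sin\hat\omega$ transfers to $\hat\omega$, so $\hat\omega(x)$ is strictly decreasing too. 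Recalling that $C$ is the endpoint $x_1$ (with $\hat\omega(x_1)=\pi/2$) and $B$ is $x_2$, this yields $\theta(B)=\hat\theta(x_2)\le\theta\le\hat\theta(x_1)=\theta(C)$ and $\omega(B)=\hat\omega(x_2)\le\omega\le\hat\omega(x_1)=\pi/2=\omega(C)$ on $\widehat{BC}$; the identities $\theta(C)=\arctan\varphi'(x_1)$ and $\theta(B)=\arctan\varphi'(x_2)$ are just $\hat\theta=\arctan\varphi'$ at the endpoints, and $\omega(B)\ge\pi/4$ is part of \eqref{2.17}.

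Next, on $\widehat{BA}:x=\psi(y)$, $y\in[y_2,y_3]$, I would use $\bar\pa^+=\sin(\tilde\theta+\tilde\omega)\,\fr{\rm d}{{\rm d}y}=\sin(\arccot\psi')\,\fr{\rm d}{{\rm d}y}$ from the proof of Lemma \ref{lem1}, noting that $\arccot\psi'\in(0,\pi)$ makes this coefficient positive. Then $\bar\pa^+\theta<0$ and $\bar\pa^+\sin\omega>0$ give that $\tilde\theta(y)$ is strictly decreasing and $\sin\tilde\omega(y)$, hence $\tilde\omega(y)\in(0,\pi/2]$, is strictly increasing on $[y_2,y_3]$. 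Since $B$ is the endpoint $y_2$ and $A$ is $y_3$ (with $\tilde\omega(y_3)=\pi/2$), this produces $\theta(A)=\tilde\theta(y_3)\le\theta\le\tilde\theta(y_2)=\theta(B)$ and $\omega(B)=\tilde\omega(y_2)\le\omega\le\tilde\omega(y_3)=\pi/2=\omega(A)$ on $\widehat{BA}$. For the endpoint values I would combine $\tilde\theta+\tilde\omega=\arccot\psi'$ with $\tilde\omega(y_3)=\pi/2$ to get $\theta(A)=\arccot\psi'(y_3)-\pi/2$, and use the matching condition $(\tilde\theta,\tilde\omega)(y_2)=(\hat\theta,\hat\omega)(x_2)$ together with $\tilde\theta(y_2)+\tilde\omega(y_2)=\arccot\psi'(y_2)$ and $\hat\theta=\arctan\varphi'$ to get $\omega(B)=\hat\omega(x_2)=\arccot\psi'(y_2)-\arctan\varphi'(x_2)$, so the two expressions for $\omega(B)$ agree.

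I do not expect a genuine obstacle here: the argument is a direct integration of sign conditions along the boundary curves. The only point deserving a word of care is the passage from monotonicity of $\sin\omega$ to monotonicity of $\omega$, and this is harmless since the Mach angle takes values in $(0,\pi/2]$ by definition, so $\sin$ is monotone there and the value $\pi/2$ is attained exactly at the sonic endpoints $C$ and $A$.
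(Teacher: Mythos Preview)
Your proposal is correct and follows exactly the approach the paper indicates: the paper states Lemma~\ref{lem2} as an immediate consequence of Lemma~\ref{lem1} without giving further details, and your argument simply spells out those details by converting the sign conditions of Lemma~\ref{lem1} into monotonicity of $\hat\theta,\hat\omega$ on $\widehat{BC}$ and $\tilde\theta,\tilde\omega$ on $\widehat{BA}$ and evaluating at the endpoints via \eqref{2.17} and \eqref{2.20}.
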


Based on the classical existence theory in \cite{LiT, Wang} and the compatibility condition \eqref{2.22}, we have the next local existence theorem around point $B$.
\begin{thm}\label{thm2}
Assume  the boundary values  in Theorem \ref{thm1}. Then the problem \eqref{2.9a} with the boundary data $(H, \theta, \omega )|_{\widehat{CB}}=(\hat{H}, \hat{\theta}, \hat{\omega})(x)$ and $(H, \theta, \omega )|_{\widehat{BA}}=(\tilde{H}, \tilde{\theta}, \tilde{\omega})(y)$ has a smooth solution locally around point $B$.
\end{thm}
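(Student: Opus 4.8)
The plan is to recognize Theorem~\ref{thm2} as a mixed (characteristic) boundary value problem for a strictly hyperbolic $2\times2$ quasilinear system near the corner $B$, and to reduce it to a situation covered verbatim by the classical local existence theory of \cite{LiT, Wang}; the only genuine work is checking the hypotheses of that theory, the crucial one being the corner compatibility at $B$.

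\emph{Step 1: reduction to a $2\times2$ system and strict hyperbolicity near $B$.} Since $\hat H\equiv H_0$ on $\widehat{CB}$, $\tilde H\equiv H_0$ on $\widehat{BA}$, and $\bar\pa^0H=0$ by the first equation of \eqref{2.9a}, the function $H$ must equal $H_0$ throughout any solution near $B$ (each streamline in a small neighborhood of $B$ either is $\widehat{CB}$ itself or meets $\widehat{BA}$, and $H$ is transported along streamlines). Hence it suffices to solve the reduced system \eqref{2.9b} for $(\theta,\omega)$, equivalently for $(\theta,\sin\omega)$, and then set $H\equiv H_0$. By Lemma~\ref{lem2}, near $B$ the Mach angle $\omega$ stays close to $\omega(B)\in(0,\tfrac{\pi}{2})$, so $\alpha-\beta=2\omega$ is bounded away from $0$ and $\pi$: the characteristic fields $\bar\pa^+$ and $\bar\pa^-$ are genuinely transversal and \eqref{2.9b} is strictly hyperbolic on a neighborhood of $B$, so no sonic degeneracy is met locally.

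\emph{Step 2: geometric configuration of the data curves and the cited theory.} The arc $\widehat{BA}$ is, by the third line of \eqref{2.20}, tangent to the $\Lambda_+$-direction at every one of its points ($\tilde\theta+\tilde\omega=\arccot\psi'$ forces $\tan\alpha=1/\psi'$), hence it is a $\bar\pa^+$-characteristic; moreover the last line of \eqref{2.20} is precisely the transport ODE that the restriction of any solution of \eqref{2.9} to a $\bar\pa^+$-characteristic must satisfy. The arc $\widehat{CB}$ is a streamline, i.e. carries the direction $\bar\pa^0$, and since $\beta<\theta<\alpha$ it is non-characteristic for \eqref{2.9b}; on it the full Cauchy data $(\theta,\omega)=(\hat\theta,\hat\omega)$ is prescribed. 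Thus the problem around $B$ is to solve \eqref{2.9b} with Cauchy data on the non-characteristic arc $\widehat{CB}$ together with prescribed data on the characteristic arc $\widehat{BA}$ — exactly the mixed boundary value problem handled in \cite{LiT, Wang}. Concretely I would pass to characteristic coordinates (parameters along $\bar\pa^+$ and along $\bar\pa^-$), read off on $\widehat{BA}$ the single scalar boundary relation for the incoming $-$-family (well defined since the projection onto $\bar\pa^-$ is non-degenerate, again because $\omega(B)\ne0$), and run the standard contraction/iteration scheme on a sufficiently small neighborhood of $B$. The data regularity required for a $C^2$ solution is supplied by \eqref{2.17} and \eqref{2.20}: near $B$ both $\varphi$ and $\psi$ are $C^3$ and $(\hat\theta,\sin\hat\omega)$, $(\tilde\theta,\sin\tilde\omega)$ are $C^2$.

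\emph{Step 3: the main obstacle — compatibility at the corner $B$.} For a mixed problem of this type, what guarantees existence and $C^1$/$C^2$ regularity up to $B$ is the matching of the two boundary prescriptions and their derivatives at $B$. The zeroth-order condition $(\tilde\theta,\tilde\omega)(y_2)=(\hat\theta,\hat\omega)(x_2)$ is built into \eqref{2.20}. For the first-order condition I would compute $\bar\pa^+\theta$ at $B$ in two ways, as already done in the proof of Lemma~\ref{lem1}: from the $\widehat{BA}$-side, $\bar\pa^+\theta=\sin(\tilde\theta+\tilde\omega)\tilde\theta'=\tilde\theta'/\sqrt{1+(\psi')^2}$ at $y_2$; from the $\widehat{CB}$-side, via \eqref{3.3}, $\bar\pa^+\theta=\cos\hat\omega\cos\hat\theta\bigl(\tfrac{\varphi''}{1+(\varphi')^2}-\tfrac{\cos\hat\omega(\sin\hat\omega)'}{\kappa+\sin^2\hat\omega}\bigr)$ at $x_2$. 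Equating these is exactly \eqref{2.22}. The matching of $\bar\pa^+\omega$ at $B$ then follows from the first equation of \eqref{2.9b} once $\bar\pa^+\theta$ matches, and the matching of $\bar\pa^-\theta,\bar\pa^-\omega$ from \eqref{3.3} and \eqref{2.14a} together with the already known boundary derivatives; the second-order compatibility is obtained by differentiating the boundary ODEs in \eqref{2.17} and \eqref{2.20} once more and using the $C^3$/$C^2$ regularity near $B$, which is routine. With these compatibility conditions verified, the cited theorems yield a $C^2$ solution $(\theta,\omega)$ of \eqref{2.9b} in a neighborhood of $B$ inside the sector $ABC$, and adjoining $H\equiv H_0$ gives the local solution of \eqref{2.9a}; Lemma~\ref{lem1} ensures along the way that this local solution stays strictly supersonic ($\omega<\tfrac{\pi}{2}$) away from $C$ and $A$. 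The only step that is not a direct citation is this corner-compatibility bookkeeping, so it is where I expect the care to be needed.
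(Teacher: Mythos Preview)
Your proposal is correct and follows essentially the same route as the paper: both reduce the question to the classical local existence theory of \cite{LiT, Wang} for a mixed characteristic boundary value problem at the corner $B$, and both show that the required first-order compatibility condition is precisely \eqref{2.22}. The only cosmetic difference is that the paper keeps the full $3\times3$ system \eqref{2.9a} in $(H,\theta,\omega)$, writes out the coefficient matrices and left eigenvectors explicitly, and states the compatibility in the canonical form $\bigl(\tfrac{1}{\Lambda_0-\Lambda_-}\ell_-\cdot\tfrac{d}{dx}(\hat H,\hat\theta,\hat\omega)\bigr)(B)=\bigl(\tfrac{1}{\Lambda_+-\Lambda_-}\ell_-\cdot\tfrac{d}{dy}(\tilde H,\tilde\theta,\tilde\omega)\tan\alpha\bigr)(B)$ before reducing it to \eqref{2.22}, whereas you first eliminate $H$ and then match $\bar\pa^+\theta$ at $B$ directly via \eqref{3.3}; the content is identical.
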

\begin{proof}
We first write system \eqref{2.9a} in the following form
\begin{align}\label{3.5}
\mathbf{A}
\left(
\begin{array}{ccc}
H \\
\theta \\
\omega
\end{array}
\right)_x +
\mathbf{B}
\left(
\begin{array}{ccc}
H \\
\theta \\
\omega
\end{array}
\right)_y=\left(
\begin{array}{ccc}
0 \\
\fr{\sin(2\omega)G(\omega)H}{\sin\alpha} \\
\fr{\sin(2\omega)G(\omega)H}{\sin\alpha}
\end{array}
\right),
\end{align}
where the coefficient matrices $\mathbf{A}$ and $\mathbf{B}$ are
\begin{align*}
\mathbf{A}=\left(
\begin{array}{ccc}
\cos\theta &  0  & 0  \\
0 & \cot\alpha & \fr{\cot\alpha\cos^2\omega}{\kappa+\sin^2\omega} \\
0 & \cot\beta & -\fr{\cot\beta\cos^2\omega}{\kappa+\sin^2\omega}
\end{array}
\right),\ \
\mathbf{B}=\left(
\begin{array}{ccc}
\sin\theta &  0  & 0  \\
0 & 1 & \fr{\cos^2\omega}{\kappa+\sin^2\omega} \\
0 & 1 & -\fr{\cos^2\omega}{\kappa+\sin^2\omega}
\end{array}
\right).
\end{align*}
The eigenvalues of \eqref{3.5} are $\Lambda_0=\tan\theta, \Lambda_+=\tan\alpha, \Lambda_-=\tan\beta$.  The corresponding (left) eigenvectors are
\begin{align*}
\ell_0=(1,0,0),\ \ \ell_+=\bigg(0, 1,\fr{\cos^2\omega}{\kappa+\sin^2\omega}\bigg), \ \ \ell_-=\bigg(0, 1,-\fr{\cos^2\omega}{\kappa+\sin^2\omega}\bigg).
\end{align*}
According to the classical theory in \cite{LiT, Wang}, it suffices to check that the following conditions are satisfied:
\begin{align}
(\hat{H}, \hat{\theta}, \hat{\omega})(B)&=(\tilde{H}, \tilde{\theta}, \tilde{\omega})(B),\label{3.6} \\
\bigg(\fr{1}{\Lambda_0-\Lambda_-}\ell_-\cdot\fr{{\rm d}}{{\rm d}x}(\hat{H}, \hat{\theta}, \hat{\omega})\bigg)(B)&= \bigg(\fr{1}{\Lambda_+-\Lambda_-}\ell_-\cdot\fr{\rm d}{{\rm d}y}(\tilde{H}, \tilde{\theta}, \tilde{\omega})\tan\alpha\bigg)(B). \label{3.7}
\end{align}
The equality \eqref{3.6} follows from \eqref{2.17} and \eqref{2.20}.  By a direct calculation, \eqref{3.7} is equivalent to the following equality,
\begin{align*}
\fr{\tan\alpha-\tan\beta}{\tan\theta-\tan\beta}(B)\bigg(\hat\theta' -\fr{\cos^2\hat\omega\hat\omega'}{\kappa+\sin^2\hat\omega}\bigg)(B) =\tan\alpha(B)\bigg(\tilde\theta' -\fr{\cos^2\hat\omega\tilde\omega'}{\kappa+\sin^2\tilde\omega}\bigg)(B),
\end{align*}
or,
\begin{align*}
\fr{2\cos\theta\cos\omega}{\cos\alpha}(B)\bigg(\hat\theta' -\fr{\cos^2\hat\omega\hat\omega'}{\kappa+\sin^2\hat\omega}\bigg)(B) =\tan\alpha(B)\cdot2\tilde\theta'(B).
\end{align*}
Here we have applied the last condition in \eqref{2.20}. Recalling the definition of $\alpha$, we have
$$
\cos\hat{\theta}\cos\hat{\omega}\bigg(\hat\theta' -\fr{\cos^2\hat\omega\hat\omega'}{\kappa+\sin^2\hat\omega}\bigg)(B) =\fr{1}{\sqrt{1+(\psi')^2}}\tilde\theta'(B),
$$
which is true thanks to \eqref{2.22}.  The proof of Theorem \ref{thm2} is completed.
\end{proof}

\subsection{A priori estimates on the global solution}

In this subsection, we derive a priori estimates of the solutions, which serve to extend the local solution in Theorem \ref{thm2} to the global domain $ABC$. All estimates are based on the assumptions of Theorem \ref{thm1}. From now on, we just consider system \eqref{2.9b}.
For convenience of presentation, we denote $D_\eps=\{(x,y)|\ \cos\omega(x,y)>\eps\}\cap ABC$.

\begin{lem}\label{lem4}
Suppose that $(\theta, \omega)\in C^2(D_\eps)$ is a solution to system \eqref{2.9b} with the boundary data $(\theta, \omega )|_{\widehat{CB}}=(\hat{\theta}, \hat{\omega})(x)$ and $(\theta, \omega )|_{\widehat{BA}}=(\tilde{\theta}, \tilde{\omega})(y)$. Then one has, for all $(x,y)\in D_\eps$,
\begin{align}\label{3.8}
\begin{array}{l}
\bar\pa^+\theta<0,\quad \bar\pa^-\theta<0, \\
\bar\pa^+\omega>0,\quad \bar\pa^-\omega<0,\quad \bar\pa^+\Xi>0, \quad \bar\pa^-\Xi<0.
\end{array}
\end{align}
\end{lem}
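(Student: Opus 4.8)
The plan is to establish the six sign inequalities in \eqref{3.8} by a single continuation/invariant-region argument driven by the characteristic decompositions \eqref{2.15} and \eqref{2.16}. First, I would observe that by \eqref{2.14a} the signs of $\bar\pa^\pm\omega$ are determined by those of $\bar\pa^\pm\Xi$ together with the smallness hypothesis \eqref{2.21} on $H_0$: since $\sin\omega(\kappa+\sin^2\omega)/\cos\omega>0$ on $D_\eps$, once we know $\bar\pa^+\Xi>0$ we get $\bar\pa^+\omega>0$ for free, while $\bar\pa^-\omega<0$ will follow from $\bar\pa^-\Xi<0$ provided $\bar\pa^-\Xi<-H_0 G(\omega)$, which is exactly where the boundary condition \eqref{2.21} (propagated inward) must be used. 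Likewise, by \eqref{2.14} the signs of $\bar\pa^\pm\theta$ are slaved to those of $\bar\pa^\pm\Xi$ through $\bar\pa^+\theta=-\sin(2\omega)\bar\pa^+\Xi$ and $\bar\pa^-\theta=\sin(2\omega)\bar\pa^-\Xi$, and $\sin(2\omega)>0$ on $D_\eps$; so the whole lemma reduces to proving $\bar\pa^+\Xi>0$ and $\bar\pa^-\Xi<0$ throughout $D_\eps$, with the accompanying quantitative bound on $\bar\pa^-\Xi$.

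For that reduction I would set up a connectedness argument. Let $E\subset D_\eps$ be the (relatively open) set of points where all six inequalities of \eqref{3.8} hold, together with the strict bound $\bar\pa^-\Xi+H_0 G(\omega)<0$. By Lemma \ref{lem1} and the boundary assumptions \eqref{2.17}--\eqref{2.22}, the inequalities hold on $\widehat{BC}\cup\widehat{BA}$ (translating the already-established facts $\bar\pa^+\theta<0$, $\bar\pa^-\theta<0$, $\bar\pa^+\sin\omega>0$ on the boundary into statements about $\bar\pa^\pm\Xi$ via \eqref{2.14}), and the local solution of Theorem \ref{thm2} is $C^2$ near $B$, so $E$ is nonempty and contains a full neighborhood of $B$ inside $D_\eps$. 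The goal is to show $E$ is also relatively closed in $D_\eps$, hence equals $D_\eps$. Suppose a sign first degenerates at some point $P\in D_\eps$ reached along characteristics from the boundary. If $\bar\pa^+\Xi$ is the first to vanish at $P$, integrate the first equation of \eqref{2.15} along the $\Lambda_+$-characteristic through $P$: in the region where $\bar\pa^+\Xi\ge 0$ and $\bar\pa^-\Xi<0$, the right side of $\bar\pa^-\bar\pa^+\Xi = \ldots$ is a linear ODE in $\bar\pa^+\Xi$ (along the $\Lambda_-$-direction) with bounded coefficients, so by Gronwall $\bar\pa^+\Xi$ cannot reach zero from a positive boundary value in finite "time"; the nonhomogeneous term $(\kappa+\sin^2\omega)GH\,[\bar\pa^+\Xi-\cos(2\omega)\bar\pa^-\Xi]$ has the favorable sign (it is $\ge0$ there since $H=H_0\ge0$, $\bar\pa^+\Xi\ge0$, and $-\cos(2\omega)\bar\pa^-\Xi\ge0$ on $D_\eps$ where $\omega\ge\pi/4$ near the sonic line makes $\cos(2\omega)\le0$), so it only helps. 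Symmetrically, if $\bar\pa^-\Xi$ or the bound $\bar\pa^-\Xi+H_0G<0$ degenerates first at $P$, integrate the second equation of \eqref{2.15} along the $\Lambda_-$-characteristic through $P$ in the $\Lambda_+$-direction; the structure is the same linear-ODE-with-good-sign, and the bound $\bar\pa^-\Xi+H_0G<0$ is preserved because its evolution equation (obtained by combining \eqref{2.15} with $\bar\pa^0H=0$ and the $\omega$-equation) again has the Gronwall form with a term of the right sign supplied by \eqref{2.21}.

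The main obstacle I anticipate is twofold. First, one must verify that $\cos(2\omega)\le 0$ (equivalently $\omega\ge\pi/4$) actually persists throughout $D_\eps$, since this is what makes the nonhomogeneous $GH$-terms in \eqref{2.15} have the helpful sign; this should come from Lemma \ref{lem2}, which gives $\omega\ge\omega(B)\ge\pi/4$ on the boundary, combined with $\bar\pa^+\omega>0$ (pushing $\omega$ up along $\Lambda_+$) and the $\bar\pa^-\omega<0$ bound — but because $\omega$ can decrease along $\Lambda_-$, one needs to be careful that it does not dip below $\pi/4$; here one uses that $\bar\pa^-\omega<0$ is controlled and that every point of $D_\eps$ is reached by a $\Lambda_-$-characteristic emanating from $\widehat{BA}$ (or $\widehat{BC}$) where $\omega\ge\omega(B)$, and along which $\omega$ is \emph{monotone decreasing toward} that boundary, i.e.\ $\omega\ge\omega(B)\ge\pi/4$ is in fact automatic. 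Second, and more delicate, the $\Lambda_-$-characteristics through an interior point may hit $\widehat{BC}$ rather than $\widehat{BA}$, and on $\widehat{BC}$ we only directly know $\bar\pa^-\theta<0$; one has to translate this, via \eqref{3.3} and the monotonicity $(\sin\hat\omega)'<0$, into the correct sign of $\bar\pa^-\Xi$ and the bound $\bar\pa^-\Xi+H_0G<0$ on $\widehat{BC}$ as well, which is where assumptions \eqref{2.19} and \eqref{2.21} evaluated on the streamline enter. Modulo these bookkeeping points about which boundary arc a given characteristic meets, the argument is a standard bootstrapping of invariant cones built from \eqref{2.14}--\eqref{2.15}.
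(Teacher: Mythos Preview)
Your overall strategy—an invariant-region/first-failure argument driven by the characteristic decompositions—is the same as the paper's. But your execution contains two sign errors that derail the argument as written, and the paper avoids both by working with $\bar\pa^\pm\theta$ and \eqref{2.16} rather than with $\bar\pa^\pm\Xi$ and \eqref{2.15}.

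First, you misread \eqref{2.14a}. For the minus sign it says $\bar\pa^-\omega=\frac{2\sin\omega(\kappa+\sin^2\omega)}{\cos\omega}\bigl[\bar\pa^-\Xi-H G(\omega)\bigr]$, so $\bar\pa^-\omega<0$ is equivalent to $\bar\pa^-\Xi< H_0G(\omega)$, which is \emph{automatic} once $\bar\pa^-\Xi<0$ (since $H_0\ge0$, $G>0$). Your proposed extra bound $\bar\pa^-\Xi<-H_0G(\omega)$, and the entire layer about propagating \eqref{2.21} inward to maintain it, is unnecessary and based on a sign slip.

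Second, and more serious, the ``helpful'' sign you claim for the nonhomogeneous term in \eqref{2.15} is wrong. With $\omega\ge\pi/4$ one has $\cos(2\omega)\le 0$, hence $-\cos(2\omega)\ge0$; multiplying by $\bar\pa^-\Xi<0$ gives $-\cos(2\omega)\bar\pa^-\Xi\le 0$, not $\ge0$. Thus the bracket $[\bar\pa^+\Xi-\cos(2\omega)\bar\pa^-\Xi]$ is a sum of a nonnegative and a nonpositive term and has no definite sign, so your Gronwall reasoning as stated does not go through (quite apart from the fact that you have not yet bounded $\bar\pa^\pm\Xi$, so the ``bounded coefficients'' hypothesis is not available at this stage either). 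The paper sidesteps this by using the decomposition \eqref{2.16} for $\theta$: at a first failure point $P$ with $\bar\pa^+\theta=0$ one gets the single term
\[
\bar\pa^-\bar\pa^+\theta\big|_{P}=\frac{H_0(\kappa+\sin^2\omega)G(\omega)}{\cos^2\omega}\cos(2\omega)\,\bar\pa^-\theta\ \ge\ 0,
\]
and this contradicts the geometric fact that along the $\Lambda_-$-characteristic from $P$ toward the boundary (which lies in $D_P$ since $\bar\pa^-\omega<0$ there) one has $\bar\pa^+\theta<0$, forcing $\bar\pa^-\bar\pa^+\theta|_P<0$. The paper then reads off $\bar\pa^\pm\omega$ and $\bar\pa^\pm\Xi$ from \eqref{2.9b} and \eqref{2.14} at the end, rather than making them part of the bootstrap. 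If you want to run the argument with $\Xi$ instead, you should mirror this pointwise first-failure contradiction (not a Gronwall), and check the sign of the residual term in \eqref{2.15} carefully; you will find it is $\le0$ at $\bar\pa^+\Xi=0$, which is exactly what is needed to contradict the geometric inequality $\bar\pa^-\bar\pa^+\Xi|_P\ge 0$ (not the other way around). Finally, note your slip ``integrate \eqref{2.15} along the $\Lambda_+$-characteristic'': $\bar\pa^-\bar\pa^+\Xi$ is a $\Lambda_-$-derivative and must be integrated along $\Lambda_-$.
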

\begin{proof}
We use the contradiction argument to prove this lemma. Thanks to Lemma \ref{lem1}, we know that  $\bar\pa^+\theta<0$ in the region near $B$ in $D_\eps$. Suppose that there is a point $P$ in $D_\eps$ that is the first time such that $\bar\pa^+\theta=0$, i.e., $\bar\pa^+\theta<0$ in the region $D_P:=D_\eps\cap\{(x,y)|\cos\omega(x,y)>\cos\omega(P)\}$. The proof consists of two cases.
\vspace{0.2cm}

\noindent \textbf{Case 1.} For any point $(x,y)\in D_p$, there holds $\bar\pa^-\theta<0$. In this case, from the point $P$, we draw a $\Lambda_-$-characteristic curve, called $\Gamma^-$, up to the boundary $\widehat{ABC}$ at a point $P_1$. Recalling the second equation in \eqref{2.9b}, we have
\begin{align*}
\fr{\cos^2\omega}{\kappa+\sin^2\omega}\bar\pa^-\omega=\bar\pa^-\theta-H_0\sin(2\omega)G(\omega),
\end{align*}
where means that $\bar\pa^-\omega<0$ on $\Gamma^-\setminus\{P\}$. Hence the direction of $\bar\pa^-$ is from $P$ to $P_1$ along $\Gamma^-$. It follows that $\bar\pa^-\bar\pa^+\theta<0$ at $P$. However, from the first equation of \eqref{2.16} one has
$$
\bar\pa^-\bar\pa^+\theta|_{P}=\bigg(\fr{H_0(\kappa+\sin^2\omega)G(\omega)} {\cos^2\omega}\cos(2\omega)\bar\pa^-\theta\bigg)(P)\geq0,
$$
which leads to a contradiction.
\vspace{0.2cm}

\noindent \textbf{Case 2.} There exists a point $Q\in D_p$ such that $\bar\pa^-\theta=0$. Then
from the point $Q$, we draw a $\Lambda_+$- characteristic curve, called $\Gamma^+$, up to the boundary $\widehat{BC}$ at a point $Q_1$. Due to $\bar\pa^-\theta<0$ at $Q_1$ by Lemma \ref{lem1}, without the loss of
generality, we assume $\bar\pa^-\theta<0$ on $\Gamma^+\setminus\{Q\}$. Otherwise we can take a point $Q_2$ between $Q$ and $Q_1$ on $\Gamma^+$ and then use $Q_2$ instead of $Q$ in the analysis below. According to the above assumption, we see that $\bar\pa^+\bar\pa^-\theta>0$ at $Q$. On the other hand, we recall the second equation of \eqref{2.16} to find
$$
\bar\pa^+\bar\pa^-\theta|_{Q}=\bigg(-\fr{H_0(\kappa+\sin^2\omega)G(\omega)} {\cos^2\omega}\cos(2\omega)\bar\pa^+\theta\bigg)(Q)\leq0.
$$
This results in  a contradiction. Therefore, we have $\bar\pa^+\theta<0, \bar\pa^-\theta<0$.

The conclusions $\bar\pa^+\omega>0, \bar\pa^-\omega<0$, $\bar\pa^+\Xi>0$ and $\bar\pa^-\Xi<0$ on the region $\overline{D_P}$ follow directly from the equations \eqref{2.9b} and \eqref{2.14}.  Thus we complete the proof this lemma.
\end{proof}

With the aid of Lemma \ref{lem4}, we can get the $C^0$ estimates on $(\theta,\omega)$.
\begin{lem}\label{lem5}
Suppose that the assumptions in Theorem \ref{thm1} hold and $(\theta, \omega)\in C^2(D_\eps)$ is a solution to system \eqref{2.9b} with the boundary data $(\theta, \omega )|_{\widehat{CB}}=(\hat{\theta}, \hat{\omega})(x)$ and $(\theta, \omega )|_{\widehat{BA}}=(\tilde{\theta}, \tilde{\omega})(y)$. Then one has
\begin{align}\label{3.9}
\theta(A)\leq\theta(x,y)\leq\theta(C),\quad \fr{\pi}{4}\leq\omega(B)\leq\omega(x,y)<\fr{\pi}{2},\quad \forall\ (x,y)\in D_\eps.
\end{align}
\end{lem}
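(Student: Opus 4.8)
The plan is to propagate the $C^0$ bounds from the boundary into the interior using the monotonicity of $\theta$ and $\omega$ along characteristics established in Lemma \ref{lem4}, together with the boundary estimates of Lemma \ref{lem2}. The key observation is that every point $(x,y)\in D_\eps$ can be joined back to the known boundary $\widehat{CB}\cup\widehat{BA}$ by characteristic curves, and along each characteristic the relevant quantity is monotone with a sign fixed by Lemma \ref{lem4}; since the direction of $\bar\pa^\pm$ along its characteristic is forward (from the interior toward the boundary data, as was checked inside the proof of Lemma \ref{lem4} via the sign of $\bar\pa^\pm\omega$), we get one-sided bounds that combine into the stated two-sided bounds.

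First I would treat $\theta$. Fix $(x,y)\in D_\eps$ and draw the $\Lambda_-$-characteristic $\Gamma^-$ through it; by Lemma \ref{lem4} one has $\bar\pa^-\omega<0$ along $\Gamma^-$, so $\Gamma^-$ runs forward (in the $\bar\pa^-$ direction) until it hits $\widehat{BC}$ at some point, say with abscissa $x^\ast\in[x_1,x_2]$. Along $\Gamma^-$ we have $\bar\pa^-\theta<0$, hence $\theta$ is larger at $(x,y)$ than at the endpoint on $\widehat{BC}$; since on $\widehat{BC}$ we have $\theta\ge\theta(B)$ by Lemma \ref{lem2}, this gives $\theta(x,y)\ge\theta(B)\ge\theta(A)$. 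For the upper bound I draw instead the $\Lambda_+$-characteristic $\Gamma^+$ through $(x,y)$; by Lemma \ref{lem4}, $\bar\pa^+\omega>0$, so $\Gamma^+$ runs forward from the boundary $\widehat{CB}\cup\widehat{BA}$ into $(x,y)$, and since $\bar\pa^+\theta<0$ along it, $\theta$ decreases from the boundary value to $\theta(x,y)$. On $\widehat{CB}$ the boundary value is $\le\theta(C)$ and on $\widehat{BA}$ it is $\le\theta(B)\le\theta(C)$, so in either case $\theta(x,y)\le\theta(C)$. (One must check $\Gamma^+$ reaches the data part of the boundary and not the sonic arc $\widehat{AC}$; this follows because $\omega<\pi/2$ on $D_\eps$ so the characteristic cannot terminate on $\widehat{AC}$ while staying in $D_\eps$, and tracing it backward it must exit through $\widehat{CB}$ or $\widehat{BA}$.)

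Next I would treat $\omega$. The upper bound $\omega(x,y)<\pi/2$ is automatic on $D_\eps$ since $\cos\omega>\eps>0$ there. For the lower bound, note that the second equation of \eqref{2.9b} combined with $\bar\pa^-\theta<0$ from Lemma \ref{lem4} gives, along any $\Lambda_-$-characteristic, $\tfrac{\cos^2\omega}{\kappa+\sin^2\omega}\bar\pa^-\omega=\bar\pa^-\theta-H_0\sin(2\omega)G(\omega)<0$, so $\omega$ is decreasing along $\Gamma^-$ in its forward direction, i.e.\ $\omega$ is smallest at the $\widehat{BC}$-endpoint. Thus $\omega(x,y)$ is bounded below by $\min_{\widehat{BC}}\omega$. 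But along $\widehat{BC}$ the boundary data is decreasing in $x$ (by \eqref{2.17}, since $(\sin\hat\omega)'<0$) and $\widehat{BC}$ connects to $\widehat{BA}$ at $B$; collecting this with Lemma \ref{lem2} we have $\min_{\widehat{BC}\cup\widehat{BA}}\omega=\omega(B)$, and $\omega(B)\ge\pi/4$ by the hypotheses in \eqref{2.17}. Hence $\omega(x,y)\ge\omega(B)\ge\pi/4$, completing the proof.

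The main obstacle I anticipate is the bookkeeping of where each characteristic terminates: one must verify that the forward direction of $\bar\pa^\pm$ (as dictated by the sign of $\bar\pa^\pm\omega$) is consistent across all of $D_\eps$, that no characteristic gets trapped or fails to reach the prescribed-data part of the boundary before leaving $D_\eps$, and that the corner point $B$ is handled correctly when a characteristic lands there. All of this is geometric and follows from Lemma \ref{lem4} plus the fact that $D_\eps$ is bounded away from the sonic arc, but it requires care to state cleanly rather than any new estimate.
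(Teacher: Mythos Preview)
Your approach is correct and coincides with the paper's, which omits the proof and simply says the lemma follows from Lemma~\ref{lem4}. There is, however, one concrete slip in the bookkeeping you yourself flagged as the main obstacle. The $\Lambda_-$-characteristic $\Gamma^-$ traced in the $\bar\partial^-$ direction from an interior point need not land on $\widehat{BC}$; it can equally well land on $\widehat{BA}$, since $\widehat{BA}$ is a $\Lambda_+$-characteristic and hence transversal to $\Gamma^-$ (this is exactly the geometry used in Lemma~\ref{lem8}, where the $\Lambda_-$-curve is drawn ``up to the boundary $\widehat{ABC}$''). As a consequence your intermediate assertion $\theta(x,y)\ge\theta(B)$ is false: any point on $\widehat{BA}$ strictly between $B$ and $A$ lies in $D_\eps$ for small $\eps$ and has $\theta<\theta(B)$ by Lemma~\ref{lem2}. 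The repair is immediate: allow the endpoint of $\Gamma^-$ to lie on $\widehat{BA}\cup\widehat{BC}$; Lemma~\ref{lem2} then gives $\theta\ge\theta(A)$ and $\omega\ge\omega(B)$ at that endpoint in either case, and the rest of your argument goes through unchanged.

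Conversely, for the upper bound on $\theta$ your $\Gamma^+$, traced in the $-\bar\partial^+$ direction, can only hit $\widehat{BC}$: since $\widehat{BA}$ is itself a $\Lambda_+$-characteristic, no other $\Lambda_+$-characteristic meets it. Your inclusion of $\widehat{BA}$ as a possible landing curve for $\Gamma^+$ is therefore unnecessary, though harmless for the conclusion.
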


Moreover, we have the properties about $\Lambda_\pm$-characteristics.
\begin{lem}\label{lem6}
Suppose that $(\theta, \omega)\in C^2(D_\eps)$ is a solution to system \eqref{2.9b} with the boundary data $(\theta, \omega )|_{\widehat{CB}}=(\hat{\theta}, \hat{\omega})(x)$ and $(\theta, \omega )|_{\widehat{BA}}=(\tilde{\theta}, \tilde{\omega})(y)$. Denote the $\Lambda_\mp$-characteristics in $D_\eps$ by $\Gamma^-: y=y(x)$ and  $\Gamma^+: x=x(y)$, respectively. Then the curves $\Gamma^-$ are convex and the curves $\Gamma^+$ are concave.
\end{lem}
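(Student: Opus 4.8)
The plan is to express the second derivative of the graph of each characteristic through the Mach and flow angles and to read off its sign from Lemmas~\ref{lem4} and \ref{lem5}. Write a $\Lambda_-$-characteristic as $\Gamma^-:y=y(x)$; by \eqref{2.5} its slope is $y'(x)=\Lambda_-=\tan\beta$, so for any function $f$ defined along $\Gamma^-$ one has, using \eqref{2.6},
\[
\frac{\mathrm d f}{\mathrm d x}=\pa_x f+\tan\beta\,\pa_y f=\frac{1}{\cos\beta}\bigl(\cos\beta\,\pa_x f+\sin\beta\,\pa_y f\bigr)=\frac{1}{\cos\beta}\,\bar\pa^- f .
\]
Taking $f=\tan\beta$ gives $y''(x)=\bar\pa^-\beta/\cos^3\beta$. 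Symmetrically, a $\Lambda_+$-characteristic $\Gamma^+:x=x(y)$ has $x'(y)=1/\Lambda_+=\cot\alpha$ and $\frac{\mathrm d}{\mathrm d y}=\frac{1}{\sin\alpha}\bar\pa^+$ along it, whence $x''(y)=-\bar\pa^+\alpha/\sin^3\alpha$. So it suffices to show $\bar\pa^-\beta/\cos^3\beta>0$ and $\bar\pa^+\alpha/\sin^3\alpha>0$ on $D_\eps$.

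For the numerators I would use \eqref{2.9b}: with $\beta=\theta-\omega$, eliminating $\bar\pa^-\theta$ by the second equation of \eqref{2.9b},
\[
\bar\pa^-\beta=\Bigl(\tfrac{\cos^2\omega}{\kappa+\sin^2\omega}-1\Bigr)\bar\pa^-\omega+H_0\sin(2\omega)G(\omega)=\tfrac{\cos(2\omega)-\kappa}{\kappa+\sin^2\omega}\,\bar\pa^-\omega+H_0\sin(2\omega)G(\omega),
\]
and, from the first equation, $\bar\pa^+\alpha=\tfrac{\kappa-\cos(2\omega)}{\kappa+\sin^2\omega}\,\bar\pa^+\omega+H_0\sin(2\omega)G(\omega)$. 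By Lemma~\ref{lem5} we have $\omega\ge\omega(B)\ge\pi/4$ on $D_\eps$, hence $\cos(2\omega)\le0<\kappa$, while $\sin(2\omega)>0$, $G(\omega)>0$ and $H_0\ge0$; together with $\bar\pa^-\omega<0$, $\bar\pa^+\omega>0$ from Lemma~\ref{lem4} this forces $\bar\pa^-\beta>0$ and $\bar\pa^+\alpha>0$ on all of $D_\eps$.

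It remains to check $\cos\beta>0$ and $\sin\alpha>0$ on $D_\eps$, which I would obtain from the $C^0$ bounds of Lemma~\ref{lem5} together with the boundary data. From $\omega\in[\omega(B),\pi/2)$ and $\theta\le\theta(C)=\arctan\varphi'(x_1)<\pi/2$ one gets immediately $\alpha=\theta+\omega<\pi$ and $\beta=\theta-\omega<\theta<\pi/2$; for the lower ends, \eqref{2.17} and \eqref{2.20} give $\arccot\psi'(y_2)=\hat\theta(x_2)+\hat\omega(x_2)>\pi/4$, so $\psi'(y_2)<1$, and since $\psi''<0$ also $\psi'(y_3)<1$, which yields $\theta(A)=\arccot\psi'(y_3)-\pi/2>-\pi/4$ and hence $\alpha\ge\theta(A)+\omega(B)>0$. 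Feeding $\bar\pa^-\beta>0$, $\bar\pa^+\alpha>0$, $\cos\beta>0$, $\sin\alpha>0$ into the two formulas for the second derivatives gives $y''(x)>0$ and $x''(y)<0$, i.e.\ $\Gamma^-$ is convex and $\Gamma^+$ is concave. The main obstacle I anticipate is precisely the bound $\cos\beta>0$, that is, that the $\Lambda_-$-characteristics stay genuine graphs and do not turn vertical inside $D_\eps$: since $\omega$ may approach $\pi/2$ near the (a priori unknown) sonic boundary while $\theta$ is simultaneously near its minimum, the crude inequality $\theta>-\pi/4$ does not by itself rule out $\cos\beta=0$, and one must propagate the sign of $\beta$ along each $\Gamma^-$ using the monotonicity $\bar\pa^-\beta>0$ and the admissible range of $\beta$ on the arcs of $\partial D_\eps$ where $\Gamma^-$ begins and ends; the inequality $\sin\alpha>0$ is comparatively routine, as $\alpha\in(0,\pi)$ throughout.
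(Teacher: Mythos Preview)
Your argument is essentially the paper's: both of you reduce the convexity/concavity to the signs of $\bar\pa^-\beta$ and $\bar\pa^+\alpha$ and then invoke the relations $\bar\pa^-=\cos\beta\,\fr{\rm d}{{\rm d}x}$ along $\Gamma^-$ and $\bar\pa^+=\sin\alpha\,\fr{\rm d}{{\rm d}y}$ along $\Gamma^+$. The only difference is cosmetic: the paper first records $\bar\pa^-\alpha<0$, $\bar\pa^+\beta<0$ (these follow immediately from \eqref{3.8}) and then rewrites \eqref{2.9b} in the $(\alpha,\beta)$ form to solve
\[
\bar\pa^-\beta=\fr{\cos(2\omega)-\kappa}{\kappa+1}\,\bar\pa^-\alpha+\fr{2H_0(\kappa+\sin^2\omega)\sin(2\omega)G}{\kappa+1}>0,
\]
and the symmetric identity for $\bar\pa^+\alpha$; you instead eliminate $\bar\pa^\pm\theta$ directly and land on the equivalent expressions in $\bar\pa^\pm\omega$. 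Either way one needs $\cos(2\omega)\le 0$, which is exactly the content of $\omega\ge\pi/4$ from Lemma~\ref{lem5}.

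Your caution about $\cos\beta>0$ is well placed, and in fact the paper does not address it: after obtaining $\bar\pa^+\alpha>0$, $\bar\pa^-\beta>0$ it simply writes ``with relations $\bar\pa^+=\sin\alpha\fr{\rm d}{{\rm d}y}$ and $\bar\pa^-=\cos\beta\fr{\rm d}{{\rm d}x}$, we conclude the lemma,'' taking the parametrizations $\Gamma^-:y=y(x)$ and $\Gamma^+:x=x(y)$ in the statement as part of the hypothesis rather than something to be proved. So the obstacle you flag is not resolved by the paper's own proof either; your treatment of $\sin\alpha>0$ and your honest identification of the $\cos\beta$ issue go beyond what the paper provides. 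Note also that your proposed propagation argument would have to be handled with care: since $\bar\pa^-\beta>0$ and the $\bar\pa^-$ direction points \emph{away} from the sonic curve (toward $\widehat{ABC}$), $\beta$ is smallest near the degenerate boundary, not on the data curves, so monotonicity alone does not furnish the lower bound $\beta>-\pi/2$.
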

\begin{proof}
Owing to the relations between $(\theta, \omega)$ and $(\alpha, \beta)$ \eqref{2.4}, we obtain by \eqref{3.8}
\begin{align}\label{3.10}
\bar\pa^-\alpha=\bar\pa^-\theta+\bar\pa^-\omega<0,\quad \bar\pa^+\beta=\bar\pa^+\theta-\bar\pa^+\omega<0,\quad \forall\ (x,y)\in D_\eps.
\end{align}
Moreover, we use \eqref{2.4} again to write system \eqref{2.9b} as
\begin{align*}
\left\{
   \begin{array}{l}
    (\kappa+1)\bar{\partial}^+\alpha+(\kappa-\cos(2\omega))\bar{\partial}^+\beta=
2H_0(\kappa+\sin^2\omega)\sin(2\omega)G(\omega), \\
    (\kappa-\cos(2\omega))\bar{\partial}^-\alpha+(\kappa+1)\bar{\partial}^-\beta=
2H_0(\kappa+\sin^2\omega)\sin(2\omega)G(\omega),
  \end{array}
\right.
\end{align*}
which, together with \eqref{3.10},  yield
\begin{align*}
\bar{\pa}^+\alpha&=\fr{(\cos(2\omega)-\kappa)}{\kappa+1}\bar{\pa}^+\beta +\fr{2H_0(\kappa+\sin^2\omega)\sin(2\omega)G(\omega)}{\kappa+1}>0, \\
\bar{\pa}^-\beta&=\fr{(\cos(2\omega)-\kappa)}{\kappa+1}\bar{\pa}^-\alpha +\fr{2H_0(\kappa+\sin^2\omega)\sin(2\omega)G(\omega)}{\kappa+1}>0.
\end{align*}
Therefore  with relations $\bar\pa^+=\sin\alpha\fr{{\rm d}}{{\rm d}y}$ and $\bar\pa^-=\cos\beta\fr{{\rm d}}{{\rm d}x}$, we conclude  the lemma.
\end{proof}

We proceed to derive the $C^1$ estimates on $(\theta,\omega)$. Let
\begin{align}\label{3.11}
M_0=\max\{\max_{\widehat{BA},\widehat{BC}}\bar{\partial}^+\Xi,\
 -\min_{\widehat{BC}}\bar{\partial}^-\Xi\}.
\end{align}
Then we have:

\begin{lem}\label{lem7}
Assume that $(\theta, \omega)\in C^2(D_\eps)$ is a solution to system \eqref{2.9b} with the boundary data $(\theta, \omega )|_{\widehat{CB}}=(\hat{\theta}, \hat{\omega})(x)$ and $(\theta, \omega )|_{\widehat{BA}}=(\tilde{\theta}, \tilde{\omega})(y)$. Then one has
\begin{align}\label{3.12}
0<\bar{\pa}^+\Xi<M_0,\quad -M_0<\bar{\pa}^-\Xi<0,\quad \forall\ (x,y)\in D_\eps.
\end{align}
\end{lem}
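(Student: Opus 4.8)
The plan is to run a maximum-principle/invariant-region argument on the characteristic decomposition \eqref{2.15} for $\Xi$, exactly parallel in spirit to the contradiction scheme used in Lemma \ref{lem4}. From Lemma \ref{lem4} we already know $\bar\pa^+\Xi>0$ and $\bar\pa^-\Xi<0$ throughout $D_\eps$, so only the upper bound $\bar\pa^+\Xi<M_0$ and the lower bound $\bar\pa^-\Xi>-M_0$ remain. First I would set up the continuity/first-violation argument: by Lemma \ref{lem1} and the boundary definition \eqref{3.11}, both inequalities in \eqref{3.12} hold in a neighborhood of $B$ inside $D_\eps$; suppose for contradiction that one of them first fails at some point $P\in D_\eps$, and let $D_P=D_\eps\cap\{\cos\omega>\cos\omega(P)\}$ be the region where \eqref{3.12} still holds (this set is bounded by portions of $\widehat{BC}$, $\widehat{BA}$ and characteristic arcs through $P$, using the monotonicity of $\omega$ established in Lemmas \ref{lem4}--\ref{lem5} and the convexity/concavity of characteristics in Lemma \ref{lem6}).

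Next I would treat the two cases. Suppose $\bar\pa^+\Xi$ first reaches $M_0$ at $P$. Draw the $\Lambda_-$-characteristic $\Gamma^-$ backward from $P$ to the boundary; along $\Gamma^-$ we have $\bar\pa^+\Xi\le M_0$ with equality at $P$, so the derivative of $\bar\pa^+\Xi$ in the $\bar\pa^-$ direction (which points from $P$ back to the boundary, since $\bar\pa^-\omega<0$ makes $\cos\omega$ increase that way) must be $\le 0$ at $P$; hence $\bar\pa^-\bar\pa^+\Xi|_P\ge 0$ when the derivative is read in the geometric direction of increasing arclength from the boundary to $P$, i.e. $\bar\pa^-\bar\pa^+\Xi|_P\le 0$ in the $\bar\pa^-$-operator sense. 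I then plug $\bar\pa^+\Xi(P)=M_0$, $\bar\pa^-\Xi(P)<0$, $H\equiv H_0\ge0$, $GH\ge0$ and $\cos(2\omega)\le0$ (valid since $\omega\ge\pi/4$ by Lemma \ref{lem5}) into the first line of \eqref{2.15}:
\begin{align*}
\bar\pa^-\bar\pa^+\Xi\big|_P=\frac{\kappa M_0+(\kappa+\sin^2\omega)GH_0}{\cos^2\omega}\big[M_0-\cos(2\omega)\bar\pa^-\Xi\big]+\frac{M_0}{\cos^2\omega}\big[M_0+\cos^2(2\omega)\bar\pa^-\Xi\big].
\end{align*}
The first bracket is $\ge M_0>0$ and its prefactor is $>0$; the second bracket satisfies $M_0+\cos^2(2\omega)\bar\pa^-\Xi>M_0-M_0=0$ because $|\bar\pa^-\Xi|<M_0$ and $\cos^2(2\omega)<1$ on $D_P$. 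So the whole right-hand side is strictly positive, forcing $\bar\pa^-\bar\pa^+\Xi|_P>0$, which contradicts the sign forced by the first-violation geometry. The symmetric case, $\bar\pa^-\Xi$ first reaching $-M_0$ at $P$, is handled identically using the second line of \eqref{2.15}, drawing a $\Lambda_+$-characteristic back to $\widehat{BC}$ and using $\bar\pa^+\Xi(P)<M_0$, $\bar\pa^-\Xi(P)=-M_0$, $\cos(2\omega)\le 0$ and $-GH_0\le 0$; there one must check $-M_0\kappa-(\kappa+\sin^2\omega)GH_0<0$ (clear) and that the factor $[-M_0-\cos(2\omega)\bar\pa^+\Xi]<0$ (since $\cos(2\omega)\le0$, $\bar\pa^+\Xi>0$ the product $-\cos(2\omega)\bar\pa^+\Xi\ge0$, so this needs the slightly more careful estimate $|\cos(2\omega)\bar\pa^+\Xi|<M_0$, which holds on $D_P$), obtaining $\bar\pa^+\bar\pa^-\Xi|_P<0$ against the geometry.

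The main obstacle I anticipate is bookkeeping the signs in the first-violation argument correctly: one has to be careful that the operator $\bar\pa^-$ points from $P$ toward the boundary (or away from it) in $D_P$, so that "$\bar\pa^+\Xi$ is maximal at $P$ along $\Gamma^-$" translates into the correct sign of $\bar\pa^-\bar\pa^+\Xi|_P$, and likewise that the cross-terms involving $\cos(2\omega)$ and the already-known-signed quantities $\bar\pa^\mp\Xi$ never spoil the strict positivity/negativity on $D_P$ rather than only on its closure. This requires invoking $\omega(B)\ge\pi/4$ (hence $\cos(2\omega)\le0$) from Lemma \ref{lem5}, the strict inequalities $0<\bar\pa^+\Xi<M_0$, $-M_0<\bar\pa^-\Xi<0$ in the interior of $D_P$, and $|\cos(2\omega)|<1$ away from $\omega=\pi/2$. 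Once the signs are pinned down, \eqref{2.15} does all the work and the estimate \eqref{3.12} follows.
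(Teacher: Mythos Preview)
Your approach is exactly the paper's: a first-violation argument on the level curves of $\cos\omega$, plugging the extremal value into the characteristic decomposition \eqref{2.15} and reading off a sign contradiction with the geometry of $\bar\pa^\mp$. Two sign slips to fix: in the case $\bar\pa^+\Xi(P)=M_0$, the bracket $M_0-\cos(2\omega)\bar\pa^-\Xi$ is \emph{at most} $M_0$ (since $\cos(2\omega)\le 0$ and $\bar\pa^-\Xi<0$ make $-\cos(2\omega)\bar\pa^-\Xi\le 0$), not $\ge M_0$; it is still strictly positive because $|\cos(2\omega)|<1$ in $D_\eps$, so your conclusion $\bar\pa^-\bar\pa^+\Xi|_P>0$ stands. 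In the symmetric case you multiply two negative factors and should obtain $\bar\pa^+\bar\pa^-\Xi|_P>0$ (not $<0$), which then contradicts the geometric constraint $\bar\pa^+\bar\pa^-\Xi|_P\le 0$ coming from $\bar\pa^-\Xi$ attaining its minimum at $P$ along the $\Lambda_+$-characteristic.
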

\begin{proof}
In light of  Lemma \ref{lem4}, it suffices to prove $\bar{\pa}^+\Xi<M_0$ and $\bar{\pa}^-\Xi>-M_0$. Suppose the curve $\ell_1:=\{(x,y)|\cos\omega(x,y)=\eps_1>\eps\}$ is the first time that either of $\bar{\pa}^+\Xi$ and $-\bar{\pa}^-\Xi$
touches the bound $M_0$ for the solution in $D_\eps$. Without the loss of generality,
we assume that $\bar{\pa}^+\Xi=M_0$ at the point $P$ on $\ell_1$. Then we put it into the first equation of \eqref{2.15} to obtain
\begin{align*}
\bar{\pa}^-\bar{\pa}^+\Xi|_P >\fr{\kappa M_0+H_0(\kappa+\sin^2\omega)G(\omega)}{\cos^2\omega}[M_0 +\bar{\pa}^-\Xi]+\fr{M_0}{\cos^2\omega}[M_0+\bar{\pa}^-\Xi]\geq0.
\end{align*}
On the other hand, we note the direction of $\bar\pa^-$ to know that $\bar{\pa}^-\bar{\pa}^+\Xi|_P\leq0$, which leads to a contradiction.
The proof of the lemma is complete.
\end{proof}
For later use, we show that $\bar{\pa}^+\Xi$ and $-\bar{\pa}^-\Xi$ have a positive lower bound independent of $\eps$ in the region $D_\eps$. For any point $(x,y)\in D_\eps$, we draw $\Lambda_\mp$-characteristic curves up to the boundaries $\widehat{ABC}$ and $\widehat{BC}$ at points $B_1$ and $B_2$, respectively, see Fig. \ref{fig3}. We have the following lemma.

\begin{lem}\label{lem8}
Assume that $(\theta, \omega)\in C^2(D_\eps)$ is a solution to system \eqref{2.9b} with the boundary data $(\theta, \omega )|_{\widehat{CB}}=(\hat{\theta}, \hat{\omega})(x)$ and $(\theta, \omega )|_{\widehat{BA}}=(\tilde{\theta}, \tilde{\omega})(y)$. Then  $\bar{\pa}^+\Xi$ and $-\bar{\pa}^-\Xi$ satisfy
\begin{align}\label{3.13}
0<m_0e^{-2d\overline{M}}\leq\bar{\pa}^+\Xi<M_0,\quad 0<m_0e^{-2d\overline{M}}\leq-\bar{\pa}^-\Xi<M_0,\quad \forall\ (x,y)\in D_\eps,
\end{align}
where $d$ is the diameter of the domain $ABC$ and
\begin{align*}
m_0=\min\{\min_{\widehat{BB_1}, \widehat{BB_2}}\bar\pa^+\Xi,\  \min_{\widehat{BB_2}}(-\bar\pa^-\Xi)\}, \ \ \overline{M}=2(\kappa+2)M_0+2(\kappa+1)H_0.
\end{align*}
\end{lem}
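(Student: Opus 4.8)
The plan is to establish the lower bound in Lemma~\ref{lem8} by a Gr\"onwall-type argument along characteristics, using the already-established upper bound from Lemma~\ref{lem7} to control the ``bad'' coefficient in the characteristic decomposition \eqref{2.15}. Fix a point $(x,y)\in D_\eps$ and draw the $\Lambda_-$-characteristic $\Gamma^-$ through it down to $\widehat{ABC}$ at $B_1$, and the $\Lambda_+$-characteristic $\Gamma^+$ through it down to $\widehat{BC}$ at $B_2$ (this is legitimate by Lemma~\ref{lem6}, which tells us the characteristics are monotone and convex/concave, so they do reach the initial boundary without leaving the patch). First I would rewrite the first equation of \eqref{2.15} along $\Gamma^-$ (on which $\bar\pa^-$ is, up to the positive factor $\cos\beta$, the arclength derivative) in the schematic form
\begin{align*}
\bar\pa^-(\bar\pa^+\Xi) = a(x,y)\,\bar\pa^+\Xi + b(x,y),
\end{align*}
where $a=\dfrac{\kappa\bar\pa^+\Xi+(\kappa+\sin^2\omega)GH_0+\bar\pa^+\Xi-\cos^2(2\omega)\,\text{(something)}}{\cos^2\omega}\cdot(\cdots)$ is gathered from the two bracketed terms and $b$ collects the terms that are products of $\bar\pa^-\Xi$ with the coefficients. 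Using Lemma~\ref{lem4} ($\bar\pa^+\Xi>0$, $\bar\pa^-\Xi<0$), Lemma~\ref{lem5} ($\omega<\pi/2$ so $\cos^2\omega>0$, though it degenerates — this is why we work in $D_\eps$), and the uniform bound $|\bar\pa^\pm\Xi|<M_0$ from Lemma~\ref{lem7}, I would show $|a(x,y)|\le \overline M/\cos^2\omega$ is not quite uniform; the right move is instead to estimate the \emph{logarithmic derivative}: one shows that $\dfrac{d}{ds}\ln(\bar\pa^+\Xi)$ along $\Gamma^-$ is bounded below by $-\overline M$ where $\overline M=2(\kappa+2)M_0+2(\kappa+1)H_0$, by grouping the $\cos^2\omega$ in the denominator against the factor $\sin(2\omega)$-type quantities in the numerator coming from $GH_0$ and from the cross terms, and bounding $\cos^2(2\omega)\le 1$, $\sin^2\omega\le 1$, $G(\omega)\le 1$.

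The key structural point to exploit is that every term on the right of \eqref{2.15} that could drive $\bar\pa^+\Xi$ to zero carries a factor of $\bar\pa^+\Xi$ itself (so it contributes to $a$, not to an additive sink) \emph{or} carries the factor $\cos^2(2\omega)\bar\pa^-\Xi$ which, combined with $\bar\pa^+\Xi>0$ in the term $\bar\pa^+\Xi[\bar\pa^+\Xi+\cos^2(2\omega)\bar\pa^-\Xi]$, can be written as $\bar\pa^+\Xi\cdot(\text{bounded})$. Thus after dividing by $\bar\pa^+\Xi>0$ the whole right-hand side becomes bounded in absolute value by $\overline M$ uniformly in $\eps$ (the $\cos^2\omega$ in the denominator is absorbed because the surviving numerator terms each contain a compensating $\cos^2\omega$ or $\sin^2\omega$ factor once one uses $(\kappa+\sin^2\omega)G(\omega)\sin(2\omega)=O(\cos\omega)$ near the sonic line and $\kappa+\sin^2\omega\le \kappa+1$). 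Integrating $\left|\dfrac{d}{ds}\ln\bar\pa^+\Xi\right|\le \overline M$ from $B_1$ to $(x,y)$ along $\Gamma^-$, and noting the arclength is at most the diameter $d$ of $ABC$ (actually twice, to be safe, giving the $2d$), yields
\begin{align*}
\bar\pa^+\Xi(x,y)\ \ge\ \bar\pa^+\Xi(B_1)\, e^{-2d\overline M}\ \ge\ m_0\, e^{-2d\overline M},
\end{align*}
since $B_1$ lies on $\widehat{BB_1}\subset\widehat{ABC}$ where $\bar\pa^+\Xi\ge m_0$ by definition of $m_0$. The bound $-\bar\pa^-\Xi\ge m_0 e^{-2d\overline M}$ is obtained symmetrically from the second equation of \eqref{2.15}, integrating along $\Gamma^+$ from $B_2\in\widehat{BC}$, where again $-\bar\pa^-\Xi\ge m_0$ by construction of $m_0$ (note the initial bound for $-\bar\pa^-\Xi$ is only needed on $\widehat{BB_2}$, consistent with the statement).

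The main obstacle I anticipate is the bookkeeping needed to confirm that, after dividing \eqref{2.15} by $\bar\pa^+\Xi$, the $1/\cos^2\omega$ singularity genuinely cancels rather than merely appearing to — one must check term by term that the ``free'' (non-$\bar\pa^+\Xi$-carrying) pieces, namely $(\kappa+\sin^2\omega)G(\omega)H_0\cdot[\bar\pa^+\Xi-\cos(2\omega)\bar\pa^-\Xi]/\cos^2\omega$, still carry $\bar\pa^+\Xi$ in the factor $\bar\pa^+\Xi-\cos(2\omega)\bar\pa^-\Xi$ only up to the $-\cos(2\omega)\bar\pa^-\Xi$ part, which is \emph{not} a multiple of $\bar\pa^+\Xi$; here one instead uses $-\bar\pa^-\Xi\le M_0$ together with $(\kappa+\sin^2\omega)G(\omega)/\cos^2\omega$ being bounded — which needs the explicit behavior $G(\omega)=\big(\sin^2\omega/(\kappa+\sin^2\omega)\big)^{(\kappa+1)/(2\kappa)}\to 1$ and the fact that this particular combination does \emph{not} blow up because it is not divided by $\cos^2\omega$ after all once one recombines with the $\sin(2\omega)$ hidden in the original form of \eqref{2.9b}. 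Sorting out exactly which grouping makes every coefficient uniformly bounded is the delicate part; once the constant $\overline M=2(\kappa+2)M_0+2(\kappa+1)H_0$ is seen to dominate all of them, the Gr\"onwall step and the conclusion are immediate.
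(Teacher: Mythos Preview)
Your approach has a genuine gap at precisely the ``main obstacle'' you flag. The quantity $(\kappa+\sin^2\omega)G(\omega)/\cos^2\omega$ is \emph{not} uniformly bounded as $\omega\to\pi/2$: since $G(\omega)\to 1$ and $\kappa+\sin^2\omega\to\kappa+1$, it blows up like $1/\cos^2\omega$. There is no hidden $\sin(2\omega)$ in \eqref{2.15} to cancel it --- that factor was already consumed in passing from \eqref{2.9b} to the characteristic decomposition. Consequently, after dividing the first equation of \eqref{2.15} by $\bar\pa^+\Xi$, the piece
$\frac{(\kappa+\sin^2\omega)G H_0}{\cos^2\omega}\cdot\frac{-\cos(2\omega)\bar\pa^-\Xi}{\bar\pa^+\Xi}$
is not controlled by any constant independent of $\eps$, and the same difficulty arises (even when $H_0=0$) from the cross term $\cos^2(2\omega)\bar\pa^-\Xi/\cos^2\omega$. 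A direct Gr\"onwall bound on $\ln\bar\pa^+\Xi$ along a single characteristic therefore cannot yield the $\eps$-independent lower bound claimed.

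The paper's proof rests on a different structural observation. One rewrites \eqref{2.15} so that the entire $1/\cos^2\omega$ singularity is isolated in a term of the form
\[
\frac{-\cos(2\omega)[\kappa\bar\pa^+\Xi+H_0(\kappa+\sin^2\omega)G]+\cos^2(2\omega)\bar\pa^+\Xi}{\cos^2\omega}\,(\bar\pa^+\Xi+\bar\pa^-\Xi),
\]
whose prefactor is \emph{nonnegative} (since $\omega\ge\pi/4$ gives $\cos(2\omega)\le0$), while the remainder equals $[2(\kappa+2\sin^2\omega)\bar\pa^+\Xi+2H_0(\kappa+\sin^2\omega)G]\bar\pa^+\Xi$ with coefficient bounded by exactly $\overline M$. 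Setting $R=e^{-\overline M r}\bar\pa^+\Xi$ (with $\bar\pa^-r=1$), one gets $\bar\pa^-R\le0$ \emph{provided} $\bar\pa^+\Xi+\bar\pa^-\Xi\le0$. The lower bound then follows from a coupled zigzag argument: wherever the sum is nonpositive one integrates $R$ along $\Gamma^-$; where it turns positive one switches to $S=e^{-\overline M s}(-\bar\pa^-\Xi)$ and integrates along $\Gamma^+$ until the sign flips back, accumulating at most a factor $e^{-2d\overline M}$ over the whole broken path. It is this sign mechanism --- not any cancellation of $1/\cos^2\omega$ --- that produces the uniform bound.
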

\begin{proof}
To prove \eqref{3.13}, we first rewrite the characteristic decompositions \eqref{2.15} as
\begin{align}\label{3.14}
\left\{
\begin{array}{l}
\bar{\pa}^-\bar{\pa}^+\Xi =\fr{-\cos(2\omega)[\kappa\bar{\pa}^+\Xi+H_0(\kappa+\sin^2\omega)G(\omega)] +\cos^2(2\omega)\bar{\pa}^+\Xi}{\cos^2\omega}(\bar{\pa}^+\Xi+\bar{\pa}^-\Xi) \\ \qquad \qquad \qquad +[2(\kappa+2\sin^2\omega)\bar{\pa}^+\Xi+2H_0(\kappa+\sin^2\omega)G(\omega)]\bar\pa^+\Xi, \\[5pt]
-\bar{\pa}^+(-\bar{\pa}^-\Xi) =-\fr{-\cos(2\omega)[\kappa(-\bar{\pa}^-\Xi)+H_0(\kappa+\sin^2\omega)G(\omega)] +\cos^2(2\omega)(-\bar{\pa}^-\Xi)}{\cos^2\omega}(\bar{\pa}^+\Xi+\bar{\pa}^-\Xi) \\ \qquad \qquad \qquad +[2(\kappa+2\sin^2\omega)(-\bar{\pa}^-\Xi)+2H_0(\kappa+\sin^2\omega)G(\omega)](-\bar\pa^-\Xi).
\end{array}
\right.
\end{align}
Here we used the directions $\bar\pa^-$ and $-\bar\pa^+$ since they  both point to the boundary $\widehat{ABC}$.
According to Lemmas \ref{lem5} and \ref{lem7}, we see that the terms are nonnegative
$$
\fr{-\cos(2\omega)[\kappa(\pm\bar{\pa}^\pm\Xi)+H_0(\kappa+\sin^2\omega)G(\omega)] +\cos^2(2\omega)(\pm\bar{\pa}^\pm\Xi)}{\cos^2\omega} \geq 0.
$$
 Let $r(x,y)$ and $s(x,y)$ be two smooth positive functions satisfying
$$
\bar\pa^-r=1,\quad -\bar\pa^+s=1.
$$
Denote
$$
R=e^{-\overline{M}r}\bar\pa^+\Xi, \quad S=e^{-\overline{M}s}(-\bar\pa^-\Xi).
$$
\begin{figure}[htbp]
\begin{center}
\includegraphics[scale=0.6]{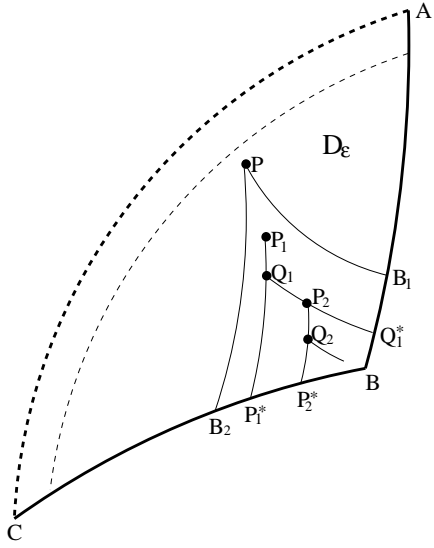}
\caption{\footnotesize The region $PB_1BB_2$.}\label{fig3}
\end{center}
\end{figure}
Then it follows from \eqref{3.14}
\begin{align}\label{3.15}
\left\{
\begin{array}{l}
\bar{\pa}^-R =\fr{-\cos(2\omega)[\kappa\bar{\pa}^+\Xi+H_0(\kappa+\sin^2\omega)G(\omega)] +\cos^2(2\omega)\bar{\pa}^+\Xi}{\cos^2\omega}e^{-\overline{M}r}(\bar{\pa}^+\Xi+\bar{\pa}^-\Xi) \\ \qquad \qquad \qquad +[2(\kappa+2\sin^2\omega)\bar{\pa}^+\Xi+2H_0(\kappa+\sin^2\omega)G(\omega)-\overline{M}]R=:F_1, \\[5pt]
-\bar{\pa}^+S =-\fr{-\cos(2\omega)[\kappa(-\bar{\pa}^-\Xi)+H_0(\kappa+\sin^2\omega)G(\omega)] +\cos^2(2\omega)(-\bar{\pa}^-\Xi)}{\cos^2\omega}e^{-\overline{M}s}(\bar{\pa}^+\Xi+\bar{\pa}^-\Xi) \\ \qquad \qquad \qquad +[2(\kappa+2\sin^2\omega)(-\bar{\pa}^-\Xi)+2H_0(\kappa+\sin^2\omega)G(\omega)-\overline{M}]S=:F_2.
\end{array}
\right.
\end{align}
We divide the proof into two cases.

\noindent \textbf{Case 1.} For any point $P\in D_\eps$, if $\bar{\pa}^+\Xi+\bar{\pa}^-\Xi\leq0$, i.e., $-\bar{\pa}^-\Xi\geq\bar{\pa}^+\Xi$, entirely in the region $PB_1BB_2$, then one has by the first equation of \eqref{3.15} and the definition of $\overline{M}$
\begin{align*}
\bar{\pa}^-R=F_1\leq0,
\end{align*}
from which we have $R|_P\geq R_{B_1}$. That is, there hold
\begin{align*}
-\bar{\pa}^-\Xi|_P\geq\bar{\pa}^+\Xi|_P\geq \bar{\pa}^+\Xi|_{B_1}e^{-\overline{M}[r(B_1)-r(P)]}\geq m_0e^{-2d\overline{M}}.
\end{align*}

\noindent \textbf{Case 2.} If there exists a point, say $P_1$, in the region $PB_1BB_2$ such that $\bar{\pa}^+\Xi+\bar{\pa}^-\Xi>0$ at $P_1$, then from the point $P_1$, we draw a $\Lambda_+$-characteristic curve, called $\Gamma^{+}_1$, up to the boundary $\widehat{BB_2}$ at a point $P^{*}_1$, see Fig. \ref{fig3}. It is obvious that $F_2<0$ on $\Gamma^{+}_1$ near the point $P_1$.
If $F_2\leq0$ always holds on $\Gamma^{+}_1$, then we apply the second equation of \eqref{3.15} to get
\begin{align*}
-\bar{\pa}^+S=F_2 \leq0
\end{align*}
on $\Gamma^{+}_1$. Noting the direction $-\bar\pa^+$ gives $S|_{P_1}\geq S|_{P^{*}_1}$. Thus we have
$$
\bar{\pa}^+\Xi|_{P_1}>(-\bar{\pa}^-\Xi)|_{P_1} \geq(-\bar{\pa}^-\Xi)|_{P^{*}_1}e^{-\overline{M}[s(P^{*}_1)-s(P_1)]}\geq m_0e^{-2d\overline{M}}.
$$
If there exist some points such that $F_2>0$ on $\Gamma^{+}_1$, then we take a
neighborhood $\mathbb{N}_1$ of $P_1$ so that $F_2<0$ on $\mathbb{N}_1\cap\Gamma^{+}_1$ and $F_2=0$ at $Q_1:=\partial(\mathbb{N}_1\cap\Gamma^{+}_1)$. Thus one obtains
\begin{align}\label{3.16}
\bar{\pa}^+\Xi|_{P_1}>(-\bar{\pa}^-\Xi)|_{P_1} \geq(-\bar{\pa}^-\Xi)|_{Q_1}e^{-\overline{M}[s(Q_1)-s(P_1)]}.
\end{align}
Due to the fact $F_2=0$ at $Q_1$, we see that $\bar{\pa}^+\Xi+\bar{\pa}^-\Xi<0$ at $Q_1$. Hence we get from \eqref{3.16}
\begin{align}\label{3.17}
\bar{\pa}^+\Xi|_{P_1}>(-\bar{\pa}^-\Xi)|_{P_1} \geq \bar{\pa}^+\Xi|_{Q_1}e^{-\overline{M}[s(Q_1)-s(P_1)]}.
\end{align}
We now draw a $\Lambda_-$-characteristic curve from the point $Q_1$, called $\Gamma^{-}_1$, up to the boundary $\widehat{ABC}$ at a point $Q^{*}_1$. Noting $F_1<0$ at $Q_1$, if $F_1\leq0$ always holds on $\Gamma^{-}_1$, then we repeat the process for Case 1 to conclude that
$$
\bar{\pa}^+\Xi|_{Q_1}\geq\bar{\pa}^+\Xi|_{Q^{*}_1}e^{-\overline{M}[r(Q^{*}_1)-r(Q_1)]}\geq m_0e^{-\overline{M}[r(Q^{*}_1)-r(Q_1)]}.
$$
Inserting the above into \eqref{3.17} gives
$$
\bar{\pa}^+\Xi|_{P_1}>(-\bar{\pa}^-\Xi)|_{P_1} \geq m_0e^{-\overline{M}\{[s(Q_1)-s(P_1)]+[r(Q^{*}_1)-r(Q_1)]\}}\geq m_0e^{-2d\overline{M}}.
$$
Otherwise, there exists a neighborhood $\mathbb{N}_2$ of $Q_1$ on $\Gamma^{-}_1$ such that $F_1<0$ holds in $\mathbb{N}_2\cap\Gamma^{-}_1$ and $F_1=0$ at $P_2:=\partial(\mathbb{N}_2\cap\Gamma^{-}_1)$. Thus one has $\bar\pa^-R\leq0$ on $\mathbb{N}_2\cap\Gamma^{-}_1$ which leads to
$$
\bar{\pa}^+\Xi|_{Q_1}\geq\bar{\pa}^+\Xi|_{P_2}e^{-\overline{M}[r(P_2)-r(Q_1)]}.
$$
We put it into \eqref{3.17} to find that
\begin{align*}
\bar{\pa}^+\Xi|_{P_1}>(-\bar{\pa}^-\Xi)|_{P_1} &\geq \bar{\pa}^+\Xi|_{P_2}e^{-\overline{M}\{[s(Q_1)-s(P_1)]+[r(P_2)-r(Q_1)]\}}   \\
&> (-\bar{\pa}^-\Xi)|_{P_2}e^{-\overline{M}\{[s(Q_1)-s(P_1)]+[r(P_2)-r(Q_1)]\}}.
\end{align*}
Here we used the inequality $(\bar{\pa}^+\Xi+\bar{\pa}^-\Xi)>0$ at $P_2$ which follows from the expression of $F_1$ and the fact $F_1=0$ at $P_2$. To estimate $(-\bar{\pa}^-\Xi)|_{P_2}$, we draw a positive characteristic curve from the point $P_2$, called $\Gamma^{+}_2$, up to the boundary $\widehat{BB_2}$ at a point $P^{*}_2$. We note $F_2<0$ at $P_2$ and then
repeat the above process in Case 2 to complete the proof the lemma.
\end{proof}

In light of  Lemma \ref{lem7}, we can establish the gradient estimates of solutions.
\begin{lem}\label{lem9}
Suppose that  $(\theta, \omega)\in C^2(D_\eps)$ is a solution to system \eqref{2.9b} with the boundary data $(\theta, \omega )|_{\widehat{CB}}=(\hat{\theta}, \hat{\omega})(x)$ and $(\theta, \omega )|_{\widehat{BA}}=(\tilde{\theta}, \tilde{\omega})(y)$. Then one has
\begin{align}\label{3.18}
\|(\theta, \omega)\|_{C^1(D_\eps)}\leq\fr{M}{\eps^2},
\end{align}
where $M$ is a positive constant independent of $\eps$.
\end{lem}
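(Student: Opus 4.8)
The plan is to turn the $\eps$-uniform control of the characteristic derivatives $\bar\pa^\pm\Xi$ from Lemma \ref{lem7} into a bound on the Cartesian derivatives $\theta_x,\theta_y,\omega_x,\omega_y$, keeping careful track of how many powers of $\eps$ are lost at each step. First I would use the characteristic relations \eqref{2.14}, which give $\bar\pa^\pm\theta=\mp\sin(2\omega)\bar\pa^\pm\Xi$; since $|\sin(2\omega)|\le1$, Lemma \ref{lem7} yields $|\bar\pa^\pm\theta|\le M_0$ throughout $D_\eps$, a bound not involving $\eps$. Next, from \eqref{2.14a} with $H\equiv H_0$ one has
$$\bar\pa^\pm\omega=\fr{2\sin\omega(\kappa+\sin^2\omega)}{\cos\omega}\big[\bar\pa^\pm\Xi\pm H_0 G(\omega)\big].$$
Here $0<G(\omega)\le1$ and $\kappa+\sin^2\omega\le\kappa+1$, while the $C^0$ estimate \eqref{3.9} gives $\sin\omega\ge\sin\tfrac{\pi}{4}=\tfrac{\sqrt2}{2}$ and, by the definition of $D_\eps$, $\cos\omega>\eps$. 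Combining these with $|\bar\pa^\pm\Xi|\le M_0$ from Lemma \ref{lem7}, we get $|\bar\pa^\pm\omega|\le C_1/\eps$ with $C_1$ depending only on $M_0$, $H_0$ and $\kappa$.

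Then I would invert the directional derivatives using \eqref{2.7}: for a smooth function $f$,
$$f_x=-\fr{\sin\beta\,\bar\pa^+f-\sin\alpha\,\bar\pa^-f}{\sin(2\omega)},\qquad f_y=\fr{\cos\beta\,\bar\pa^+f-\cos\alpha\,\bar\pa^-f}{\sin(2\omega)},$$
so $|f_x|+|f_y|\le C(|\bar\pa^+f|+|\bar\pa^-f|)/|\sin(2\omega)|$. On $D_\eps$ one has $\sin(2\omega)=2\sin\omega\cos\omega>\sqrt2\,\eps$, hence this inversion costs one further factor $\eps^{-1}$. Taking $f=\theta$ gives $|\theta_x|+|\theta_y|\le C/\eps$, and taking $f=\omega$ gives $|\omega_x|+|\omega_y|\le C/\eps^2$. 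Together with the uniform $C^0$ bounds \eqref{3.9} on $(\theta,\omega)$, this yields $\|(\theta,\omega)\|_{C^1(D_\eps)}\le M/\eps^2$ with $M$ determined by the boundary data alone.

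I do not expect a genuine obstacle in this lemma: it is a bookkeeping consequence of the $C^0$ estimate (Lemma \ref{lem5}) and the $\eps$-independent bound on $\bar\pa^\pm\Xi$ (Lemma \ref{lem7}), combined with the algebraic identities \eqref{2.7}, \eqref{2.14}, \eqref{2.14a}. The only points needing care are to check that each constant appearing ($M_0$, the lower bound $\tfrac{\sqrt2}{2}$ for $\sin\omega$, the bound $G(\omega)\le1$, the bound $\kappa+\sin^2\omega\le\kappa+1$) is controlled by the prescribed data and stays bounded away from a bad value as $\eps\to0$, and to keep the asymmetry in the $\eps$-powers straight: the $\omega$-derivatives already carry one factor $1/\cos\omega$ from \eqref{2.14a} before the inversion \eqref{2.7} introduces a second, which is precisely the source of the $\eps^{-2}$.
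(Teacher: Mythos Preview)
Your proof is correct and follows the paper's approach almost exactly: bound $\bar\pa^\pm\omega$ via \eqref{2.14a}, then invert to Cartesian derivatives via \eqref{2.7}, picking up the factor $1/\sin(2\omega)\sim 1/\eps$. The one refinement you miss is that for $\theta$ the paper substitutes \eqref{2.14} into \eqref{2.7} \emph{before} estimating, so the factor $\sin(2\omega)$ in $\bar\pa^\pm\theta=\mp\sin(2\omega)\bar\pa^\pm\Xi$ cancels the denominator and yields the $\eps$-free bound $|\theta_x|+|\theta_y|\le 2M_0$ of \eqref{3.19}; this sharper estimate is not needed for the present lemma but is invoked later in Section~\ref{S4}.
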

\begin{proof}
We first recall \eqref{2.7} and \eqref{2.14} to find that
\begin{align}\label{3.18a}
\theta_x=\sin\beta\bar\pa^+\Xi+\sin\alpha\bar\pa^-\Xi,\quad \theta_y=-\cos\beta\bar\pa^+\Xi-\cos\alpha\bar\pa^-\Xi,
\end{align}
which along with \eqref{3.12} arrives at
\begin{align}\label{3.19}
|\theta_x|+|\theta_y|\leq 2M_0.
\end{align}
Furthermore, we recall \eqref{2.14a} to acquire
\begin{align*}
|\bar{\pa}^\pm\omega|\leq\fr{2(\kappa+1)(M_0+H_0)}{\eps}.
\end{align*}
which together with \eqref{2.7} yields
\begin{align}\label{3.21}
|\omega_x|+|\omega_y|\leq\fr{2(|\bar{\pa}^+\omega|+|\bar{\pa}^-\omega|)}{\sin(2\omega)} \leq\fr{8(\kappa+1)(M_0+H_0)}{\eps^2}.
\end{align}
Combining with \eqref{3.19} and \eqref{3.21} finishes the proof of the lemma.
\end{proof}
In addition, we have  $C^{1,1}$ estimates on the solutions.
\begin{lem}\label{lem10}
Suppose that  $(\theta, \omega)\in C^2(D_\eps)$ is a solution to system \eqref{2.9b} with the boundary data $(\theta, \omega )|_{\widehat{CB}}=(\hat{\theta}, \hat{\omega})(x)$ and $(\theta, \omega )|_{\widehat{BA}}=(\tilde{\theta}, \tilde{\omega})(y)$. Then one has
\begin{align}\label{3.22}
\|(\theta, \omega)\|_{C^{1,1}(D_\eps)}\leq\fr{M}{\eps^5},
\end{align}
where $M$ is a positive constant, independent of $\eps$.
\end{lem}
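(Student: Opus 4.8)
The plan is to bound the coordinate second derivatives $\theta_{xx},\theta_{xy},\theta_{yy},\omega_{xx},\omega_{xy},\omega_{yy}$ by first controlling the second‑order directional derivatives of $\Xi$ (hence of $\omega$, through \eqref{2.14a}) and then passing back through \eqref{2.7}. Each application of $\pa_x$ or $\pa_y$ in \eqref{2.7} costs a factor $(\sin2\omega)^{-1}$, which is at most $M/\eps$ on $D_\eps$ since $\sin\omega\ge\sin\omega(B)>0$ by Lemma \ref{lem5} and $\cos\omega>\eps$, while \eqref{2.14a} converts $\bar\pa^\pm\Xi$ into $\bar\pa^\pm\omega$ at the cost of $(\cos\omega)^{-1}$. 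Counting these factors, it suffices to prove that every second‑order directional derivative of $\Xi$ obeys $|\bar\pa^i\bar\pa^j\Xi|\le M\cos^{-2}\omega$ on $D_\eps$ for $i,j\in\{+,-\}$; then \eqref{2.14a} upgrades this to $|\bar\pa^i\bar\pa^j\omega|\le M\cos^{-3}\omega$, and routine bookkeeping shows that all of $\theta_{xx},\dots,\omega_{yy}$ are then bounded by $M/\eps^5$, the extremal case being $\omega_{xx},\omega_{xy},\omega_{yy}$, which acquire one further $(\sin2\omega)^{-1}$ beyond the $M/\eps^4$ already built up from the lower‑order quantities.

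The mixed derivatives are immediate: the characteristic decomposition \eqref{2.15} exhibits $\bar\pa^-\bar\pa^+\Xi$ and $\bar\pa^+\bar\pa^-\Xi$ as $\cos^{-2}\omega$ times a polynomial in $\bar\pa^\pm\Xi$, $\cos2\omega$ and $H_0G(\omega)$, all of which are bounded by Lemmas \ref{lem5} and \ref{lem7}. For the pure derivatives I would differentiate \eqref{2.15} once more — the first identity along $\bar\pa^+$ and the second along $\bar\pa^-$ — and commute the iterated derivatives using the $[\bar\pa^+,\bar\pa^-]$ analogue of the commutator \eqref{2.10}, whose coefficients are exactly the quantities $(\sin2\omega)^{-1}(\bar\pa^+\beta-\cos2\omega\,\bar\pa^-\alpha)$ and $(\sin2\omega)^{-1}(\cos2\omega\,\bar\pa^+\beta-\bar\pa^-\alpha)$ already appearing in \eqref{2.16}. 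This produces a linear transport equation
\[
\bar\pa^-\!\big(\bar\pa^+\bar\pa^+\Xi\big)=c(x,y)\,\bar\pa^+\bar\pa^+\Xi+f(x,y)
\]
along the $\Lambda_-$‑characteristics, and symmetrically for $\bar\pa^-\bar\pa^-\Xi$ along the $\Lambda_+$‑characteristics, where $|c|\le M\cos^{-2}\omega$ and $|f|\le M\cos^{-4}\omega$ by Lemmas \ref{lem5}, \ref{lem7} and \ref{lem9}; the singular part of $f$ is produced by $\bar\pa^+(\cos^{-2}\omega)$ in \eqref{2.15}.

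The crux is to integrate this transport equation so that the singular coefficient $c$ does not cause growth faster than a fixed power of $1/\eps$. A direct Gronwall argument introduces a factor $\exp(\int|c|\,{\rm d}s)$ along the characteristic; although $\int\cos^{-2}\omega\,{\rm d}s$ is only logarithmic in $\eps$ — one changes variables by ${\rm d}\omega=\bar\pa^-\omega\,{\rm d}s$ and uses the uniform lower bound $|\bar\pa^-\omega|\ge(\text{const})\,m_0e^{-2d\overline M}\cos^{-1}\omega$ coming from Lemma \ref{lem8} and \eqref{2.14a} — this still leaves a power of $1/\eps$ of uncontrolled order. To obtain the exponent in \eqref{3.22} I would instead argue by contradiction, in the style of the proofs of Lemmas \ref{lem4} and \ref{lem7}: assuming that a suitably renormalized quantity such as $\cos^2\omega\,|\bar\pa^+\bar\pa^+\Xi|$ first exceeds a constant $N$ at a point $P$ of the level set $\{\cos\omega=\eps_1>\eps\}$, one shows, using the transport equation together with the established signs $\bar\pa^+\Xi>0$, $\bar\pa^-\Xi<0$, $\bar\pa^+\omega>0$ (Lemmas \ref{lem4}, \ref{lem7}) and $\cos2\omega\le0$ (valid since $\omega\ge\pi/4$ by Lemma \ref{lem5}), that $\bar\pa^-(\bar\pa^+\bar\pa^+\Xi)$ at $P$ must carry the sign opposite to the one forced by the direction of $\bar\pa^-$, a contradiction — once $N$ is taken large relative to the values of $\bar\pa^\pm\bar\pa^\pm\Xi$ on $\widehat{BC}$ and $\widehat{BA}$. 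Those boundary values are finite away from the sonic corner $A$ because $\cos\hat\omega$ and $\cos\tilde\omega$ are bounded below there, and near $A$ they are controlled through the compatibility relations \eqref{2.20}--\eqref{2.22} and the boundary estimates of Lemma \ref{lem1}. With $|\bar\pa^i\bar\pa^j\Xi|\le M\cos^{-2}\omega$ in hand, the factor‑counting of the first paragraph yields \eqref{3.22}. I expect the sign bookkeeping at the first‑touching point — making the inequalities line up as in Lemma \ref{lem7}, now for a third‑order quantity and in the presence of a genuinely singular forcing term $f$ — to be the main technical obstacle.
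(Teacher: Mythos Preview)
Your overall architecture matches the paper's: read off the mixed derivatives directly from the characteristic decompositions, obtain a transport equation for each pure second derivative by differentiating once more and using the $[\bar\pa^+,\bar\pa^-]$ commutator, and then pass back to $(x,y)$ derivatives via \eqref{2.7} and \eqref{2.14a}. The paper runs this for $\bar\pa^\pm\bar\pa^\pm\theta$ using \eqref{2.16}, whereas you aim at $\bar\pa^\pm\bar\pa^\pm\Xi$ using \eqref{2.15}; this is a cosmetic difference.

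Where you diverge is the integration of the transport equation. The paper simply writes $\bar\pa^-(\bar\pa^+\bar\pa^+\theta)+T_1\,\bar\pa^+\bar\pa^+\theta=T_2$ with $|T_1|\le M\eps^{-2}$, $|T_2|\le M\eps^{-3}$, and says ``integrating yields $|\bar\pa^+\bar\pa^+\theta|\le M\eps^{-3}$'', without any first-touching argument. You correctly observe that a bare Gronwall together with the change of variables ${\rm d}\omega=\bar\pa^-\omega\,{\rm d}s$ and the lower bound on $|\bar\pa^-\omega|$ from Lemma~\ref{lem8} only gives $\int|T_1|\,{\rm d}s\le C\ln(1/\eps)$, hence a bound $M\eps^{-C}$ with data-dependent $C$. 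But this is already a polynomial bound with $M$ independent of $\eps$, and that is all Lemma~\ref{lem12} uses (the $C^{1,1}$ estimate is applied on domains with $\eps$ fixed away from zero). The specific exponent $5$ in \eqref{3.22} is not carefully justified in the paper either, and need not be.

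The first-touching argument you propose for $\cos^2\omega\,|\bar\pa^+\bar\pa^+\Xi|$ is therefore unnecessary, and in fact is unlikely to close. In Lemma~\ref{lem7} the contradiction works because the right-hand side of \eqref{2.15} is \emph{homogeneous} in $(\bar\pa^+\Xi,\bar\pa^-\Xi)$, so at the touching point the sign structure forces the wrong sign. Your transport equation for $\bar\pa^+\bar\pa^+\Xi$ carries a genuine forcing $f$ of order $\cos^{-4}\omega$; after your rescaling by $\cos^2\omega$ this is still $O(\cos^{-2}\omega)$, so at a point where $\cos^2\omega\,|\bar\pa^+\bar\pa^+\Xi|=N$ the forcing can dominate and no sign contradiction is available. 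Drop this step and simply accept the polynomial bound that Gronwall already gives; the factor-counting in your first paragraph then finishes the proof, with an exponent possibly larger than $5$.
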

\begin{proof}
This lemma follows from the characteristic decompositions \eqref{2.15} and \eqref{2.16}. We first use the characteristic decompositions \eqref{2.15} and \eqref{2.16} and the gradient estimates \eqref{3.18} to obtain
\begin{align}\label{3.23}
|\bar{\pa}^+\bar{\pa}^-\theta|+|\bar{\pa}^-\bar{\pa}^+\theta|\leq\fr{M}{\eps},\quad |\bar{\pa}^+\bar{\pa}^-\Xi|+|\bar{\pa}^-\bar{\pa}^+\Xi|\leq\fr{M}{\eps^2}
\end{align}
for some positive constant $M$ independent of $\eps$. We next estimate the terms $|\bar{\pa}^\pm\bar{\pa}^\pm\theta|$ and $|\bar{\pa}^\pm\bar{\pa}^\pm\Xi|$. For the term $|\bar{\pa}^+\bar{\pa}^+\theta|$, we differentiate the first equation of \eqref{2.16} and employ
the commutator relation between $\bar{\pa}^-$ and $\bar{\pa}^+$
\begin{align*}
\bar{\pa}^-\bar{\pa}^+-\bar{\pa}^+\bar{\pa}^- =\fr{\cos(2\omega)\bar{\pa}^+\beta-\bar{\pa}^-\alpha}{\sin(2\omega)}\bar{\pa}^- -\fr{\bar{\pa}^+\beta-\cos(2\omega)\bar{\pa}^-\alpha}{\sin(2\omega)}\bar{\pa}^+,
\end{align*}
to achieve by performing a direct calculation
\begin{align}\label{3.24}
\bar{\pa}^-(\bar{\pa}^+\bar{\pa}^+\theta)+T_1\bar{\pa}^+\bar{\pa}^+\theta=T_2,
\end{align}
where $T_1$ and $T_2$ are lower-order terms which can be estimated
$$
|T_1|\leq\fr{M}{\eps^2},\quad |T_2|\leq \fr{M}{\eps^3}
$$
for some positive constant $M$ independent of $\eps$. Integrating \eqref{3.24} leads to
\begin{align}\label{3.25}
|\bar{\pa}^+\bar{\pa}^+\theta|\leq\fr{M}{\eps^3}.
\end{align}
With similar arguments for $|\bar{\pa}^-\bar{\pa}^-\theta|$ and $|\bar{\pa}^\pm\bar{\pa}^\pm\Xi|$, one proceeds to obtain
\begin{align}\label{3.26}
|\bar{\pa}^-\bar{\pa}^-\theta|\leq\fr{M}{\eps^3},\quad |\bar{\pa}^\pm\bar{\pa}^\pm\Xi|\leq\fr{M}{\eps^4}.
\end{align}
We combine \eqref{3.23} and \eqref{3.25}-\eqref{3.26} and apply \eqref{2.7} to obtain \eqref{3.22}. The proof is completed.
\end{proof}

\subsection{Global existence of solutions}

With the help of a priori estimates established above, we extend the local solution in Theorem \ref{thm2} to the whole region $ABC$. Again we assume the conditions in Theorem \ref{thm1} hold.
We first have the lemma directly following from the facts $\pm\bar\pa^\pm\omega>0$  in view of Lemma \ref{lem4}.
\begin{lem}\label{lem11}
Assume that $(\theta, \omega)\in C^2(D_\eps)$ is a solution to system \eqref{2.9b} with the boundary data $(\theta, \omega )|_{\widehat{CB}}=(\hat{\theta}, \hat{\omega})(x)$ and $(\theta, \omega )|_{\widehat{BA}}=(\tilde{\theta}, \tilde{\omega})(y)$. Then the level curves of $\omega$ in $D_\eps$ are $C^1$ and non-characteristic.
\end{lem}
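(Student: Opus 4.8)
The plan is to show that along each level curve $\{\omega = \text{const}\}$ the two characteristic directions never coincide with the tangent to the curve, which by the monotonicity of $\omega$ along characteristics gives both $C^1$ regularity (via the implicit function theorem) and non-characteristicness. First I would note that Lemma \ref{lem4} gives $\bar\pa^+\omega>0$ and $\bar\pa^-\omega<0$ throughout $D_\eps$, so in particular $\nabla\omega \neq 0$ at every point of $D_\eps$; hence each level set $\{\omega(x,y)=\text{const}\}$ is, by the implicit function theorem applied to the $C^2$ function $\omega$, a $C^1$ curve in $D_\eps$. That settles the $C^1$ claim.

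Next I would establish non-characteristicness. A level curve of $\omega$ through a point $(x_0,y_0)$ has tangent direction orthogonal to $\nabla\omega(x_0,y_0)$. If this tangent were the $\Lambda_+$-characteristic direction $(\cos\alpha,\sin\alpha)$, then $\bar\pa^+\omega = \cos\alpha\,\omega_x + \sin\alpha\,\omega_y$ would be the directional derivative of $\omega$ along the level curve, hence equal to $0$; but $\bar\pa^+\omega > 0$ in $D_\eps$ by Lemma \ref{lem4}, a contradiction. Identically, if the tangent were the $\Lambda_-$-characteristic direction, then $\bar\pa^-\omega = 0$, contradicting $\bar\pa^-\omega < 0$. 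Since $\sin(2\omega) > 0$ on $D_\eps$ (because $\omega\in(\pi/4,\pi/2)$ there by the bounds in Lemma \ref{lem5}, so $\alpha\neq\beta$ and the two characteristic families are genuinely distinct), the level curve is transversal to both characteristic families at every point, i.e. non-characteristic.

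The only mild subtlety — and the step I would flag as the one needing a word of care rather than a genuine obstacle — is ensuring the level curves have no singular points and that the whole argument is uniform: this is exactly why $\pm\bar\pa^\pm\omega$ must be \emph{strictly} signed (not merely nonzero), which Lemma \ref{lem4} provides, and why one restricts to $D_\eps$ where $\cos\omega>\eps$ keeps $\omega$ bounded away from $\pi/2$ so that $\nabla\omega$ stays bounded and the implicit function theorem applies with uniform constants on $D_\eps$. Thus the proof is a direct consequence of Lemma \ref{lem4}: the sign conditions $\bar\pa^+\omega>0>\bar\pa^-\omega$ simultaneously force $\nabla\omega\neq0$ (hence $C^1$ level curves) and force the directional derivative of $\omega$ along either characteristic to be nonzero (hence the level curves meet no characteristic tangentially).
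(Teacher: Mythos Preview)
Your proof is correct and follows the same approach as the paper, which simply states that the lemma follows directly from the facts $\pm\bar\pa^\pm\omega>0$ established in Lemma~\ref{lem4}. You have just spelled out the implicit-function-theorem and transversality details that the paper leaves to the reader.
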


Note that the level sets of $\omega$ are non-characteristic. Then we can take the level curves of $\omega$ as the
``Cauchy supports''  and  extend the local solution to the whole region $ABC$. For this purpose, we  introduce the definition of strong determinate domain.
\begin{defn}\label{def1}
Let $D'$ be a closed domain bounded by $\widehat{BA}$, $\widehat{BC}$ and $\ell$, where the curve $\ell$ is a level set of $\omega$ intersecting with $\widehat{BA}$, $\widehat{BC}$ and stays in the domain $ABC$. Suppose that $\omega\in C^1(D')$ and $\omega\in[\fr{\pi}{4},\fr{\pi}{2})$.
We call $D'$ is a strong determinate domain of $\omega$ if and only if, for all $(x_0,y_0)\in D'$, the two characteristic curves, defined by
$$
\begin{array}{ll}
\fr{{\rm d}y}{{\rm d}x}=\tan\beta(x,y), \ \ & \mbox{if }  x\geq x_0,\\
\fr{{\rm d}x}{{\rm d}y}=\tan\alpha(x,y), & \mbox{ if } y\leq y_0,
\end{array}
$$
intersect only with $\widehat{BA}$ and $\widehat{BC}$.
\end{defn}

The following lemma is important and it immediately implies  the global existence theorem.
\begin{lem}\label{lem12}
 For any $\bar{x}\in(x_C,x_B]$ on the curve $\widehat{BC}:\ y=\varphi(x)$, there exists a smooth curve $\ell_{\bar{x}}:\ \omega(x,y)=\omega(\bar{x},\varphi(\bar{x}))$ is $C^1$ inside the region $ABC$ to encircle a domain $D_{\bar\omega}$ with $\widehat{BA}$ and $\widehat{BC}$ such that

\noindent {\rm (i)} $D_{\bar\omega}$ is a strong determinate domain of $\omega$;

\noindent {\rm (ii)} the boundary value problem \eqref{2.9b}, subject to  the boundary data $(\theta, \omega )|_{\widehat{BC}}=(\hat{\theta}, \hat{\omega})(x)$ and $(\theta, \omega )|_{\widehat{BA}}=(\tilde{\theta}, \tilde{\omega})(y)$, has a supersonic solution $(\theta,\omega)\in C^1(D_{\bar\omega})$.
\end{lem}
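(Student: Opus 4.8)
The plan is to prove Lemma~\ref{lem12} by a continuation argument in the parameter $\bar{x}$, running backward from $x_B$ toward $x_C$, and showing that the set of $\bar{x}$ for which the claimed $C^1$ solution on a strong determinate domain $D_{\bar\omega}$ exists is nonempty, open, and closed in $(x_C,x_B]$. First I would set up the continuation: let $\mathcal{E}$ be the set of $\bar{x}\in(x_C,x_B]$ such that there is a $C^1$ level curve $\ell_{\bar{x}}$ of $\omega$ with value $\hat\omega(\bar{x})$ bounding a strong determinate domain $D_{\bar\omega}$ on which (i) and (ii) hold. Nonemptiness near $x_B$ comes from Theorem~\ref{thm2} (local existence at $B$) together with Lemma~\ref{lem11}, which guarantees the level sets of $\omega$ are $C^1$ and non-characteristic so that they can indeed serve as Cauchy supports and the small domain cut off near $B$ is strongly determinate. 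Openness of $\mathcal{E}$: given a solution on $D_{\bar\omega}$, the a priori estimates of Lemmas~\ref{lem5}, \ref{lem7}, \ref{lem9}, \ref{lem10} give $C^{1,1}$ bounds on $(\theta,\omega)$ in any interior sub-patch $D_\eps$, and since $\omega<\pi/2$ strictly on $\ell_{\bar{x}}$ (as $\hat\omega(\bar{x})<\pi/2$ for $\bar{x}>x_C$), standard hyperbolic local existence/continuation using the level curve $\ell_{\bar{x}}$ as a non-characteristic Cauchy datum extends the solution slightly past $\ell_{\bar{x}}$, i.e. to $D_{\bar\omega'}$ for $\bar{x}'$ slightly less than $\bar{x}$; the strong-determinacy and monotonicity of $\omega$ along characteristics (Lemma~\ref{lem4}, $\pm\bar\pa^\pm\omega>0$) ensure the enlarged domain is still strongly determinate and still bounded by $\widehat{BA}$, $\widehat{BC}$ and a level curve.

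For closedness, suppose $\bar{x}_n\downarrow\bar{x}_*>x_C$ with $\bar{x}_n\in\mathcal{E}$; I would show $\bar{x}_*\in\mathcal{E}$. The key point is that on $\bigcup_n D_{\bar\omega_n}$ the bound $\omega\le\hat\omega(\bar{x}_*)<\pi/2$ holds by Lemma~\ref{lem5}, so $\cos\omega\ge\eps_*:=\cos\hat\omega(\bar x_*)>0$ is bounded away from zero uniformly in $n$. Hence the $\eps$-independent lower bounds of Lemma~\ref{lem8} and the $C^{1,1}$ estimates of Lemma~\ref{lem10} (applied with this fixed $\eps_*$) give \emph{uniform} $C^{1,1}$ control of $(\theta,\omega)$ on $\bigcup_n D_{\bar\omega_n}$; by Arzelà--Ascoli one passes to the limit to get a $C^1$ (indeed $C^{1,1}$, hence $C^2$ by bootstrapping through the decompositions \eqref{2.15}--\eqref{2.16}) solution on the closure $D_{\bar\omega_*}$, whose boundary level curve $\ell_{\bar{x}_*}$ is $C^1$ and non-characteristic by Lemma~\ref{lem11}. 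One must also check the limiting domain is exactly bounded by $\widehat{BA}$, $\widehat{BC}$ and $\ell_{\bar{x}_*}$ and is strongly determinate: this follows because the characteristics depend continuously on the $C^1$ data and the characteristic curves $\Gamma^\pm$ are, respectively, convex and concave (Lemma~\ref{lem6}), so they cannot accumulate onto the sonic arc before reaching $\widehat{BA}\cup\widehat{BC}$, and monotonicity of $\omega$ (Lemma~\ref{lem4}) keeps $\ell_{\bar{x}_*}$ a graph meeting both side boundaries.

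The main obstacle is the closedness step, specifically verifying that the domains $D_{\bar\omega_n}$ do not degenerate as $n\to\infty$ — i.e., that the level curve $\ell_{\bar{x}_*}$ genuinely reaches both $\widehat{BA}$ and $\widehat{BC}$ and stays inside $ABC$ without the characteristic fan collapsing. This is where the convexity/concavity of the $\Lambda_\mp$-characteristics (Lemma~\ref{lem6}) together with the strict upper bound $\omega<\pi/2$ away from the corners $A,C$ (Lemma~\ref{lem5}) and the uniform estimates of Lemmas~\ref{lem7}--\ref{lem8} must be combined carefully: one shows a characteristic emanating from any point of $D_{\bar\omega_n}$ has bounded length (bounded by $d$, the diameter of $ABC$) and, by the one-sided curvature, exits transversally through a side boundary in bounded "time", so the domain cannot pinch. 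Once this geometric non-degeneracy is in hand, the passage to the limit is routine, and Lemma~\ref{lem12} follows; taking $\bar{x}\to x_C$ then yields the global solution on all of $ABC$ claimed in Theorem~\ref{thm1}, with the sonic curve $\widehat{AC}$ appearing as the limiting level set $\{\omega=\pi/2\}$.
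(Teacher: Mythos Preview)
Your continuation argument (nonempty, open, closed in $\bar x$) is correct and matches the paper's proof, which follows the Dai--Zhang technique: nonemptiness from Theorem~\ref{thm2}, closedness via the uniform $C^{1,1}$ bounds of Lemma~\ref{lem10} together with Arzel\`a--Ascoli, and openness by solving a Cauchy problem from the non-characteristic level curve. The one point the paper spells out that you subsume under ``standard local existence/continuation'' is the $C^1$ matching across the level curve in the openness step: the paper writes the $4\times4$ linear system for the one-sided traces of $(\theta_x,\omega_x,\theta_y,\omega_y)$ on $\hat\ell$ (two rows from the PDE \eqref{2.9b}, two from the tangential derivatives $\hat\theta',\hat\omega'$) and checks its determinant is nonzero precisely because $\hat\ell$ is non-characteristic (Lemma~\ref{lem11}), so the glued solution is genuinely $C^1$ on the union.
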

\begin{proof}
We employ the technique proposed by Dai and Zhang \cite{Dai} to show this lemma. Let $\mathcal{S}$ be
the set consisting of all elements of $(x_C,x_B]$ which satisfies the above assertions listed in this lemma. Thanks to Lemma \ref{lem11}, we see that, if $\tilde{x}\in \mathcal{S}$, then $[\tilde{x},x_B]\subset \mathcal{S}$. Thus it suffices to prove that the set $\mathcal{S}$ is not empty and $\inf \mathcal{S}=x_C$. The non-emptiness of set $\mathcal{S}$ follows obviously from the local existence theorem \ref{thm2}.

We are going to  verify $\inf \mathcal{S}=x_C$ by applying the contradiction argument.
Suppose that $\inf \mathcal{S}=\hat{x}>x_C$. The verification is divided into two steps. We will show that $\hat{x}\in \mathcal{S}$ in Step 1 and that there exists a small $\eps>0$ such that $[\hat{x}-\eps,\hat{x}]\subset \mathcal{S}$ in Step 2.
\vspace{0.2cm}

\noindent \textbf{Step 1.} In view of  the definition for $\hat{x}$, there exists a monotone decreasing sequence $\{x_i\}^{\infty}_{i=1}\subset \mathcal{S}$ such that $\displaystyle\lim_{i\rightarrow\infty}x_i=\hat{x}$. Then for every $x_i$, there exists a $C^1$ smooth curve $\ell_i: \omega(x,y)=\omega(x_i, \varphi(x_i))$ inside the domain $ABC$ to encircle $\Omega_i$ with $\widehat{BA}$ and $\widehat{BC}$ such that

\noindent {\rm (i)} $\Omega_i$ is a strong determinate domain of $\omega$;

\noindent {\rm (ii)} the boundary value problem \eqref{2.9b} with the boundary data $(\theta, \omega )|_{\widehat{CB}}=(\hat{\theta}, \hat{\omega})(x)$ and $(\theta, \omega )|_{\widehat{BA}}=(\tilde{\theta}, \tilde{\omega})(y)$ has a supersonic solution $(\theta,\omega)\in C^1(\Omega_i)$.

In view of the uniqueness of solutions and Lemma \ref{lem11}, we observe that the curve $\ell_i$ is below $\ell_j$ ($i<j$) along the directions $\bar{\partial}^+$ and $-\bar{\partial}^-$,  implying that the sequence $\{\ell_{i}\}^{\infty}_{i=1}$ is a monotone increasing along the directions $\bar{\partial}^+$ and $-\bar{\partial}^-$. Therefore, there exists a curve $\hat{\ell}: \omega(x, y)=\omega(\hat{x}, \varphi(\hat{x}))$ defined by the limit of $\ell_i$ along the characteristic directions. Denote by $\hat{\Omega}$ the closed domain bounded by $\widehat{BA}$, $\widehat{BC}$ and $\hat{\ell}$. Then the boundary value problem \eqref{2.9b}, subject to  the boundary data $(\theta, \omega )|_{\widehat{BC}}=(\hat{\theta}, \hat{\omega})(x)$ and $(\theta, \omega )|_{\widehat{BA}}=(\tilde{\theta}, \tilde{\omega})(y)$ has a $C^1$ solution in $\hat{\Omega}\backslash\hat{\ell}$, and $\hat{\Omega}\backslash\hat{\ell}$ is a strong determinate domain of $\omega$.

Due to Lemma \ref{lem10}, there exists a constant $M$ independent of $i$ such that for $i:1\leq i<\infty$
\begin{align}
\parallel(\theta,\omega)\parallel_{C^{1,1}(\Omega_i)}\leq M,
\nonumber
\end{align}
and
\begin{align}
\parallel(\theta,\omega)\parallel_{C^{1,1}
(\hat{\Omega}\backslash\hat{\ell})}\leq
M.
\nonumber
\end{align}
Combining with the above and Lemmas \ref{lem10} and \ref{lem11}, we know that the sequence $\ell_i$ are $C^{1,1}$ and the bounds of $\omega_x$ and $\omega_y$ are independent of $i$. Then by employing the Arzela-Ascoli theorem,
one finds that the curve $\hat{\ell}$ is $C^1$. We denote $(\theta_i,\omega_i)=(\theta,\omega)|_{\ell_i}$ and then see that
$$
\parallel(\theta_i,\omega_i)\parallel_{C^{1,1}(\Omega_i)}\leq M,
$$
where $C$ is a constant independent of $i$. It follows by the Arzela-Ascoli theorem that there exists
$(\hat{\theta},\hat{\omega})\in C^1$ such that
$$
\lim_{i\rightarrow\infty}(\theta_i,\omega_i)=(\hat{\theta},\hat{\omega}).
$$
We  let $(\theta,\omega)|_{\hat{\ell}}=(\hat{\theta},\hat{\omega})$ and extend $(\theta,\omega)(x,y)$ to the domain $\hat{\Omega}$. Hence the boundary value problem \eqref{2.9b}, subject to  the boundary data $(\theta, \omega )|_{\widehat{BC}}=(\hat{\theta}, \hat{\omega})(x)$ and $(\theta, \omega )|_{\widehat{BA}}=(\tilde{\theta}, \tilde{\omega})(y)$, has a $C^1$ solution in $\hat{\Omega}$. It is easy to see by the fact $\pm\bar{\partial}^{\pm}\omega>0$ that $\hat{x}\in \mathcal{S}$.

\noindent \textbf{Step 2.} Due to $\hat{x}\in \mathcal{S}$, there is a $C^1$ solution in a closed domain near
$\hat{\ell}$, denoted by $\mathcal{E_{\hat{\ell}}}$. Moreover, for any point $(\tilde{x},\tilde{y})\in \hat{\ell}$, we find that the curve $\hat{\ell}$ can be written as $y=\hat{y}(x)$ in a small neighborhood of $(\tilde{x},\tilde{y})$. In this neighborhood, let us denote the limits of $(\theta_{x},\omega_x,\theta_y,\omega_y)$ on the upper and the lower sides of the curve $\hat{\ell}$ by $(\theta_{x}^u,\omega_{x}^u,\theta_{y}^u,\omega_{y}^u)$ and
$(\theta_{x}^\ell,\omega_{x}^\ell,\theta_{y}^\ell,\omega_{y}^\ell)$, respectively.
Then, both of these vector-valued functions are solutions to the system
\begin{align}\label{3.27}
M(x)U(x)=N(x),
\end{align}
where $U(x)=(u_1,u_2,u_3,u_4)^\top(x)$ is a unknown function,
\begin{align}
M(x)=\left(
 \begin{array}{cccc}
   1 & \fr{\cos^2\omega}{\kappa+\sin^2\omega} & \tan\alpha & \fr{\cos^2\omega\tan\alpha}{\kappa+\sin^2\omega} \\
   1 & -\fr{\cos^2\omega}{\kappa+\sin^2\omega} & \tan\beta & -\fr{\cos^2\omega\tan\beta}{\kappa+\sin^2\omega} \\
   1 & 0 & \hat{y}'(x) & 0 \\
   0 & 1 & 0 & \hat{y}'(x)
 \end{array}
 \right),\quad
N(x)=\left(
  \begin{array}{c}
  \fr{H_0\sin(2\omega)G(\omega)}{\cos\alpha}\\
  \fr{H_0\sin(2\omega)G(\omega)}{\cos\beta}\\
  \hat{\theta}'(x)\\
  \hat{\omega}'(x)
  \end{array}
\right), \nonumber
\end{align}
and $\hat{\theta}(x)=\theta(x,\hat{y}(x))$, $\hat{\omega}(x)=\omega(x,\hat{y}(x))$.
By a direct calculation, we have
$$
\det(M(x))
=-\fr{\cos^2\omega}{\kappa+\sin^2\omega}(\hat{y}'(x)-\tan\alpha)(\hat{y}'(x)-\tan\beta),
$$
which, together with Lemma \ref{lem11}, gives $\det{(M(x))}\neq0$ on the curve $\hat{\ell}$. Thus system \eqref{3.27} has  a unique solution in the neighborhood of $(\tilde{x}, \tilde{y})$, that is,
$$
(\theta_{x}^u,\omega_{x}^u,\theta_{y}^u,
\omega_{y}^u)|_{\hat{\ell}}
=(\theta_{x}^\ell,\omega_{x}^\ell,
\theta_{y}^\ell,\omega_{y}^\ell)|_{\hat{\ell}}.
$$
Therefore, the boundary value problem \eqref{2.9b}, subject to  the boundary data $(\theta, \omega )|_{\widehat{BC}}=(\hat{\theta}, \hat{\omega})(x)$ and $(\theta, \omega )|_{\widehat{BA}}=(\tilde{\theta}, \tilde{\omega})(y)$, has a $C^1$ solution in $\hat{\Omega}\cup\mathcal{E_{\hat{\ell}}}$. The domain  $\hat{\Omega}\cup\mathcal{E_{\hat{\ell}}}$ is a strong determinate
domain of $\omega$. The equation $\omega(x,y)=\omega(\hat{x}-\eps,\varphi(\hat{x}-\eps))$ defines a $C^1$
curve whose graph lies in $\mathcal{E_{\hat{\ell}}}$ if $\eps>0$ is small enough in view Lemma \ref{lem11} again. The same arguments as those for $\hat{x}$ show that $\hat{x}-\eps\in \mathcal{S}$, which leads to a contradiction. Hence we obtain $\inf \mathcal{S}=x_C$ and then complete the proof of this lemma.
\end{proof}

In view of Lemma \ref{lem12}, we have the following global existence theorem.
\begin{thm}\label{thm3}
The boundary value problem \eqref{2.9a} with the boundary data $(H, \theta, \omega )|_{\widehat{CB}}=(H_0, \hat{\theta}, \hat{\omega})(x)$ and $(H, \theta, \omega )|_{\widehat{BA}}=(H_0, \tilde{\theta}, \tilde{\omega})(y)$ has a classical solution $(H_0, \theta, \omega )\in C^2$ in the domain $ABC$ with continuous sonic boundary $\widehat{AC}$.
\end{thm}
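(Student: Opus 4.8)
The plan is to obtain Theorem~\ref{thm3} by exhausting $ABC$ with the strong determinate domains supplied by Lemma~\ref{lem12} and passing to the limit as their bounding level curves approach the sonic set. Since the prescribed data equal $H_0$ on both $\widehat{BC}$ and $\widehat{BA}$ and $\bar\pa^0H=0$, the first equation of \eqref{2.9a} propagates the value $H_0$ along streamlines, so $H\equiv H_0$ throughout $ABC$; the system then reduces to \eqref{2.9b} for $(\theta,\omega)$, while the remaining quantity $\Omega$ (equivalently $(S,B)$) is recovered afterwards by integrating $\bar\pa^0\Omega=0$ from its value on $\widehat{BA}$. Hence it suffices to construct a $C^2$ supersonic solution of \eqref{2.9b} on $ABC\setminus\widehat{AC}$ and to show that the sonic set $\widehat{AC}=\{\omega=\pi/2\}$ is a continuous arc joining $A$ and $C$.

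First I would assemble a global $C^1$ solution. For each $\bar x\in(x_C,x_B]$, Lemma~\ref{lem12} furnishes a $C^1$ solution on the strong determinate domain $D_{\bar\omega}$ bounded by $\widehat{BA}$, $\widehat{BC}$ and the level curve $\ell_{\bar x}=\{\omega=\omega(\bar x,\varphi(\bar x))\}$. By uniqueness for this boundary value problem (already invoked inside the proof of Lemma~\ref{lem12}), for $\bar x_1<\bar x_2$ the domain corresponding to $\bar x_2$ is contained in the one for $\bar x_1$ and the two $C^1$ solutions agree on the overlap, so they patch to a single $C^1$ function on $\bigcup_{\bar x\in(x_C,x_B]}D_{\bar\omega}$. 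Since $\hat\omega$ is continuous and strictly decreasing on $[x_1,x_2]$ with $\hat\omega(x_1)=\pi/2$, the level $\omega(\bar x,\varphi(\bar x))$ increases to $\pi/2$ as $\bar x\downarrow x_C$; combined with the monotonicity $\pm\bar\pa^\pm\omega>0$ of Lemma~\ref{lem4}, the $C^0$ bound $\omega(B)\le\omega<\pi/2$ of Lemma~\ref{lem5}, and the nestedness of the $D_{\bar\omega}$, a short tracing-along-characteristics argument shows $\bigcup_{\bar x\in(x_C,x_B]}D_{\bar\omega}=ABC\setminus\widehat{AC}$, so the exhaustion omits precisely the set $\{\omega=\pi/2\}$.

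Next I would upgrade the regularity to $C^2$ on $ABC\setminus\widehat{AC}$. On every $D_\eps=\{\cos\omega>\eps\}\cap ABC$ the coefficients in the characteristic decompositions \eqref{2.15}--\eqref{2.16} are smooth, so a further round of the argument of Lemma~\ref{lem10} (differentiating those identities once more and integrating the resulting transport equations for the second-order directional derivatives along the $\Lambda_\pm$-characteristics, using $\varphi,\psi\in C^3$ and $\sin\hat\omega,\sin\tilde\omega\in C^2$ to bound the boundary contributions) yields $\eps$-dependent interior estimates for the second derivatives. Hence $(\theta,\omega)\in C^2(D_\eps)$ for all $\eps>0$, that is $(\theta,\omega)\in C^2(ABC\setminus\widehat{AC})$, and together with $H\equiv H_0$ this gives the classical solution $(H_0,\theta,\omega)\in C^2$.

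Finally, for the continuity of $\widehat{AC}$: the monotonicity of Lemma~\ref{lem4}, the $C^0$ bounds of Lemma~\ref{lem5}, and the convexity of the $\Lambda_-$-characteristics together with the concavity of the $\Lambda_+$-characteristics (Lemma~\ref{lem6}) show that along each $\Lambda_+$-characteristic issuing from $\widehat{BC}$ the angle $\omega$ increases strictly and tends to $\pi/2$, and that the corresponding terminal points depend monotonically and continuously on the characteristic; since the $\ell_{\bar x}$ are $C^1$, nested and convergent as $\bar x\downarrow x_C$, their limit is a continuous arc $\widehat{AC}$ joining $A$ to $C$. I expect this last step to be the main obstacle: near the sonic line the characteristic directions $\alpha=\theta+\pi/2$ and $\beta=\theta-\pi/2$ degenerate and the $C^{1,1}$ bound of Lemma~\ref{lem10} blows up like $\eps^{-5}$, so the passage to the limit $\bar x\downarrow x_C$ cannot use derivative estimates and must rest entirely on the monotone geometric structure and the uniform $C^0$ control; the sharper $C^{1,\fr{1}{6}}$ regularity of $\widehat{AC}$ is deferred to Section~\ref{S4}.
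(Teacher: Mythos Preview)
Your proposal is correct and follows the same approach as the paper: the paper's proof of Theorem~\ref{thm3} is a single sentence invoking Lemma~\ref{lem12}, and what you have written simply spells out the natural details the paper leaves implicit---patching the nested domains $D_{\bar\omega}$ via uniqueness, upgrading $C^1$ to $C^2$ on each $D_\eps$ by standard differentiation of the characteristic decompositions, and obtaining the continuity of $\widehat{AC}$ from the monotone nesting of the level curves $\ell_{\bar x}$. Your identification of the limiting behaviour near the sonic set as the delicate point, and your decision to defer any sharper regularity of $\widehat{AC}$ to Section~\ref{S4}, matches exactly how the paper is organised.
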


\section{Uniform regularity of solutions and sonic curve}\label{S4}

In this section, we investigate the uniform regularity of
solution up to the sonic boundary $\widehat{AC}$. Let us analyze what  we need to establish the regularity of the sonic boundary. Consider the level curves
$$
\ell^\eps=1-\sin\omega(x,y)=\eps,
$$
where $\eps$ is a small positive constant. We use \eqref{2.7} and \eqref{2.14a} to calculate
\begin{align}\label{4.1}
\ell^{\eps}_x&=-\cos\omega\omega_x=\fr{\sin\theta\cos\omega(\bar\pa^+\omega-\bar\pa^-\omega) -\cos\theta\sin\omega(\bar\pa^+\omega+\bar\pa^-\omega)}{2\sin\omega} \nonumber \\
&=(\kappa+\sin^2\omega)\bigg\{\sin\theta[\bar\pa^+\Xi-\bar\pa^-\Xi+2H_0G(\omega)] -\cos\theta\sin\omega\fr{\bar\pa^+\Xi+\bar\pa^-\Xi}{\cos\omega}\bigg\},
\end{align}
and
\begin{align}\label{4.2}
\ell^{\eps}_y&=-\cos\omega\omega_y=\fr{\cos\theta\cos\omega(\bar\pa^-\omega-\bar\pa^+\omega) -\sin\theta\sin\omega(\bar\pa^+\omega+\bar\pa^-\omega)}{2\sin\omega}\nonumber \\
&=-(\kappa+\sin^2\omega)\bigg\{\cos\theta[\bar\pa^+\Xi-\bar\pa^-\Xi+2H_0G(\omega)] +\sin\theta\sin\omega\fr{\bar\pa^+\Xi+\bar\pa^-\Xi}{\cos\omega}\bigg\}.
\end{align}
It follows that
\begin{align}\label{4.3}
(\ell^{\eps}_x)^2+(\ell^{\eps}_y)^2=(\kappa+\sin^2\omega)^2 \bigg\{[\bar\pa^+\Xi-\bar\pa^-\Xi+2H_0G(\omega)]^2+\sin^2\omega W^2\bigg\},
\end{align}
which,  together with \eqref{3.13}, leads to
\begin{align}\label{4.4}
0<(\kappa+\fr{1}{2})^2(\bar\pa^+\Xi-\bar\pa^-\Xi)^2\leq(\ell^{\eps}_x)^2+(\ell^{\eps}_y)^2\leq (\kappa+1)^2[4(M_0+H_0)^2+|W|^2],
\end{align}
where $W=(\bar\pa^+\Xi+\bar\pa^-\Xi)/\cos\omega$. Obviously, from \eqref{4.4}, we need the uniform boundedness of $W$ to establish the regularity of sonic boundary $\widehat{AC}$.

\subsection{A new partial hodograph transformation}

In order to show the uniform boundedness of $W$, we rewrite the characteristic decompositions of $\Xi$ to characterize clearly the singularity by introducing a new partial hodograph transformation.

Introduce
\begin{align}\label{4.5}
t=\cos\omega(x,y),\quad z=\theta(x,y).
\end{align}
The Jacobian of this transformation is
\begin{align}\label{4.6}
J:=\fr{\pa(t,z)}{\pa(x,y)}&=\sin\omega(\theta_x\omega_y-\theta_y\omega_x) \nonumber \\
&=-\fr{\sin\beta\bar{\pa}^+\theta-\sin\alpha\bar{\pa}^-\theta}{\sin(2\omega)} \cdot\fr{\cos\beta\bar{\pa}^+\omega-\cos\alpha\bar{\pa}^-\omega}{\sin(2\omega)}
\nonumber \\
&\qquad +\fr{\sin\beta\bar{\pa}^+\omega-\sin\alpha\bar{\pa}^-\omega}{\sin(2\omega)} \cdot\fr{\cos\beta\bar{\pa}^+\theta-\cos\alpha\bar{\pa}^-\theta}{\sin(2\omega)}
\nonumber \\
&=\fr{\bar{\pa}^+\omega\bar{\pa}^-\theta-\bar{\pa}^+\theta\bar{\pa}^-\omega}{2\cos\omega}
\nonumber \\
&=\sin\omega(\bar{\pa}^+\omega\bar{\pa}^-\Xi+\bar{\pa}^-\omega\bar{\pa}^+\Xi).
\end{align}
Inserting \eqref{2.14a} into the above gets
\begin{align}\label{4.7}
J=\fr{2F}{t}[2\bar{\pa}^+\Xi\bar{\pa}^-\Xi+H_0G(\bar{\pa}^-\Xi-\bar{\pa}^+\Xi)],
\end{align}
where
\begin{align}\label{4.8}
F=F(t)=(1-t^2)(\kappa+1-t^2),\quad G=G(t)=\bigg(\fr{1-t^2}{\kappa+1-t^2}\bigg)^{\fr{\kappa+1}{2\kappa}}.
\end{align}
In view of \eqref{3.13}, we find that $J<0$ when $t>0$ or $\omega<\fr{\pi}{2}$.

Denote
$$
U=\bar{\pa}^+\Xi,\quad V=\bar{\pa}^-\Xi.
$$
In terms of the new coordinates $(t,z)$, we have
\begin{align*}
\bar{\partial}^+&=-\fr{2F}{t}(U+H_0G)\pa_t-2t\sqrt{1-t^2}U\pa_z, \\
\bar{\partial}^-&=-\fr{2F}{t}(V-H_0G)\pa_t+2t\sqrt{1-t^2}V\pa_z.
\end{align*}
From the characteristic decomposition for $\Xi$ \eqref{2.15}, we derive the equations in terms of  $(U,V)$
\begin{align}\label{4.9}
\left\{
 \begin{array}{l}
  U_t-\fr{\sqrt{1-t^2}Vt^2}{F(V-H_0G)}U_z=-\fr{(\kappa+1)U+(\kappa+1-t^2)H_0G}{2F(V-H_0G)}\cdot\fr{U+V}{t}  +\fr{(\kappa+2-2t^2)U+(\kappa+1-t^2)H_0G}{F(V-H_0G)}Vt, \\[5pt]
  V_t+\fr{\sqrt{1-t^2}Ut^2}{F(U+H_0G)}V_z=-\fr{(\kappa+1)V-(\kappa+1-t^2)H_0G}{2F(U+H_0G)}\cdot\fr{U+V}{t}  +\fr{(\kappa+2-2t^2)V-(\kappa+1-t^2)H_0G}{F(U+H_0G)}Ut.
 \end{array}
\right.
\end{align}
Further introduce
$$
\overline{U}=U+H_0G,\quad \overline{V}=H_0G-V.
$$
Then we obtain the equations for $(\overline{U}, \overline{V})$ from \eqref{4.9}
\begin{align}\label{4.9a}
\left\{
 \begin{array}{l}
  \overline{U}_t-\fr{\sqrt{1-t^2}(\overline{V}-H_0G)t^2}{F\overline{V}}\overline{U}_z =\fr{\overline{U}}{2\overline{V}}\cdot\fr{\overline{U}-\overline{V}}{t}  +\fr{(\kappa+2-t^2)\overline{U}(\overline{U}-\overline{V})}{2F\overline{V}}t-\fr{2\kappa H_0}{(\kappa+1-t^2)^2}G^{\fr{1-\kappa}{\kappa+1}}t \\[5pt]
  \qquad \qquad \qquad \qquad \qquad \qquad -\fr{H_0G(\overline{U}-\overline{V})}{2F\overline{V}}t +\fr{[(\kappa+2-2t^2)\overline{U}-(1-t^2)H_0G](\overline{V}-H_0G)}{F\overline{V}}t,\\[5pt]
  \overline{V}_t+\fr{\sqrt{1-t^2}(\overline{U}-H_0G)t^2}{F\overline{U}}\overline{V}_z =\fr{\overline{V}}{2\overline{U}}\cdot\fr{\overline{V}-\overline{U}}{t}  +\fr{(\kappa+2-t^2)\overline{V}(\overline{V}-\overline{U})}{2F\overline{U}}t-\fr{2\kappa H_0}{(\kappa+1-t^2)^2}G^{\fr{1-\kappa}{\kappa+1}}t \\[5pt]
  \qquad \qquad \qquad \qquad \qquad \qquad -\fr{H_0G(\overline{V}-\overline{U})}{2F\overline{U}}t +\fr{[(\kappa+2-2t^2)\overline{V}-(1-t^2)H_0G](\overline{U}-H_0G)}{F\overline{U}}t.
 \end{array}
\right.
\end{align}
Furthermore, we set
$$
\widetilde{U}=\fr{1}{\overline{U}},\quad \widetilde{V}=\fr{1}{\overline{V}},
$$
which are positive and uniform bounded up to the sonic curve by Lemma \ref{lem8}. Then system \eqref{4.9a} can be rewritten as
\begin{align}\label{4.10}
\left\{
 \begin{array}{l}
  \widetilde{U}_t-\fr{\sqrt{1-t^2}(1-H_0G\widetilde{V})t^2}{F}\widetilde{U}_z =\fr{\widetilde{U}-\widetilde{V}}{2t}+F_1t, \\
  \widetilde{V}_t+\fr{\sqrt{1-t^2}(1-H_0G\widetilde{U})t^2}{F}\widetilde{V}_z =\fr{\widetilde{V}-\widetilde{U}}{2t}+F_2t,
 \end{array}
\right.
\end{align}
where
\begin{align}\label{4.10a}
\left\{
 \begin{array}{l}
  F_1=\fr{(\kappa+2-t^2)(\widetilde{U}-\widetilde{V})}{2F}+\fr{2\kappa H_0}{(\kappa+1-t^2)^2}G^{\fr{1-\kappa}{\kappa+1}}\widetilde{U}^2 -\fr{H_0G\widetilde{U}(\widetilde{U}-\widetilde{V})}{2F} \\[3pt] \qquad \quad -\fr{[(\kappa+2-2t^2)-(1-t^2)H_0G\widetilde{U}](1-H_0G\widetilde{V})\widetilde{U}}{F}, \\[3pt]
  F_2=\fr{(\kappa+2-t^2)(\widetilde{V}-\widetilde{U})}{2F}+\fr{2\kappa H_0}{(\kappa+1-t^2)^2}G^{\fr{1-\kappa}{\kappa+1}}\widetilde{V}^2 -\fr{H_0G\widetilde{V}(\widetilde{V}-\widetilde{U})}{2F} \\[3pt] \qquad \quad  -\fr{[(\kappa+2-2t^2)-(1-t^2)H_0G\widetilde{V}](1-H_0G\widetilde{U})\widetilde{V}}{F}.
 \end{array}
\right.
\end{align}
Denote
$$
\lambda_-=-\fr{\sqrt{1-t^2}(1-H_0G\widetilde{V})}{F}\cdot t^2,\qquad \lambda_+=\fr{\sqrt{1-t^2}(1-H_0G\widetilde{U})}{F}\cdot t^2,
$$
which are the eigenvalues of \eqref{4.10}. Moreover, we introduce
$$
\pa_\pm=\pa_t+\lambda_\pm\pa_z,\quad \overline{R}=\pa_+\widetilde{U}-\pa_-\widetilde{U},\quad \overline{S}=\pa_+\widetilde{V}-\pa_-\widetilde{V}.
$$
Then we obtain
\begin{align}\label{4.11}
\lambda_+-\lambda_-=\fr{\sqrt{1-t^2}[2-H_0G(\widetilde{U}+\widetilde{V})]} {F}\cdot t^2,\quad \widetilde{U}_z=\fr{\overline{R}}{\lambda_+-\lambda_-},\quad \widetilde{V}_z=\fr{\overline{S}}{\lambda_+-\lambda_-}.
\end{align}
In addition, using the commutator relation
$$
\pa_-\pa_+-\pa_+\pa_-=\fr{\pa_-\lambda_+-\pa_+\lambda_-}{\lambda_+-\lambda_-}(\pa_+-\pa_-),
$$
one obtains
\begin{align}\label{4.12}
\pa_-\overline{R}&=\pa_-\pa_+\widetilde{U}-\pa_-\pa_-\widetilde{U} \nonumber \\
&=\fr{\pa_-\lambda_+-\pa_+\lambda_-}{\lambda_+-\lambda_-}(\pa_+\widetilde{U}-\pa_-\widetilde{U}) +\pa_+\pa_-\widetilde{U}-\pa_-\pa_-\widetilde{U} \nonumber \\
&=\fr{\pa_-\lambda_+-\pa_+\lambda_-}{\lambda_+-\lambda_-}\overline{R}+ (\lambda_+-\lambda_-)(\pa_-\widetilde{U})_z,
\end{align}
and
\begin{align}\label{4.13}
\pa_+\overline{S}&=\pa_+\pa_+\widetilde{V}-\pa_+\pa_-\widetilde{V} \nonumber \\
&=\fr{\pa_-\lambda_+-\pa_+\lambda_-}{\lambda_+-\lambda_-}(\pa_+\widetilde{V}-\pa_-\widetilde{V}) +\pa_+\pa_+\widetilde{V}-\pa_-\pa_+\widetilde{V} \nonumber \\
&=\fr{\pa_-\lambda_+-\pa_+\lambda_-}{\lambda_+-\lambda_-}\overline{S}+ (\lambda_+-\lambda_-)(\pa_+\widetilde{V})_z.
\end{align}
We directly compute to obtain
\begin{align}\label{4.14}
\pa_-\lambda_+-\pa_+\lambda_- =&\bigg(\fr{\sqrt{1-t^2}t^2}{F}\bigg)_t[2-H_0G(\widetilde{U}+\widetilde{V})] \nonumber \\[3pt] &-\fr{\sqrt{1-t^2}H_0(\widetilde{U}+\widetilde{V})}{F}G_t t^2 -\fr{\sqrt{1-t^2}H_0Gt^2}{F}(\pa_-\widetilde{U}+\pa_+\widetilde{V}).
\end{align}
Thanks to the definition of $F$ in \eqref{4.8}, one gets
\begin{align}\label{4.15}
\bigg(\fr{t^2\sqrt{1-t^2}}{F}\bigg)_t=\fr{2t\sqrt{1-t^2}}{F}+\fr{t^3\sqrt{1-t^2}(\kappa+3-3t^2)}{F^2}.
\end{align}
Putting \eqref{4.15} into \eqref{4.14} and applying \eqref{4.10} one arrives at
\begin{align*}
\pa_-\lambda_+-\pa_+\lambda_- = &\bigg\{\fr{2\sqrt{1-t^2}}{F}t+ \fr{\sqrt{1-t^2}(\kappa+3-3t^2)}{F^2}t^3\bigg\}[2-H_0G(\widetilde{U}+\widetilde{V})]\\[3pt] &\ +\bigg\{ \fr{2\kappa\sqrt{1-t^2}H_0(\widetilde{U}+\widetilde{V})}{F(\kappa+1-t^2)^2}G^{\fr{1-\kappa}{\kappa+1}} -\fr{\sqrt{1-t^2}H_0G(F_1+F_2)}{F}\bigg\}t^3.
\end{align*}
Thus one has
\begin{align}\label{4.18}
\fr{\pa_-\lambda_+-\pa_+\lambda_-}{\lambda_+-\lambda_-}=\fr{2}{t}+th,
\end{align}
where $h$ denotes
\begin{align*}
h=\fr{\kappa+3-3t^2}{F}+\fr{1}{2-H_0G(\widetilde{U}+\widetilde{V})}\bigg\{\fr{2\kappa H_0(\widetilde{U}+\widetilde{V})}{(\kappa+1-t^2)^2}G^{\fr{1-\kappa}{\kappa+1}}-H_0G(F_1+F_2)\bigg\}.
\end{align*}
Furthermore, it follows from \eqref{4.10} that
\begin{align}\label{4.19}
(\pa_-\widetilde{U})_z=\fr{\widetilde{U}_z-\widetilde{V}_z}{2t}+tg_1\widetilde{U}_z+tg_2\widetilde{V}_z,
\end{align}
where $g_1$ and $g_2$ denote
\begin{align*}
g_1=&\fr{(\kappa+2-t^2)-H_0G(2\widetilde{U}-\widetilde{V})}{2F}+\fr{4\kappa H_0\widetilde{U}}{(\kappa+1-t^2)^2}G^{\fr{1-\kappa}{\kappa+1}} \\[3pt] &\ +\fr{[2(1-t^2)H_0G\widetilde{U}-(\kappa+2-2t^2)](1-H_0G\widetilde{V})}{F}, \\[3pt]
g_2=&\fr{H_0G\widetilde{U}-(\kappa+2-t^2)}{2F} +\fr{[(\kappa+2-2t^2)-(1-t^2)H_0G\widetilde{U}]H_0G\widetilde{U}}{F}.
\end{align*}
Similarly, $\widetilde{V}$ satisfies
\begin{align}\label{4.20}
(\pa_+\widetilde{V})_z=\fr{\widetilde{V}_z-\widetilde{U}_z}{2t}+th_1\widetilde{V}_z+th_2\widetilde{U}_z,
\end{align}
where $h_1$ and $h_2$ denote
\begin{align*}
h_1=&\fr{(\kappa+2-t^2)-H_0G(2\widetilde{V}-\widetilde{U})}{2F}+\fr{4\kappa H_0\widetilde{V}}{(\kappa+1-t^2)^2}G^{\fr{1-\kappa}{\kappa+1}} \\[3pt] &\ +\fr{[2(1-t^2)H_0G\widetilde{V}-(\kappa+2-2t^2)](1-H_0G\widetilde{U})}{F}, \\[3pt]
h_2=&\fr{H_0G\widetilde{V}-(\kappa+2-t^2)}{2F} +\fr{[(\kappa+2-2t^2)-(1-t^2)H_0G\widetilde{V}]H_0G\widetilde{V}}{F}.
\end{align*}
We insert \eqref{4.19} and \eqref{4.20} into \eqref{4.12} and \eqref{4.13}, respectively, and then use \eqref{4.18} and \eqref{4.11} to obtain
\begin{align}\label{4.21}
\left\{
 \begin{array}{l}
  \pa_-\overline{R}=\bigg(\fr{5}{2t}+t(h+g_1)\bigg)\overline{R}+\bigg(tg_2-\fr{1}{2t}\bigg)\overline{S}, \\[10pt]
  \pa_+\overline{S}=\bigg(\fr{5}{2t}+t(h+h_1)\bigg)\overline{S}+\bigg(th_2-\fr{1}{2t}\bigg)\overline{R}.
 \end{array}
\right.
\end{align}
Here the functions $h, g_1, g_2, h_1$ and $h_2$ are uniformly bounded up to $t=0$ by their explicit expressions.

To treat the current problem, we further introduce
\begin{align}\label{4.22}
\widetilde{R}=\fr{\overline{R}}{t^2},\quad \widetilde{S}=\fr{\overline{S}}{t^2}.
\end{align}
Then \eqref{4.21} can be transformed as
\begin{align}\label{4.23}
\left\{
 \begin{array}{l}
  \pa_-\bigg(t^{-\fr{1}{2}}\widetilde{R}\bigg)=t^{-\fr{3}{2}} \bigg(t^2(h+g_1)\widetilde{R}+\fr{tg_2-1}{2}\widetilde{S}\bigg),    \\[10pt]
  \pa_+\bigg(t^{-\fr{1}{2}}\widetilde{S}\bigg)=t^{-\fr{3}{2}} \bigg(t^2(h+h_1)\widetilde{S}+\fr{th_2-1}{2}\widetilde{R}\bigg).
 \end{array}
\right.
\end{align}

\subsection{Regularity in partial hodograph plane}

We employ \eqref{4.23} to derive the regularity of solutions near the line $t =0$.

We use $A'B'C'$ to denote the region $ABC$ in the $(t,z)$ plane. Let $(\bar{z},0)$ be any fixed point on the degenerate segment $\widehat{A'C'}$ and $t_{P}$ be small such that the point $P(t_P, \bar{z})$ stays in the domain $A'B'C'$. From the point $P$, we draw $\lambda_+$ and $\lambda_-$-characteristics, called $z_+(P)$ and $z_-(P)$, up to the segment $\widehat{A'C'}$ at $P_1$ and $P_2$, respectively. According to the the uniform boundedness of $h, g_1, g_2, h_1$ and $h_2$ near $t=0$, then for any constant $\nu\in(0,1]$, we can choose $t_P<1$ small enough such that
\begin{align}\label{4.24}
t^2|h+g_1|\leq\fr{\nu}{4}, \quad t^2|h+h_1|\leq\fr{\nu}{4}, \quad |tg_2-1|\leq 1+\nu, \quad |th_2-1|\leq 1+\nu
\end{align}
hold in the whole domain $PP_1P_2$. Moreover, we draw the $\lambda_\pm$-characteristics from the point $(\bar{z},0)$ to encircle a domain $D_\nu(\bar{z})$ with $\widehat{PP_1}$ and $\widehat{PP_2}$.
For any $(z,t)\in D_\nu(\bar{z})$, we denote by $a(z_a,t_a)$ and $b(z_b,t_b)$ the intersection points of the $\lambda_-$ and $\lambda_+$-characteristics through $(z, t)$ with the boundaries $\widehat{PP_1}$ and $\widehat{PP_2}$, respectively.  See Fig. \ref{fig4}.

Let
$$
\widehat{K}=\max\bigg\{\max_{D_\nu(\bar{z})}|\widetilde{R}(z_a,t_a)|+1,\ \ \max_{D_\nu(\bar{z})}|\widetilde{S}(z_b, t_b)|+1\bigg\},
$$
which is well-defined and uniformly bounded in the domain $D_\nu(\bar{z})$. We have the following lemma.

\begin{figure}[htbp]
\begin{center}
\includegraphics[scale=0.55]{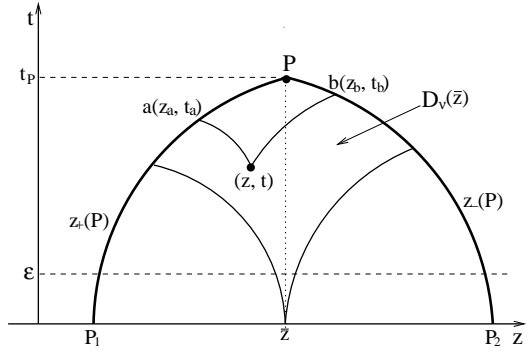}
\caption{\footnotesize The region $D_\nu(\bar{z})$.}\label{fig4}
\end{center}
\end{figure}

\begin{lem}\label{lem13}
Let $(\bar{z},0)$ be any point on the degenerate line $\widehat{A'C'}$ and $\nu\in(0,1]$ be any constant. Then there holds
\begin{align}\label{4.25}
|t^\nu \widetilde{R}|\leq \widetilde{K},\quad |t^\nu \widetilde{S}|\leq \widetilde{K}, \qquad \forall\ (z,t)\in D_\nu(\bar{z})
\end{align}
for some uniform positive constant $\widetilde{K}$.
\end{lem}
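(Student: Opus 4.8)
The plan is to integrate the system \eqref{4.23} along the $\lambda_{\pm}$-characteristics and to close the resulting integral inequalities by a bootstrap on the weighted quantity
$$
\mathcal{M}(t):=\sup\Big\{\,\tau^{\nu}\big|\widetilde{R}(z',\tau)\big|,\ \tau^{\nu}\big|\widetilde{S}(z',\tau)\big|\ :\ (z',\tau)\in\overline{D_{\nu}(\bar z)},\ \tau\ge t\,\Big\},
$$
which, by the $C^{2}$ regularity of $(\theta,\omega)$ for $t>0$ from Theorem \ref{thm3} together with the non-vanishing of the Jacobian \eqref{4.7} away from $t=0$, is a finite, non-increasing function of $t$ on $(0,t_{P}]$. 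All the structural inputs are already available: the equations \eqref{4.23} with coefficients $h,g_{1},g_{2},h_{1},h_{2}$ bounded uniformly up to $t=0$; the smallness \eqref{4.24}, valid throughout $PP_{1}P_{2}\supseteq D_{\nu}(\bar z)$; and the finiteness of $\widehat{K}$ recalled just before the statement. Geometrically, the latter holds because, as $(z,t)$ runs over $D_{\nu}(\bar z)$, the point $a$ runs over the sub-arc of $\widehat{PP_{1}}$ between $P$ and the point where the $\lambda_{-}$-characteristic issuing from $(\bar z,0)$ meets $\widehat{PP_{1}}$; since $\bar z$ is interior to $\widehat{A'C'}$, this sub-arc lies in a compact subset of $\{t>0\}$, on which $\widetilde{R}$ is continuous, and similarly for $b$ on $\widehat{PP_{2}}$ and $\widetilde{S}$.

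\textbf{The characteristic estimate.} Next I would fix $(z,t)\in D_{\nu}(\bar z)$ with $t>0$ and integrate the first equation of \eqref{4.23} along the $\lambda_{-}$-characteristic through $(z,t)$, from its intersection $a(z_{a},t_{a})$ with $\widehat{PP_{1}}$ (so $t\le t_{a}\le t_{P}<1$) down to $(z,t)$:
$$
t^{-1/2}\widetilde{R}(z,t)=t_{a}^{-1/2}\widetilde{R}(z_{a},t_{a})-\int_{t}^{t_{a}}\tau^{-3/2}\Big(\tau^{2}(h+g_{1})\widetilde{R}+\tfrac{\tau g_{2}-1}{2}\widetilde{S}\Big)\,\mathrm{d}\tau ,
$$
the integrand being evaluated along the characteristic, all of whose points lie in $\overline{D_{\nu}(\bar z)}$ with $\tau\ge t$. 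Multiplying by $t^{\nu+1/2}$, estimating the first term by $t^{\nu+1/2}t_{a}^{-1/2}\widehat{K}\le t_{a}^{\nu}\widehat{K}\le\widehat{K}$, invoking \eqref{4.24} in the form $\tau^{2}|h+g_{1}|+\tfrac12|\tau g_{2}-1|\le\tfrac{\nu}{4}+\tfrac{1+\nu}{2}$, and replacing $|\widetilde{R}|,|\widetilde{S}|$ at each point of integration by $\tau^{-\nu}\mathcal{M}(t)$, the integral term is bounded by $\big(\tfrac{\nu}{4}+\tfrac{1+\nu}{2}\big)\,t^{\nu+1/2}\!\int_{t}^{t_{a}}\tau^{-3/2-\nu}\,\mathrm{d}\tau\,\mathcal{M}(t)\le\tfrac{\nu/4+(1+\nu)/2}{\nu+1/2}\mathcal{M}(t)$. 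Hence
$$
t^{\nu}\big|\widetilde{R}(z,t)\big|\le\widehat{K}+\theta\,\mathcal{M}(t),\qquad\theta:=\frac{3\nu/4+1/2}{\nu+1/2}<1 ,
$$
where $\theta<1$ holds for every $\nu\in(0,1]$ precisely because \eqref{4.24} ties the smallness threshold to the same exponent $\nu$ used in the weight (indeed $\theta<1\Leftrightarrow 3\nu/4<\nu$). The identical computation on the second equation of \eqref{4.23}, integrated along the $\lambda_{+}$-characteristic from $b(z_{b},t_{b})\in\widehat{PP_{2}}$, gives $t^{\nu}|\widetilde{S}(z,t)|\le\widehat{K}+\theta\,\mathcal{M}(t)$.

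\textbf{Closing the bootstrap.} For any $(z',\tau)\in\overline{D_{\nu}(\bar z)}$ with $\tau\ge t$ the two displayed bounds yield $\tau^{\nu}|\widetilde{R}(z',\tau)|,\tau^{\nu}|\widetilde{S}(z',\tau)|\le\widehat{K}+\theta\,\mathcal{M}(\tau)\le\widehat{K}+\theta\,\mathcal{M}(t)$, since $\mathcal{M}$ is non-increasing; taking the supremum over such $(z',\tau)$ gives $\mathcal{M}(t)\le\widehat{K}+\theta\,\mathcal{M}(t)$, and because $\mathcal{M}(t)<\infty$ for $t>0$ this may be solved to give $\mathcal{M}(t)\le\widehat{K}/(1-\theta)$ for all $t\in(0,t_{P}]$, uniformly in $t$. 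This is exactly \eqref{4.25} with $\widetilde{K}:=\widehat{K}/(1-\theta)$, and letting $t\to0^{+}$ extends it up to the degenerate line $\widehat{A'C'}$.

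\textbf{Main obstacle.} The genuinely delicate points are the two that make the fixed-point inequality work: first, that $\mathcal{M}(t)$ is a priori finite for each $t>0$ (so that the $\theta\,\mathcal{M}(t)$ term may legitimately be absorbed), which rests on the global $C^{2}$ existence and the a priori bounds of Section \ref{S3}; and second, the exponent bookkeeping between the $t^{-1/2}$-weighting built into \eqref{4.23}, the weight $t^{\nu}$, and the $\nu$-calibrated smallness \eqref{4.24}, which must conspire to produce a contraction constant strictly less than one. Everything else is a routine integration along characteristics.
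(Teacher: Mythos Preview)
Your proof is correct and follows essentially the same approach as the paper: you integrate \eqref{4.23} along the $\lambda_\mp$-characteristics back to the boundaries $\widehat{PP_1},\widehat{PP_2}$, use the smallness conditions \eqref{4.24}, and close the estimate via the contraction factor $\theta=\dfrac{3\nu/4+1/2}{\nu+1/2}=\dfrac{2+3\nu}{2+4\nu}<1$, which is exactly the ratio appearing in the paper's computation. The only difference is packaging: the paper argues by contradiction (assuming $\widetilde K_{\eps_0}>\widehat K$ on a truncated domain and showing the maximum cannot be attained on $\{t=\eps_0\}$), whereas you set up the same inequality as a direct bootstrap $\mathcal{M}(t)\le\widehat K+\theta\,\mathcal{M}(t)$ and solve it; your version is arguably cleaner and makes the role of the finiteness of $\mathcal{M}(t)$ for $t>0$ explicit.
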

\begin{proof}
We use the bootstrap argument employed in \cite{SWZ} to show the lemma. For a fixed $\eps\in(0,t_P)$, we denote
$$
\widehat{D}_\eps:=\{(z,t):\ \eps\leq t\leq t_P, z_-(P)\leq z\leq z_+(P)\}\cap D_\nu(\bar{z})
$$
and $\widetilde{K}_\eps=\max\limits_{\widehat{D}_\eps}\{|t^\nu \widetilde{R}|, |t^\nu \widetilde{S}|\}$. If for any $0<\eps<t_P$, one has $\widetilde{K}_\eps\leq \widehat{K}$.  Then \eqref{4.25} holds. Otherwise, we assume there exists $\eps_0\in(0,t_P)$ such that $\widetilde{K}_{\eps_0}>\widehat{K}$.

Let $z_{-}^a(t)$ and $z_{+}^b(t)$ be the $\lambda_-$ and $\lambda_+$-characteristics  through the point $(z_{\eps_0},\eps_0)\in D_\nu(\bar{z})$ and intersecting $\widehat{PP_1}$ and $\widehat{PP_2}$ at points $a(z_a,t_a)$ and $b(z_b,t_b)$, respectively. We integrate the first equation of \eqref{4.23} along the curve $z_{-}^a(t)$ from $t(\geq\eps_0)$ to $t_a$,  and apply \eqref{4.24} and the definition of $\widehat{K}$ to find
\begin{align*}
&\bigg|\fr{\widetilde{R}}{t^{\fr{1}{2}}}\bigg|=\bigg|\fr{\widetilde{R}(z_a,t_a)}{t_{a}^{\fr{1}{2}}} +\int_{t}^{t_a}\fr{\tau^2(h+g_1)\widetilde{R} +\fr{tg_2-1}{2}\widetilde{S}}{\tau^{\fr{3}{2}}}(z_{-}^a(\tau),\tau)\ {\rm d}\tau\bigg|  \\[3pt]
\leq&\bigg|\fr{\widetilde{R}(z_a,t_a)}{t_{a}^{\fr{1}{2}}} \bigg| +\int_{t}^{t_a}\fr{\fr{\nu}{4}|\widetilde{R}|+\fr{1+\nu}{2}|\widetilde{S}|}{\tau^{\fr{3}{2}}}(z_{-}^a(\tau),\tau)\ {\rm d}\tau \leq\bigg|\fr{\widetilde{R}(z_a,t_a)}{t_{a}^{\fr{1}{2}}} \bigg| +\fr{2+3\nu}{4}\widetilde{K}_{\eps_0}\int_{t}^{t_a}\fr{1}{\tau^{\fr{3}{2}+\nu}}\ {\rm d}\tau  \\[3pt]
=&\bigg|\fr{\widetilde{R}(z_a,t_a)}{t_{a}^{\fr{1}{2}}} \bigg|
+\fr{2+3\nu}{4}\widetilde{K}_{\eps_0}\fr{1}{\fr{1}{2}+\nu}\bigg(\fr{1}{t^{\fr{1}{2}+\nu}} -\fr{1}{t_{b}^{\fr{1}{2}+\nu}}\bigg)<\fr{(2+3\nu)\widetilde{K}_{\eps_0}}{2+4\nu}t^{-\fr{1}{2}-\nu} <\widetilde{K}_{\eps_0}t^{-\fr{1}{2}-\nu},
\end{align*}
which leads to a strict inequality
\begin{align}\label{4.26}
|\widetilde{R}|_{t=\eps_0}<\widetilde{K}_{\eps_0}\eps_{0}^{-\nu},
\end{align}
on the line segment $\{t=\varepsilon_0\}\cap\widehat{D}_{\eps_0}$. One takes  similar arguments for the second equation \eqref{4.23} to arrive at
\begin{align}\label{4.27}
|\widetilde{S}|_{t=\eps_0}<\widetilde{K}_{\eps_0}\eps_{0}^{-\nu}.
\end{align}
We combine \eqref{4.26} and \eqref{4.27} to conclude that the maximum values of $|t^\nu\widetilde{R}|$ and $|t^\nu\widetilde{S}|$ in the domain $\widehat{D}_{\eps_0}$ can only attain on $\eps_0<t\leq t_P$, which
also holds in a larger domain $\widehat{D}_{\eps'}$,  $\eps'<\eps_0$. Then one may repeat the above processes to extend the domain larger and larger and until the whole domain $D_\nu(\bar{z})$. The proof  is complete.
\end{proof}

The results in Lemma \ref{lem13} can be extended a small interval on the segment $\widehat{A'C'}$. Let
$(\bar z, 0)$ be any point in $\widehat{A'C'}$ and $\mu$ be a small constant such that $(\bar z-\mu, \bar z+\mu)\subset\widehat{A'C'}$. From the point $Q_1:=(\bar z-\mu, 0)$ ($Q_2:=(\bar z+\mu, 0)$), we draw the  $\lambda_-$ (reap. $\lambda_+$)-characteristic up to the boundary $\widehat{PP_1}$ (resp. $\widehat{PP_2}$) at a point $Q^{*}_1$ (reap. $Q^{*}_2$).
Denote $D_\nu({\bar z}_\mu)$ the domain bounded by the boundaries $\widehat{PQ^{*}_1}, \widehat{Q^{*}_1Q_1}, \widehat{Q_1Q_2}, \widehat{Q_2Q^{*}_2}$ and $\widehat{Q^{*}_2P}$. Then we have

\begin{lem}\label{lem14}
Let $(\bar{z},0)$ be any point on the degenerate line $\widehat{A'C'}$ and $\mu$ be a small positive constant such that $(\bar z-\mu, \bar z+\mu)\subset\widehat{A'C'}$. Then, for any constant $\nu\in(0,1]$, there exists a positive constant $\widetilde{K}$ depending on the interval $(\bar z-\mu, \bar z+\mu)$ such that there hold
\begin{align}\label{4.28}
|t^\nu\widetilde{R}|\leq \widetilde{K},\quad |t^\nu\widetilde{S}|\leq \widetilde{K}, \qquad \forall\ (z,t)\in D_\nu({\bar z}_\mu).
\end{align}
\end{lem}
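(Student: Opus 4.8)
The plan is to extend Lemma~\ref{lem13} from a single base point to the interval $(\bar z-\mu,\bar z+\mu)$ by a covering argument, the key preliminary being that the constant furnished by the bootstrap in Lemma~\ref{lem13} may be taken to be exactly the quantity $\widehat K$ built in its proof, i.e. the maximum of $\max|\widetilde R|+1$ over the sub-arc of $\widehat{PP_1}$ that the $\lambda_-$-characteristics emanating from the relevant $D_\nu$-region reach, and of $\max|\widetilde S|+1$ over the corresponding sub-arc of $\widehat{PP_2}$. Indeed, if the maximum of $|t^\nu\widetilde R|$ or $|t^\nu\widetilde S|$ over $D_\nu$ exceeded $\widehat K$, the strict estimates \eqref{4.26}--\eqref{4.27} would push that maximum onto those two boundary arcs, where it is by construction strictly smaller, a contradiction; so the seed constant is all that matters, and it is finite because the arcs in question stay a definite distance away from $t=0$.

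First I would record the geometric identity $D_\nu(\bar z_\mu)=\bigcup_{\zeta\in[\bar z-\mu,\bar z+\mu]}D_\nu(\zeta)$, valid once $\mu$ is small enough that $[\bar z-\mu,\bar z+\mu]$ sits comfortably inside the base of the characteristic triangle $PP_1P_2$. The inclusion $\supseteq$ is immediate: since $\lambda_-<0<\lambda_+$ for $t>0$, the two characteristic families do not cross, so for $\zeta$ in the interval the region $D_\nu(\zeta)$ lies to the right of $\widehat{Q_1^*Q_1}$, to the left of $\widehat{Q_2Q_2^*}$, and below $\widehat{PP_1}\cup\widehat{PP_2}$, hence inside $D_\nu(\bar z_\mu)$. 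For $\subseteq$, take $(z,t)\in D_\nu(\bar z_\mu)$ and trace the $\lambda_+$- and $\lambda_-$-characteristics through it down to $t=0$, getting feet $z^-_+\le z\le z^-_-$; being to the right of $\widehat{Q_1^*Q_1}$ and to the left of $\widehat{Q_2Q_2^*}$ forces $z^-_-\ge\bar z-\mu$ and $z^-_+\le\bar z+\mu$, so $[z^-_+,z^-_-]$ meets $[\bar z-\mu,\bar z+\mu]$, and $(z,t)\in D_\nu(\zeta)$ for any $\zeta$ in that intersection.

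Next I would control the seed constant uniformly in $\zeta$. As $\zeta$ runs over $[\bar z-\mu,\bar z+\mu]$, the foot on $\widehat{PP_1}$ of the $\lambda_-$-characteristic from $(\zeta,0)$ moves monotonically and never drops below $Q_1^*$, so all the sub-arcs of $\widehat{PP_1}$ appearing in $\widehat K(\zeta)$ are contained in the fixed arc $\widehat{PQ_1^*}$; symmetrically the relevant sub-arcs of $\widehat{PP_2}$ lie in $\widehat{Q_2^*P}$. Since $Q_1^*$ and $Q_2^*$ have strictly positive $t$-coordinate, $\widehat{PQ_1^*}$ and $\widehat{Q_2^*P}$ lie in a strip $t\ge t_0>0$, on which the global solution of Theorem~\ref{thm3} is $C^{1,1}$ by Lemma~\ref{lem10}, so $\widetilde R$ and $\widetilde S$ are bounded there; hence there is a single finite $\widetilde K$ dominating $\max_{\widehat{PQ_1^*}}|\widetilde R|+1$ and $\max_{\widehat{Q_2^*P}}|\widetilde S|+1$. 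By the first paragraph this $\widetilde K$ serves in Lemma~\ref{lem13} for every $\zeta\in[\bar z-\mu,\bar z+\mu]$, so $|t^\nu\widetilde R|,|t^\nu\widetilde S|\le\widetilde K$ on each $D_\nu(\zeta)$, hence on their union $D_\nu(\bar z_\mu)$, which is \eqref{4.28}.

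I expect the only delicate point to be the geometric bookkeeping of the second paragraph: verifying that the one-point regions $D_\nu(\zeta)$ really exhaust $D_\nu(\bar z_\mu)$ with nothing escaping past the apex $P$, and that each $D_\nu(\zeta)$ with $\zeta$ in the interval is an admissible region of the type used in Lemma~\ref{lem13}; both rest on the strict sign $\lambda_-<0<\lambda_+$ and on taking $\mu$ small relative to the opening of $PP_1P_2$. If one prefers to bypass the covering, the same conclusion follows by rerunning the bootstrap of Lemma~\ref{lem13} verbatim on $D_\nu(\bar z_\mu)$: from an interior point one sends the $\lambda_\mp$-characteristics up to $\widehat{PQ_1^*}$ and $\widehat{Q_2^*P}$, uses $\max\{\max_{\widehat{PQ_1^*}}|\widetilde R|,\ \max_{\widehat{Q_2^*P}}|\widetilde S|\}+1$ as the seed constant, and integrates \eqref{4.23} along characteristics using \eqref{4.24}, which holds throughout $PP_1P_2$; the inequalities \eqref{4.26}--\eqref{4.28} then go through unchanged.
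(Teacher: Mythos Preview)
Your proposal is correct and is exactly the kind of argument the paper has in mind: the paper gives no separate proof of Lemma~\ref{lem14} at all, merely introducing the region $D_\nu(\bar z_\mu)$ and stating that ``the results in Lemma~\ref{lem13} can be extended to a small interval on the segment $\widehat{A'C'}$.'' Your covering argument, together with the observation that the seed constant $\widehat K$ from Lemma~\ref{lem13} can be bounded uniformly because the relevant boundary arcs $\widehat{PQ_1^*}$ and $\widehat{Q_2^*P}$ stay in $\{t\ge t_0>0\}$, is precisely the omitted justification; the alternative you sketch at the end---rerunning the bootstrap of Lemma~\ref{lem13} directly on $D_\nu(\bar z_\mu)$ with that same uniform seed---is equally valid and arguably closer to what the authors would write if pressed.
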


Now we establish the uniform boundedness of the function $\widetilde{W}:=(\widetilde{U}-\widetilde{V})/t$.
\begin{lem}\label{lem15}
The function $\widetilde{W}$ is uniformly bounded up to the sonic boundary $\widehat{A'C'}$.
\end{lem}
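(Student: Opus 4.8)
The plan is to show that $\widetilde{W}=(\widetilde{U}-\widetilde{V})/t$ stays bounded as $t\to 0$ by deriving a transport equation for it along characteristics and using the regularity already obtained in Lemmas \ref{lem13}–\ref{lem14}. First I would recall from \eqref{4.11} that $\widetilde{U}_z$ and $\widetilde{V}_z$ are expressed through $\overline{R},\overline{S}$ (hence through $\widetilde{R},\widetilde{S}$ after the rescaling \eqref{4.22}), and from \eqref{4.10} that $\partial_\pm\widetilde{U}$ and $\partial_\pm\widetilde{V}$ are governed by the $\frac{\widetilde{U}-\widetilde{V}}{2t}$ singular term plus bounded remainders. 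Subtracting the two equations of \eqref{4.10} along a suitable direction, one finds that the singular contribution to $\partial_t(\widetilde{U}-\widetilde{V})$ is $\frac{\widetilde{U}-\widetilde{V}}{t}$ itself, so $\widetilde{W}$ satisfies a transport relation of the schematic form $\partial_\pm\widetilde{W}=\frac{1}{t}\cdot(\text{bounded})\cdot\widetilde{W}+(\text{bounded})+(\text{terms involving }\widetilde{R},\widetilde{S})$. The point is that the coefficient of the $\tfrac1t$ singularity in front of $\widetilde{W}$, after careful bookkeeping, is strictly less than the critical value that would cause blow-up, just as the factor $\tfrac52$ in \eqref{4.23} combined with the $t^{-1/2}$ weight produced the bound in Lemma \ref{lem13}.

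Concretely, I would proceed as follows. Step one: write out $\partial_+\widetilde{W}$ (or $\partial_-\widetilde{W}$) explicitly using \eqref{4.10}, \eqref{4.10a}, and the commutator/identity $\widetilde{U}_z=\overline{R}/(\lambda_+-\lambda_-)=t^2\widetilde{R}/(\lambda_+-\lambda_-)$; since $\lambda_+-\lambda_-$ behaves like $t^2$ times a bounded nonvanishing factor near $t=0$ (see \eqref{4.11} and recall $F\to\kappa+1$, $2-H_0G(\widetilde U+\widetilde V)$ is bounded away from $0$ by \eqref{2.21} and Lemma \ref{lem8}), the quantities $\widetilde{U}_z,\widetilde{V}_z$ are comparable to $\widetilde{R},\widetilde{S}$, which are $O(t^{-\nu})$ by Lemma \ref{lem14} for any $\nu\in(0,1]$. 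Step two: isolate the genuinely singular term, which is $\frac{c(t)}{t}\widetilde{W}$ with $c(t)$ bounded and $c(0)$ an explicit constant; multiply by an integrating factor $t^{-a}$ for an appropriate exponent $a$ chosen so that the resulting equation for $t^{-a}\widetilde{W}$ has a right-hand side that is integrable in $t$ near $0$ after using the $O(t^{-\nu})$ bounds on $\widetilde{R},\widetilde{S}$ (choosing $\nu$ small enough to beat the power loss). Step three: integrate along the $\lambda_\pm$-characteristic from an interior point down to the sonic segment $\widehat{A'C'}$, exactly as in the proof of Lemma \ref{lem13}, using the boundary values of $\widetilde{W}$ there — which are controlled because on $\widehat{A'C'}$, $\widetilde U=\widetilde V$ (both equal $1/(H_0G(0))$ when $H_0>0$, or one argues directly via \eqref{2.14a} and the structure at $\omega=\pi/2$), so $\widetilde W$ has a finite limit there from the $C^1$-regularity in the $(t,z)$-plane away from $t=0$ — and conclude a uniform bound $|\widetilde W|\le \widetilde K$.

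The main obstacle I anticipate is the sign/size of the constant $c(0)$ multiplying the $\tfrac1t\widetilde W$ term: one must verify that after all the cancellations among the several $1/F$ and $1/(\kappa+1-t^2)^2$ contributions in $F_1-F_2$ (note $F_1-F_2$ has the structure $(\kappa+2-t^2)(\widetilde U-\widetilde V)/F+\dots = O(\widetilde W)\cdot t + \dots$, so much of it is already non-singular), the effective coefficient is compatible with the integrating-factor argument rather than forcing logarithmic or power-type growth. A secondary technical point is handling the $\widetilde U_z,\widetilde V_z$ terms, which are only $O(t^{-\nu})$ and not bounded; here the freedom to take $\nu$ as small as we like is essential, and one must check that the weight exponent $a$ and the exponent $\tfrac12$ already built into \eqref{4.23} leave enough room so that $\int_0 t^{-a-1+2-\nu}\,dt = \int_0 t^{1-a-\nu}\,dt$ converges for small $\nu$. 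Once these two bookkeeping checks go through, the integration-along-characteristics mechanism of Lemma \ref{lem13}, combined with Lemma \ref{lem14} to cover a neighborhood of an arbitrary point of $\widehat{A'C'}$, yields the uniform boundedness of $\widetilde W$ up to the sonic boundary.
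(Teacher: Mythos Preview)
Your outline is aimed in the right direction but misses the decisive simplification and, as written, contains a circularity. The paper's proof is a two-line computation: differentiate $\widetilde W=(\widetilde U-\widetilde V)/t$ with respect to $t$ directly and use \eqref{4.10}. Since $\widetilde U_t=-\lambda_-\widetilde U_z+\tfrac{\widetilde U-\widetilde V}{2t}+F_1t$ and $\widetilde V_t=-\lambda_+\widetilde V_z+\tfrac{\widetilde V-\widetilde U}{2t}+F_2t$, one gets
\[
\widetilde W_t=\frac{\widetilde U_t-\widetilde V_t}{t}-\frac{\widetilde U-\widetilde V}{t^2}
=(F_1-F_2)+\frac{\sqrt{1-t^2}\,t}{F}\bigl[(1-H_0G\widetilde V)\widetilde U_z+(1-H_0G\widetilde U)\widetilde V_z\bigr].
\]
The singular $\tfrac{1}{t}\widetilde W$ contribution cancels \emph{exactly}; there is no coefficient $c(0)$ to worry about and no integrating factor is needed. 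Then $F_1-F_2$ is manifestly bounded (all ingredients in \eqref{4.10a} are bounded and $F$ is bounded away from $0$), while the $z$-derivative term equals $\dfrac{(1-H_0G\widetilde V)(t^\nu\widetilde R)+(1-H_0G\widetilde U)(t^\nu\widetilde S)}{2-H_0G(\widetilde U+\widetilde V)}\,t^{1-\nu}$, which is bounded by Lemma \ref{lem14} with $\nu=\tfrac12$. Hence $\widetilde W_t$ is uniformly bounded and integration in $t$ from an interior level finishes the proof. If you insist on working along $\partial_\pm$, the same cancellation occurs (e.g.\ $\partial_-\widetilde W=(F_1-F_2)+t\widetilde S$), so your anticipated obstacle never materializes.

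The second gap is your use of ``boundary values of $\widetilde W$ on $\widehat{A'C'}$'': asserting that $\widetilde U=\widetilde V$ on the sonic line (in particular the claim that both equal $1/(H_0G(0))$) is equivalent to $U+V=0$ at $t=0$, which is precisely the consequence of $\widetilde W$ being bounded---so you cannot invoke it as input. The correct anchoring is at interior points (on $\widehat{PP_1}\cup\widehat{PP_2}$), exactly as in Lemma \ref{lem13}, and then integrate toward $t=0$; with $\widetilde W_t$ bounded this is immediate.
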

\begin{proof}
We derive the equation of $\widetilde{W}$ by \eqref{4.10}, \eqref{4.11} and \eqref{4.22}
\begin{align}\label{4.29}
\widetilde{W}_t=&(F_1-F_2)+\fr{\sqrt{1-t^2}t}{F}[(1-H_0G\widetilde{V})\widetilde{U}_z +(1-H_0G\widetilde{U})\widetilde{V}_z] \nonumber \\
=&(F_1-F_2) +\fr{(1-H_0G\widetilde{V})(t^\nu\widetilde{R}) +(1-H_0G\widetilde{U})(t^\nu\widetilde{S})}{2-H_0G(\widetilde{U}+\widetilde{\widetilde{V}})}t^{1-\nu}.
\end{align}
By choosing $t$ small enough and $\nu=1/2$ in Lemma \ref{lem14}, we know that  the two terms in the right hand side of  \eqref{4.29} are uniformly bounded,  which implies the uniform boundedness of $\widetilde{W}$.
\end{proof}
Thanks to Lemma \ref{lem15} and the definitions of $\widetilde{U}$ and $\widetilde{V}$, we acquire the uniform boundedness of $W:=(U+V)/t$ in the whole domain $A'B'C'$.  That is, there exists a uniform constant $K^*$ such that
\begin{align}\label{4.30}
\bigg|\fr{U+V}{t}(z,t)\bigg|\leq K^* \quad \forall\ (z,t)\in A'B'C',
\end{align}
which, together with \eqref{4.9},  implies  that $|\pa_-U|$ and $|\pa_+V|$ are uniformly bounded in the whole domain $A'B'C'$. Furthermore, we use \eqref{4.10} and \eqref{4.11} again to obtain
\begin{align*}
\widetilde{U}_t&=\fr{1-H_0G\widetilde{V}}{2-H_0G(\widetilde{U}+\widetilde{V})}(t^2\widetilde{R}) + \fr{1}{2}\widetilde{W}+F_1t,\\
\widetilde{V}_t&=-\fr{1-H_0G\widetilde{U}}{2-H_0G(\widetilde{U}+\widetilde{V})}(t^2\widetilde{S}) - \fr{1}{2}\widetilde{W}+F_2t,
\end{align*}
from which we see that  $(|\widetilde{U}_t|, |\widetilde{V}_t|)$ and of course  $(|U_t|, |V_t|)$ are uniformly bounded in the whole domain $A'B'C'$ containing the degenerate line $t=0$. Hence, the two functions $\bar\pa^+\Xi$ and $-\bar\pa^-\Xi$ approach a common value on the degenerate curve $\widehat{AC}$ with at least a rate of $\cos\omega$.

Next we determine the uniform regularity of $U(z,t)$, $V(z,t)$ and $W(z,t)$ up to $\widehat{A'C'}$.
\begin{lem}\label{lem16}
The functions $U(z,t)$, $V(z,t)$ and $W(z,t)$ are uniformly $C^{\fr{1}{3}}$ continuous in the whole domain $A'B'C'$, including the degenerate line $\widehat{A'C'}$.
\end{lem}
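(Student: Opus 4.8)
The plan is to upgrade the $L^\infty$ bounds already obtained for $\widetilde R$, $\widetilde S$ (equivalently $\widetilde U_z$, $\widetilde V_z$ via \eqref{4.11}) and for $\widetilde U_t$, $\widetilde V_t$ into a genuine H\"older modulus of continuity with exponent $\tfrac13$ up to the degenerate line $t=0$. The key structural fact, visible in \eqref{4.23} after the substitution $\widetilde R = \overline R/t^2$, $\widetilde S=\overline S/t^2$ and the weights $t^{-1/2}$, is that $\partial_-\!\bigl(t^{-1/2}\widetilde R\bigr)$ and $\partial_+\!\bigl(t^{-1/2}\widetilde S\bigr)$ are $O(t^{-3/2})$ times bounded quantities, so along a $\lambda_\mp$-characteristic the integral $\int_t^{t_a}\tau^{-3/2}\,\mathrm d\tau = O(t^{-1/2})$ controls the growth. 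Combined with Lemma~\ref{lem14} (applied with, say, $\nu=\tfrac12$), this gives $|\widetilde R|,|\widetilde S|\lesssim t^{-1/2}$, hence $|\widetilde U_z|,|\widetilde V_z|=|\overline R/(\lambda_+-\lambda_-)|\lesssim t^{3/2}/t^2 = t^{-1/2}$; similarly \eqref{4.10} and the bound \eqref{4.30} on $\widetilde W$ give $|\widetilde U_t|,|\widetilde V_t|\lesssim 1+|\widetilde R|\,t^2/F\lesssim t^{-1/2}$ as well — but actually the discussion just before the lemma already records that $|\widetilde U_t|,|\widetilde V_t|$ and $|U_t|,|V_t|$ are uniformly \emph{bounded}. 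So the only genuinely degenerate derivative is the $z$-derivative, with $|U_z|,|V_z|=O(t^{-1/2})$.

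First I would make this precise: from $|\widetilde U_z|,|\widetilde V_z|\le C t^{-1/2}$ and $|\widetilde U_t|,|\widetilde V_t|\le C$ one gets, for two points $(z,t)$ and $(z',t')$ in $A'B'C'$, the standard anisotropic H\"older estimate. Moving first in $t$ from $t'$ up to $t$ costs $\le C|t-t'|$; moving then in $z$ at fixed height $t$ costs $\le C\int$ of $t^{-1/2}$, i.e.\ $\le C t^{-1/2}|z-z'|$; but one is free to raise the height to any level $\sigma\ge\max(t,t')$ before translating in $z$, paying an extra $C\sigma$ for the vertical excursion, so the bound becomes $C\bigl(|t-t'|+\sigma + \sigma^{-1/2}|z-z'|\bigr)$. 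Optimizing over $\sigma$ (choose $\sigma\sim|z-z'|^{2/3}$ when this exceeds $\max(t,t')$, else $\sigma=\max(t,t')$) yields $|U(z,t)-U(z',t')|\le C\bigl(|t-t'|+|z-z'|^{2/3}\bigr)$, and likewise for $V$. Since $2/3>1/3$, in particular $U,V\in C^{1/3}$ uniformly. For $W=(U+V)/t$: near $t=0$ this is the quotient of $U+V$, which vanishes on $t=0$ at rate $\ge t$ by the remark preceding the lemma (that $\bar\partial^+\Xi$ and $-\bar\partial^-\Xi$ approach a common value with rate at least $\cos\omega=t$, equivalently $\widetilde W$ is bounded, hence $U+V = t\,W = t\cdot O(1)\cdot(\text{smooth})$). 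So $W=\widetilde W\cdot(\text{bounded smooth factor})$ essentially; one then runs the same characteristic-integration argument on the equation for $\widetilde W$ in \eqref{4.29} — whose right side is bounded plus a $t^{1-\nu}=t^{1/2}$-weighted combination of the already-controlled $t^\nu\widetilde R$, $t^\nu\widetilde S$ — to get $|\widetilde W_t|\le C$ and $|\widetilde W_z|\le C t^{-1/2}$, and the same optimization gives $\widetilde W$, hence $W$, uniformly $C^{1/3}$.

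The main obstacle is the passage from the one-sided (characteristic-direction) integral bounds to a two-sided modulus of continuity valid across the degenerate boundary: away from $t=0$ everything is classical $C^1$, but at $t=0$ the $z$-derivatives blow up like $t^{-1/2}$, so one must make the ``lift to height $\sigma$ then translate'' argument rigorous and check that the characteristics $\lambda_\pm$ (whose slopes are $O(t^2/F)$, i.e.\ $O(t^2)$ near $t=0$ by \eqref{4.11}) are nearly vertical there, so that moving along them to reach a common $z$-fiber is a controlled perturbation of a pure vertical move — this is what makes the anisotropic estimate legitimate and is where the exponent $\tfrac13$ (rather than $\tfrac12$) ultimately comes from. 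I would also need to verify that the constants in Lemma~\ref{lem14} can be taken uniform over a covering of the whole segment $\widehat{A'C'}$ by finitely many intervals $(\bar z-\mu,\bar z+\mu)$, using compactness of $\widehat{A'C'}$, so that ``uniformly $C^{1/3}$ in the whole domain $A'B'C'$'' is justified rather than merely local near each boundary point.
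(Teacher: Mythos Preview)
Your approach is genuinely different from the paper's, and for $U,V$ it is correct and in fact yields the sharper exponent $\tfrac23$. The paper does not use the lift-and-translate optimization at all. Instead it argues geometrically: from two boundary points $(z_1,0),(z_2,0)$ it draws the $\lambda_+$- and $\lambda_-$-characteristics, which (because ${\rm d}z/{\rm d}t\sim t^2$) meet at a point $(z_m,t_m)$ with $t_m\sim|z_1-z_2|^{1/3}$. Since $|\pa_-U|$ and $|\pa_+V|$ are uniformly bounded (from \eqref{4.9} and \eqref{4.30}), integrating along each characteristic costs $O(t_m)$; the passage from $V$ back to $U$ uses the key fact $U(z,0)=-V(z,0)$ together with $|U+V|\le K^*t$, giving $|U(z_2,0)-U(z_1,0)|\lesssim t_m\sim|z_1-z_2|^{1/3}$. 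General pairs of points are then handled by a two-case split according to whether $t_1\ge z_2-z_1$ (where the paper also invokes $|U_z|\lesssim t^{-1/2}$) or not. Your anisotropic interpolation replaces this case analysis and the $U\leftrightarrow -V$ swap by a single optimization, at the cost of needing the vertical segment $\{z\}\times[0,\sigma]$ to lie in $A'B'C'$; since the lateral boundaries of $A'B'C'$ are themselves $\lambda_\pm$-characteristics with slope $O(t^2)$, this holds for $\sigma$ small, so the objection you flag is not serious.

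There is, however, a genuine slip in your treatment of $W$. You claim $|\widetilde W_z|\le Ct^{-1/2}$, but equation \eqref{4.29} only controls $\widetilde W_t$; from $\pa_\pm\widetilde W=(F_1-F_2)+t\widetilde R$ (respectively $+t\widetilde S$) one gets $\pa_+\widetilde W-\pa_-\widetilde W=t(\widetilde R-\widetilde S)$ and hence $\widetilde W_z=(\widetilde R-\widetilde S)/\bigl(c(1+O(t^2))\bigr)\cdot t^{-1}$, which is $O(t^{-3/2})$, not $O(t^{-1/2})$. Your optimization with $\sigma^{-3/2}$ in place of $\sigma^{-1/2}$ still yields $C^{2/5}\subset C^{1/3}$, so the conclusion survives, but the stated bound and the final paragraph's remark that ``the exponent $\tfrac13$ (rather than $\tfrac12$)'' comes from the $O(t^2)$ characteristic slope are both off: in your argument the exponent comes purely from balancing $\sigma$ against $\sigma^{-\alpha}|\Delta z|$, while in the paper's argument the $t^2$ slope is what produces the $\tfrac13$ directly via $t_m^3\sim|\Delta z|$.
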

\begin{proof}
Suppose that $(z_1,0)$ and $(z_2,0)$ $(z_1<z_2)$ are any two points on the degenerate curve $\widehat{A'C'}$. Let $(z_m,t_m)$ be the intersection point of the $\lambda_+$-characteristic $z^+$ and $\lambda_-$-characteristic $z^-$ starting from $(z_1,0)$ and $(z_2,0)$ respectively. Recalling the eigenvalues of \eqref{4.9} yields
\begin{align*}
\left\{
 \begin{array}{l}
  \fr{{\rm d}}{{\rm d}t}z^-=-\fr{\sqrt{1-t^2}V}{F(V-H_0G)}t^2, \\
  z^-|_{t=0}=z_2,
 \end{array}
\right.\quad
\left\{
 \begin{array}{l}
  \fr{{\rm d}}{{\rm d}t}z^+=\fr{\sqrt{1-t^2}U}{F(U+H_0G)}t^2, \\
  z^+|_{t=0}=z_1,
 \end{array}
\right.
\end{align*}
from which we get, using \eqref{3.13}
\begin{align}\label{4.31}
\underline{K}t_m\leq|z_m-z_i|^{\fr{1}{3}}\leq\overline{K}t_m,\quad i=1,2,
\end{align}
for some positive constants $\underline{K}$ and $\overline{K}$. In view of the uniform boundedness of $|\pa_-U|$ and $|\pa_+V|$, we integrate $\pa_+V$ from $(z_1,0)$ to $(z_m,t_m)$ and $\pa_-U$ from $(z_2,0)$ to $(z_m,t_m)$ and employ \eqref{4.31} to acquire
\begin{align*}
|V(z_1,0)-V(z_m,t_m)|\leq K_1t_m, \quad |U(z_2,0)-U(z_m,t_m)|\leq K_1t_m
\end{align*}
for some uniform constant $K_1$. Thus we use the fact $|(U+V)(z,t)|\leq K^*t$ and \eqref{4.31} to obtain
\begin{align*}
&|U(z_2,0)-U(z_1,0)|=|U(z_2,0)+V(z_1,0)| \\
\leq \ &|U(z_2,0)-U(z_m,t_m)|+|U(z_m,t_m)+V(z_m,t_m)| +|V(z_m,t_m)-V(z_1,0)| \\
\leq \ &(2K_1+K^*)t_m\leq K_2|z_2-z_1|^{\fr{1}{3}}=K_2|(z_2,0)-(z_1,0)|^{\fr{1}{3}},
\end{align*}
where $K_2=(2K_1+K^*)/\underline{K}$ is a uniformly constant.

For any two points $(z_1,t_1)$ and $(z_2,t_2)$ $(z_1\leq z_2, 0\leq t_1\leq t_2)$ in the region $A'B'C'$,  one arrives at
\begin{align*}
|U(z_2,t_2)-U(z_1,t_1)|=&|U(z_1,t_2)-U(z_1,t_1)| \\
\leq& \max_{A'B'C'}|U_t|\cdot|t_2-t_1|=\max_{A'B'C'}|U_t|\cdot|(z_2,t_2)-(z_1,t_1)|.
\end{align*}
 if $z_1=z_2$, by using the uniform boundedness of $U_t$.

If $z_1<z_2$, the analysis consists of  two cases:

\noindent Case I. $t_1\geq(z_2-z_1)$. For this case, we note the relation
$$
U_z=-\fr{F(U+H_0G)^2\widetilde{R}}{\sqrt{1-t^2}[2-H_0G(\widetilde{U}+\widetilde{V})]}
$$
derived from \eqref{4.11} and \eqref{4.22}. Then we take $\nu=1/2$ in \eqref{4.28} to get
\begin{align*}
|U(z_2,t_2)-U(z_1,t_1)|&\leq |U(z_2,t_2)-U(z_2,t_1)|+|U(z_2,t_1)-U(z_1,t_1)| \\
&\leq \max_{A'B'C'}|U_t|\cdot|t_2-t_1|+|U_z|\cdot|z_2-z_1| \\
&\leq\max_{A'B'C'}|U_t|\cdot|t_2-t_1|+Kt_{1}^{-\fr{1}{2}}\cdot|z_2-z_1|
\\
&\leq\max_{A'B'C'}|U_t|\cdot|t_2-t_1|+K\cdot|z_2-z_1|^{\fr{1}{2}}\leq K|(z_2,t_2)-(z_1,t_1)|^{\fr{1}{3}}
\end{align*}
for some uniformly constant $K$.

\noindent Case II. $t_1<(z_2-z_1)$. There exists a uniform constant $K>0$ such that
\begin{align*}
&|U(z_2,t_2)-U(z_1,t_1)|\\
\leq& |U(z_2,t_2)-U(z_2,t_1)|+|U(z_2,t_1)-U(z_2,0)|+|U(z_2,0)-U(z_1,0)|+|U(z_1,0)-U(z_1,t_1)| \\
\leq & \max_{A'B'C'}|U_t|\cdot|t_2-t_1|+\max_{A'B'C'}|U_t|\cdot t_1+K_2|z_2-z_1|^{\fr{1}{3}}+\max_{A'B'C'}|U_t|\cdot t_1
\\
\leq & 2\max_{A'B'C'}|U_t|(|t_2-t_1|+|z_2-z_1|)+K_2|z_2-z_1|^{\fr{1}{3}} \leq K|(z_2,t_2)-(z_1,t_1)|^{\fr{1}{3}}.
\end{align*}
Therefore, the function $U(z,t)$ is uniformly $C^{\fr{1}{3}}$ continuous in the whole domain $A'B'C'$.
Analogously, we can show the uniformly $C^{\fr{1}{3}}$ continuity of $V$ and $W$.
\end{proof}

\subsection{Regularity in the  original physical plane}

In this subsection, we establish the uniform regularity of solutions and the regularity of the sonic curve $\widehat{AC}$ in the $(x,y)$ plane to complete the proof of Theorem \ref{thm1}. The analysis is divided into
three steps.

\textbf{Step I.} This step is devoted to verifying that the map $(x,y)\mapsto(z,t)$ is an one-to-one mapping. We use the contradiction argument. If the map is not one-to-one, one assumes $(\hat{x},\hat{y})$ and $(\tilde{x},\tilde{y})$ are two distinct points in the domain $ABC$ such that $\cos\omega(\hat{x},\hat{y})=\cos\omega(\tilde{x},\tilde{y})$ and $\theta(\hat{x},\hat{y})=\theta(\tilde{x},\tilde{y})$. Thus the two points $(\hat{x},\hat{y})$ and $(\tilde{x},\tilde{y})$ are on a level curve $\ell^\eps(x,y)=1-\sin\omega(x,y)=\eps\geq0$. On the other hand, we claim that the function $\theta$ is strictly monotonic along each level curve $\ell^\eps(x,y)=\eps\geq0$. In fact, we directly compute by \eqref{3.18a} and \eqref{4.1}-\eqref{4.2}
\begin{align*}
&(\theta_x, \theta_y)\cdot(\ell^{\eps}_y, -\ell^{\eps}_x) \\
=&(\kappa+\sin^2\omega) \bigg\{-(\sin\beta\bar\pa^+\Xi+\sin\alpha\bar\pa^-\Xi)[\cos\theta(\bar\pa^+\Xi-\bar\pa^-\Xi+2H_0G) +\sin\theta\sin\omega W] \\
&\qquad \qquad \qquad \ \ +(\cos\beta\bar\pa^+\Xi+\cos\alpha\bar\pa^-\Xi)[\sin\theta(\bar\pa^+\Xi-\bar\pa^-\Xi+2H_0G) -\cos\theta\sin\omega W]\bigg\} \\[3pt]
=&(\kappa+\sin^2\omega)\bigg\{(\bar\pa^+\Xi-\bar\pa^-\Xi+2H_0G)\sin\omega(\bar\pa^+\Xi-\bar\pa^-\Xi) -\sin\omega\cos\omega W(\bar\pa^+\Xi+\bar\pa^-\Xi)\bigg\} \\
=&\sin\omega(\kappa+\sin^2\omega)[2H_0G(\bar\pa^+\Xi-\bar\pa^-\Xi)-4\bar\pa^+\Xi\bar\pa^-\Xi]>0.
\end{align*}
Hence we get a contradiction with the assumption $\theta(\hat{x},\hat{y})=\theta(\tilde{x},\tilde{y})$.

\textbf{Step II.} We assert that the function $\omega(x,y)$ is uniformly $C^{\fr{1}{2}}$-continuous in the whole domain $ABC$, containing the sonic boundary $\widehat{AC}$. To prove the assertion, we rewrite \eqref{2.7} as
\begin{align*}
\pa_x=\fr{\cos\theta}{2}\cdot\fr{\bar\pa^++\bar\pa^-}{\cos\omega} -\fr{\sin\theta}{2\sin\omega}(\bar\pa^+-\bar\pa^-), \quad \pa_y=\fr{\sin\theta}{2}\cdot\fr{\bar\pa^++\bar\pa^-}{\cos\omega} +\fr{\cos\theta}{2\sin\omega}(\bar\pa^+-\bar\pa^-),
\end{align*}
which combined with \eqref{2.14a} yields
\begin{align}\label{4.32}
\begin{array}{l}
\cos\omega\pa_x\omega=\sin\omega(\kappa+\sin^2\omega)[\cos\theta W-\sin\theta(\bar\pa^+\Xi-\bar\pa^-\Xi+2H_0G)], \\
\cos\omega\pa_y\omega=\sin\omega(\kappa+\sin^2\omega)[\sin\theta W+\cos\theta(\bar\pa^+\Xi-\bar\pa^-\Xi+2H_0G)].
\end{array}
\end{align}
These, together with the uniform boundedness of $W$, imply
\begin{align}\label{4.33}
|\cos\omega\omega_x|+|\cos\omega\omega_y|\leq K
\end{align}
for a uniform positive constant $K$. Thus one gets
$$
|(\fr{\pi}{2}-\omega)\omega_x|+ |(\fr{\pi}{2}-\omega)\omega_y|\leq\fr{\fr{\pi}{2}-\omega}{\sin(\fr{\pi}{2}-\omega)}K\leq 2K,
$$
which indicates that the function $(\pi/2-\omega)^2$ is uniformly Lipschitz continuous in terms of $(x,y)$.  That is, for any two points $(x',y')$ and $(x'',y'')$ in the whole domain $ABC$, there holds
\begin{align}\label{4.34}
\bigg|\bigg(\fr{\pi}{2}-\omega(x',y')\bigg)^2 -\bigg(\fr{\pi}{2}-\omega(x'',y'')\bigg)^2\bigg|\leq2K|(x',y')-(x'',y'')|.
\end{align}
Moreover, we use the fact $(\pi/2-\omega)\geq0$ to arrive at
\begin{align*}
|\omega(x',y')-\omega(x'',y'')|^2 =&\bigg|\bigg(\fr{\pi}{2}-\omega(x',y')\bigg)-\bigg(\fr{\pi}{2}-\omega(x'',y'')\bigg)\bigg|^2 \\[5pt]
\leq &\bigg|\bigg(\fr{\pi}{2}-\omega(x',y')\bigg)^2-\bigg(\fr{\pi}{2}-\omega(x'',y'')\bigg)^2\bigg|,
\end{align*}
which along with \eqref{4.34} leads to
\begin{align}\label{4.35}
|\omega(x',y')-\omega(x'',y'')|\leq\sqrt{2K}|(x',y')-(x'',y'')|^\fr{1}{2}.
\end{align}
Thus $\omega(x,y)$ is uniformly $C^{\fr{1}{2}}$-continuous in the whole domain $ABC$.

\textbf{Step III.} We claim that, for any function $\phi(z,t)\in C^{\fr{1}{3}}$ defined in the whole domain $A'B'C'$, the function $\widetilde{\phi}(x,y):=\phi(z,t)$ is uniformly $C^{\fr{1}{6}}$-continuous in the whole domain $ABC$ in terms of $(x,y)$. Let $(x',y')$ and $(x'',y'')$ be any two points in $ABC$ and $(z',t')$ and $(z'',t'')$ be two points in $A'B'C'$ such that $t'=\cos\omega(x',y')$, $z'=\theta(x',y')$ and $t''=\cos\omega(x'',y'')$, $z''=\theta(x'',y'')$. Due to assumption $\phi(z,t)\in C^{\fr{1}{3}}$, we have
\begin{align}\label{4.36}
&|\widetilde{\phi}(x'',y'')-\widetilde{\phi}(x',y')|=|\phi(z'',t'')-\phi(z',t')|\leq K|(z'',t'')-(z',t')|^{\fr{1}{3}} \nonumber \\
=&K\bigg\{\bigg(\cos\omega(x'',y'')-\cos\omega(x',y')\bigg)^2 +\bigg(\theta(x'',y'')-\theta(x',y')\bigg)^2\bigg\}^{\fr{1}{6}}
\end{align}
for some uniform constant $K$. Furthermore, making use of \eqref{4.35} sees
\begin{align*}
|\cos\omega(x'',y'')-\cos\omega(x',y')| &\leq\bigg|2\sin\fr{\omega(x'',y'')-\omega(x',y')}{2}\bigg| \\
&
\leq|\omega(x'',y'')-\omega(x',y')|\leq \sqrt{2K}|(x'',y'')-(x',y')|^{\fr{1}{2}}.
\end{align*}
We put the above into \eqref{4.1} and apply \eqref{3.19} to obtain
$$
|\widetilde{\phi}(x'',y'')-\widetilde{\phi}(x',y')|\leq K|(x'',y'')-(x',y')|^{\fr{1}{6}},
$$
which means that $\widetilde{\phi}(x,y)$ is uniformly $C^{\fr{1}{6}}$-continuous in the whole domain $ABC$.

Combining with Lemma \ref{lem16} and the above result, we achieve that the functions $\bar\pa^+\Xi(x, y)$,  $\bar\pa^-\Xi(x, y)$ and $W(x,y)$ are uniformly $C^{\fr{1}{6}}$-continuous in terms of $(x, y)$ in the whole domain $ABC$. Then we recall \eqref{3.18a} and \eqref{4.32} that $\theta(x,y)$ and $\sin\omega(x,y)$ are uniformly $C^{1,\fr{1}{6}}$ in the whole domain $ABC$. To complete the proof of Theorem \ref{thm1}, it remains to show the $C^{1,\fr{1}{6}}$-continuity of the sonic curve $\widehat{AC}$. From the definitions \eqref{4.1} and \eqref{4.2}, the functions $\ell^{\eps}_x(x,y)$ and $\ell^{\eps}_y(x,y)$ are uniformly $C^{\fr{1}{6}}$-continuous in the whole domain $ABC$. In addition, we combine with \eqref{3.13}, \eqref{4.4} and \eqref{4.30} to get
$$
0<\tilde{m}\leq[\ell^{\eps}_x(x,y)]^2+[\ell^{\eps}_y(x,y)]^2\leq \widetilde{M}
$$
for some uniform constants $\tilde{m}$ and $\widetilde{M}$. Thus the sonic curve $\widehat{AC}$ is $C^{1,\fr{1}{6}}$ continuous and the proof of Theorem \ref{thm1} is completed.

\section*{Acknowledgements}

The first author is supported by NSF of Zhejiang Province (no. LY17A010019). The second author is supported by NSFC (nos. 11771054, 91852207) and Foundation of LCP.


\end{document}